\def\RSsubtxt{section~}\newref{sub}{name = \RSsubtxt}}
\def\RSthmtxt{theorem~}\newref{thm}{name = \RSthmtxt}}
\def\RSlemtxt{lemma~}\newref{lem}{name = \RSlemtxt}}
\numberwithin{equation}{section}
\numberwithin{figure}{section}
\numberwithin{table}{section}
\theoremstyle{plain}
\newtheorem{thm}{\protect\theoremname}[section]
  \theoremstyle{definition}
  \newtheorem{defn}[thm]{\protect\definitionname}
  \theoremstyle{plain}
  \newtheorem{lem}[thm]{\protect\lemmaname}
  \theoremstyle{remark}
  \newtheorem{rem}[thm]{\protect\remarkname}
  \theoremstyle{definition}
  \newtheorem{example}[thm]{\protect\examplename}
  \theoremstyle{remark}
  \newtheorem*{notation*}{\protect\notationname}
  \theoremstyle{plain}
  \newtheorem{prop}[thm]{\protect\propositionname}
  \theoremstyle{plain}
  \newtheorem{cor}[thm]{\protect\corollaryname}
 \newlist{casenv}{enumerate}{4}
 \setlist[casenv]{leftmargin=*,align=left,widest={iiii}}
 \setlist[casenv,1]{label={{\itshape\ \casename} \arabic*.},ref=\arabic*}
 \setlist[casenv,2]{label={{\itshape\ \casename} \roman*.},ref=\roman*}
 \setlist[casenv,3]{label={{\itshape\ \casename\ \alph*.}},ref=\alph*}
 \setlist[casenv,4]{label={{\itshape\ \casename} \arabic*.},ref=\arabic*}
  \theoremstyle{remark}
  \newtheorem*{acknowledgement*}{\protect\acknowledgementname}
  \providecommand{\acknowledgementname}{Acknowledgement}
  \providecommand{\corollaryname}{Corollary}
  \providecommand{\definitionname}{Definition}
  \providecommand{\examplename}{Example}
  \providecommand{\lemmaname}{Lemma}
  \providecommand{\notationname}{Notation}
  \providecommand{\propositionname}{Proposition}
  \providecommand{\remarkname}{Remark}
 \providecommand{\casename}{Case}
\providecommand{\theoremname}{Theorem}
\begin{document}

\title{Representations of the canonical commutation relations--algebra and
the operators of stochastic calculus}

\author{Palle Jorgensen and Feng Tian}

\address{(Palle E.T. Jorgensen) Department of Mathematic s, The University
of Iowa, Iowa City, IA 52242-1419, U.S.A. }

\email{palle-jorgensen@uiowa.edu}

\urladdr{http://www.math.uiowa.edu/\textasciitilde{}jorgen/}

\address{(Feng Tian) Department of Mathematics, Hampton University, Hampton,
VA 23668, U.S.A.}

\email{feng.tian@hamptonu.edu}

\subjclass[2000]{Primary 81S20, 81S40, 60H07, 47L60, 46N30, 65R10, 58J65, 81S25.}

\keywords{Canonical commutation relations, representations, unbounded operators,
closable operator, unbounded derivations, spectral theory, duality,
Gaussian fields, probability space, stochastic processes, discrete
time, path-space measure, stochastic calculus.}

\maketitle
\pagestyle{myheadings}
\markright{}
\begin{abstract}
We study a family of representations of the canonical commutation
relations (CCR)-algebra (an infinite number of degrees of freedom),
which we call admissible. The family of admissible representations
includes the Fock-vacuum representation. We show that, to every admissible
representation, there is an associated Gaussian stochastic calculus,
and we point out that the case of the Fock-vacuum CCR-representation
in a natural way yields the operators of Malliavin calculus. And we
thus get the operators of Malliavin's calculus of variation from a
more algebraic approach than is common. And we obtain explicit and
natural formulas, and rules, for the operators of stochastic calculus.
Our approach makes use of a notion of symmetric (closable) pairs of
operators. The Fock-vacuum representation yields a maximal symmetric
pair. This duality viewpoint has the further advantage that issues
with unbounded operators and dense domains can be resolved much easier
than what is possible with alternative tools. With the use of CCR
representation theory, we also obtain, as a byproduct, a number of
new results in multi-variable operator theory which we feel are of
independent interest.
\end{abstract}

\tableofcontents{}

\section{Introduction}

Both the study of quantum fields, and of quantum statistical mechanics,
entails families of representations of the canonical commutation relations
(CCRs). In the case of an infinite number of degrees of freedom, it
is known that we have existence of many inequivalent representations
of the CCRs. Among the representations, some describe such things
as a nonrelativistic infinite free Bose gas of uniform density. But
the representations of the CCRs play an equally important role in
the kind of infinite-dimensional analysis currently used in a calculus
of variation approach to Gaussian fields, It\=o integrals, including
the Malliavin calculus. In the literature, the infinite-dimensional
stochastic operators of derivatives and stochastic integrals are usually
taken as the starting point, and the representations of the CCRs are
an afterthought. Here we turn the tables. As a consequence of this,
we are able to obtain a number of explicit results in an associated
multi-variable spectral theory. Some of the issues involved are subtle
because the operators in the representations under consideration are
unbounded (by necessity), and, as a result, one must deal with delicate
issues of domains of families of operators and their extensions.

The representations we study result from the Gelfand-Naimark-Segal
construction (GNS) applied to certain states on the CCR-algebra. Our
conclusions and main results regarding this family of CCR representations
(details below, especially sects \ref{sec:CCR} and \ref{sec:con})
hold in the general setting of Gaussian fields. But for the benefit
of readers, we have also included an illustration dealing with the
simplest case, that of the standard Brownian/Wiener process. Many
arguments in the special case carry over to general Gaussian fields
\emph{mutatis mutandis}. In the Brownian case, our initial Hilbert
space will be $\mathscr{L}=L^{2}\left(0,\infty\right)$.

From the initial Hilbert space $\mathscr{L}$, we build the $*$-algebra
$\mbox{CCR}\left(\mathscr{L}\right)$ as in \subref{ccr}. We will
show that the Fock state on $\mbox{CCR}\left(\mathscr{L}\right)$
corresponds to the Wiener measure $\mathbb{P}$. Moreover the corresponding
representation $\pi$ of $\mbox{CCR}\left(\mathscr{L}\right)$ will
be acting on the Hilbert space $L^{2}\left(\Omega,\mathbb{P}\right)$
in such a way that for every $k$ in $\mathscr{L}$, the operator
$\pi(a(k))$ is the Malliavin derivative in the direction of $k$.
We caution that the representations of the $*$-algebra $\mbox{CCR}\left(\mathscr{L}\right)$
are by unbounded operators, but the operators in the range of the
representations will be defined on a single common dense domain.

Example: There are two ways to think of systems of generators for
the CCR-algebra over a fixed infinite-dimensional Hilbert space (``CCR\textquotedblright{}
is short for canonical commutation relations.): 
\begin{enumerate}
\item[(i)] an infinite-dimensional Lie algebra, or 
\item[(ii)] an associative $*$-algebra. 
\end{enumerate}
With this in mind, (ii) will simply be the universal enveloping algebra
of (i); see \cite{MR0498740}. While there is also an infinite-dimensional
\textquotedblleft Lie\textquotedblright{} group corresponding to (i),
so far, we have not found it as useful as the Lie algebra itself.

All this, and related ideas, supply us with tools for an infinite-dimensional
stochastic calculus. It fits in with what is called Malliavin calculus,
but our present approach is different, and more natural from our point
of view; and as corollaries, we obtain new and explicit results in
multi-variable spectral theory which we feel are of independent interest.

There is one particular representation of the CCR version of (i) and
(ii) which is especially useful for stochastic calculus. In the present
paper, we call this representation the Fock vacuum-state representation.
One way of realizing the representations is abstract: Begin with the
Fock vacuum state (or any other state), and then pass to the corresponding
GNS representation. The other way is to realize the representation
with the use of a choice of a Wiener $L^{2}$-space. We prove that
these two realizations are unitarily equivalent. 

By stochastic calculus we mean stochastic derivatives (e.g., Malliavin
derivatives), and integrals (e.g., It\=o-integrals). The paper begins
with the task of realizing a certain stochastic derivative operator
as a closable operator acting between two Hilbert spaces.

\section{Unbounded operators and the CCR-algebra}

\subsection{Unbounded operators between different Hilbert spaces}

While the theory of unbounded operators has been focused on spectral
theory where it is then natural to consider the setting of linear
\emph{endomorphisms} with dense domain in a fixed Hilbert space; many
applications entail operators between distinct Hilbert spaces, say
$\mathscr{H}_{1}$ and $\mathscr{H}_{2}$. Typically the facts given
about the two differ greatly from one Hilbert space to the next.

Let $\mathscr{H}_{i}$, $i=1,2$, be two complex Hilbert spaces. The
respective inner products will be written $\left\langle \cdot,\cdot\right\rangle _{i}$,
with the subscript to identify the Hilbert space in question. 
\begin{defn}
A linear operator $T$ from $\mathscr{H}_{1}$ to $\mathscr{H}_{2}$
is a pair $\mathscr{D}\subset\mathscr{H}_{1}$, $T$, where $\mathscr{D}$
is a linear subspace in $\mathscr{H}_{1}$, and $T\varphi\in\mathscr{H}_{2}$
is well-defined for all $\varphi\in\mathscr{D}$. 
\end{defn}
We say that $\mathscr{D}=dom\left(T\right)$ is the domain of $T$,
and 
\begin{equation}
\mathscr{G}\left(T\right)=\left\{ \begin{pmatrix}\varphi\\
T\varphi
\end{pmatrix}\:;\:\varphi\in\mathscr{D}\right\} \subset\begin{pmatrix}\underset{\oplus}{\mathscr{H}_{1}}\\
\mathscr{H}_{2}
\end{pmatrix}\label{eq:L2}
\end{equation}
is the graph. 

If the closure $\overline{\mathscr{G}\left(T\right)}$ is the graph
of a linear operator, we say that $T$ is \emph{closable. }By closure,
we shall refer to closure in the norm of $\mathscr{H}_{1}\oplus\mathscr{H}_{2}$,
i.e., 
\begin{equation}
\left\Vert \begin{pmatrix}h_{1}\\
h_{2}
\end{pmatrix}\right\Vert ^{2}=\left\Vert h_{1}\right\Vert _{1}^{2}+\left\Vert h_{2}\right\Vert _{2}^{2},\quad h_{i}\in\mathscr{H}_{i}.\label{eq:L3}
\end{equation}

If $dom\left(T\right)$ is dense in $\mathscr{H}_{1}$, we say that
$T$ is densely defined. 
\begin{defn}
\label{def:adj}Let $\mathscr{H}_{1}\xrightarrow{\;T\;}\mathscr{H}_{2}$
be a densely defined operator, and consider the subspace $dom\left(T^{*}\right)\subset\mathscr{H}_{2}$
defined as follows:
\begin{align}
dom\left(T^{*}\right) & =\Big\{ h_{2}\in\mathscr{H}_{2}\:;\:\exists C=C_{h_{2}}<\infty\:\mbox{s.t. }\nonumber \\
 & \qquad\left|\left\langle T\varphi,h_{2}\right\rangle _{2}\right|\leq C\left\Vert \varphi\right\Vert _{1},\;\forall\varphi\in dom\left(T\right)\Big\}\label{eq:L4}
\end{align}
Then, by Riesz' theorem, there is a unique $h_{1}\in\mathscr{H}_{1}$
s.t. 
\begin{equation}
\left\langle T\varphi,h_{2}\right\rangle _{2}=\left\langle \varphi,h_{1}\right\rangle _{1},\;\mbox{and}\label{eq:L5}
\end{equation}
we set $T^{*}h_{2}=h_{1}$. \end{defn}
\begin{lem}
\label{lem:Tadjoint}Given a densely defined operator $\mathscr{H}_{1}\xrightarrow{\;T\;}\mathscr{H}_{2}$,
then $T$ is closable if and only if $dom\left(T^{*}\right)$ is dense
in $\mathscr{H}_{2}$. \end{lem}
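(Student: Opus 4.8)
The plan is to translate both sides of the asserted equivalence into statements about closed subspaces of the orthogonal-sum Hilbert space $\mathscr{H}_{1}\oplus\mathscr{H}_{2}$ (with the norm in \eqref{eq:L3}), and then to connect them through von Neumann's classical device of a ``flip'' unitary.

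First I would record the graph criterion for closability. A closed subspace $\mathscr{M}\subset\mathscr{H}_{1}\oplus\mathscr{H}_{2}$ is the graph of a single-valued linear operator if and only if it contains no vector of the form $\begin{pmatrix}0\\h_{2}\end{pmatrix}$ with $h_{2}\neq0$: such a vector, together with $\begin{pmatrix}0\\0\end{pmatrix}$, would force two distinct outputs over the input $0$, while conversely the absence of such vectors makes the projection onto the first coordinate injective on $\mathscr{M}$, so that $\mathscr{M}$ is a graph (linearity being inherited from $\mathscr{M}$). Applying this to $\mathscr{M}=\overline{\mathscr{G}\left(T\right)}$, the operator $T$ is closable precisely when
\[
\Big\{h_{2}\in\mathscr{H}_{2}\;:\;\begin{pmatrix}0\\h_{2}\end{pmatrix}\in\overline{\mathscr{G}\left(T\right)}\Big\}=\{0\}.
\]

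Next I would introduce the unitary $W:\mathscr{H}_{2}\oplus\mathscr{H}_{1}\to\mathscr{H}_{1}\oplus\mathscr{H}_{2}$ given by $W\begin{pmatrix}k\\l\end{pmatrix}=\begin{pmatrix}l\\-k\end{pmatrix}$, which is visibly norm-preserving and bijective. A direct computation with the inner product of $\mathscr{H}_{1}\oplus\mathscr{H}_{2}$ shows, for $\varphi\in dom\left(T\right)$, that
\[
\left\langle \begin{pmatrix}\varphi\\T\varphi\end{pmatrix},\begin{pmatrix}l\\-k\end{pmatrix}\right\rangle =\left\langle \varphi,l\right\rangle _{1}-\left\langle T\varphi,k\right\rangle _{2}.
\]
Comparing with \defref{adj}, the pair $\begin{pmatrix}k\\l\end{pmatrix}$ lies in $\mathscr{G}\left(T^{*}\right)$ exactly when $W\begin{pmatrix}k\\l\end{pmatrix}$ is orthogonal to every element of $\mathscr{G}\left(T\right)$; that is, $\mathscr{G}\left(T^{*}\right)=W^{-1}\big(\mathscr{G}\left(T\right)^{\perp}\big)$, equivalently $\mathscr{G}\left(T\right)^{\perp}=W\big(\mathscr{G}\left(T^{*}\right)\big)$. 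Since $W$ is unitary and orthogonal complements are closed, this incidentally re-proves that $\mathscr{G}\left(T^{*}\right)$ is automatically closed.

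Finally I would combine the two pieces. Using $\overline{\mathscr{G}\left(T\right)}=\mathscr{G}\left(T\right)^{\perp\perp}=\big(W\,\mathscr{G}\left(T^{*}\right)\big)^{\perp}$ and evaluating the inner product of $\begin{pmatrix}0\\h_{2}\end{pmatrix}$ against a generic $W\begin{pmatrix}k\\l\end{pmatrix}=\begin{pmatrix}l\\-k\end{pmatrix}$, one obtains $-\left\langle h_{2},k\right\rangle _{2}$, so that $\begin{pmatrix}0\\h_{2}\end{pmatrix}\in\overline{\mathscr{G}\left(T\right)}$ if and only if $h_{2}\perp dom\left(T^{*}\right)$. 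Hence the ``bad fiber'' isolated in the first step equals $dom\left(T^{*}\right)^{\perp}$, and the closability criterion becomes $dom\left(T^{*}\right)^{\perp}=\{0\}$, i.e.\ $dom\left(T^{*}\right)$ dense in $\mathscr{H}_{2}$, which is the claim. The only point demanding genuine care — and the main obstacle to a clean write-up — is the bookkeeping of which Hilbert sum each object inhabits, since here $\mathscr{G}\left(T\right)\subset\mathscr{H}_{1}\oplus\mathscr{H}_{2}$ while $\mathscr{G}\left(T^{*}\right)\subset\mathscr{H}_{2}\oplus\mathscr{H}_{1}$ and these two spaces are distinct; keeping the unitary $W$ explicit throughout is precisely what makes the orthogonality computations unambiguous.
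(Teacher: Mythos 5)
Your argument is correct and complete: it is the standard von Neumann graph/flip-unitary proof, with the identity $\mathscr{G}\left(T\right)^{\perp}=W\big(\mathscr{G}\left(T^{*}\right)\big)$ doing exactly the work needed to identify the ``bad fiber'' $\big\{h_{2}:\begin{pmatrix}0\\h_{2}\end{pmatrix}\in\overline{\mathscr{G}\left(T\right)}\big\}$ with $dom\left(T^{*}\right)^{\perp}$. The paper itself gives no argument here --- it simply cites \cite{MR1009163} --- and what you have written is precisely the proof that reference supplies, so there is nothing to compare beyond noting that your version is self-contained where the paper defers to the literature.
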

\begin{proof}
See \cite{MR1009163}.\end{proof}
\begin{rem}[Notation and Facts]
~
\begin{enumerate}
\item The abbreviated notation $\xymatrix{\mathscr{H}_{1}\ar@/^{0.5pc}/[r]^{T} & \mathscr{H}_{2}\ar@/^{0.5pc}/[l]^{T^{*}}}
$ will be used when the domains of $T$ and $T^{*}$ are understood
from the context.
\item Let $T$ be an operator $\mathscr{H}_{1}\xrightarrow{\;T\;}\mathscr{H}_{2}$
and $\mathscr{H}_{i}$, $i=1,2$, two given Hilbert spaces. Assume
$\mathscr{D}:=dom\left(T\right)$ is dense in $\mathscr{H}_{1}$,
and that $T$ is \emph{closable}. Then there is a unique \emph{closed}
operator, denoted $\overline{T}$ such that 
\begin{equation}
\mathscr{G}\left(\overline{T}\right)=\overline{\mathscr{G}\left(T\right)}\label{eq:L9}
\end{equation}
where ``---'' on the RHS in (\ref{eq:L9}) refers to norm closure
in $\mathscr{H}_{1}\oplus\mathscr{H}_{2}$, see (\ref{eq:L3}).
\item It may happen that $dom\left(T^{*}\right)=0$. See \exaref{Tadj}
below.
\end{enumerate}
\end{rem}
\begin{example}
\label{exa:Tadj}An operator $T:\mathscr{H}_{1}\longrightarrow\mathscr{H}_{2}$
with dense domain s.t. $dom\left(T^{*}\right)=0$, i.e., ``extremely''
non-closable. 

Set $\mathscr{H}_{i}=L^{2}\left(\mu_{i}\right)$, $i=1,2$, where
$\mu_{1}$ and $\mu_{2}$ are two mutually singular measures on a
fixed locally compact measurable space, say $X$. The space $\mathscr{D}:=C_{c}\left(X\right)$
is dense in both $\mathscr{H}_{1}$ and in $\mathscr{H}_{2}$ with
respect to the two $L^{2}$-norms. Then, the identity mapping $T\varphi=\varphi$,
$\forall\varphi\in\mathscr{D}$, becomes a Hilbert space operator
$\mathscr{H}_{1}\xrightarrow{\;T\;}\mathscr{H}_{2}$. 

Using \defref{adj}, we see that $h_{2}\in L^{2}\left(\mu_{2}\right)$
is in $dom\left(T^{*}\right)$ iff $\exists h_{1}\in L^{2}\left(\mu_{1}\right)$
such that 
\begin{equation}
\int\varphi\,h_{1}\,d\mu_{1}=\int\varphi\,h_{2}\,d\mu_{2},\quad\forall\varphi\in\mathscr{D}.\label{eq:L16}
\end{equation}
Since $\mathscr{D}$ is dense in both $L^{2}$-spaces, we get 
\begin{equation}
\int_{E}h_{1}\,d\mu_{1}=\int_{E}h_{2}\,d\mu_{2},\label{eq:L17}
\end{equation}
where $E=supp\left(\mu_{2}\right)$. 

Now suppose $h_{2}\neq0$ in $L^{2}\left(\mu_{2}\right)$, then there
is a subset $A\subset E$ s.t. $h_{2}>0$ on $A$, $\mu_{2}\left(A\right)>0$,
and $\int_{A}h_{2}\,d\mu_{2}>0$. But $\int_{A}h_{1}\,d\mu_{1}=\int_{A}h_{2}\,d\mu_{2}$,
and $\int_{A}h_{1}\,d\mu_{1}=0$ since $\mu_{1}\left(A\right)=0$.
This contradiction proves that $dom\left(T^{*}\right)=0$; and in
particular $T$ is unbounded and non-closable.\end{example}
\begin{thm}
\label{thm:TPolar}Let $\mathscr{H}_{1}\xrightarrow{\;T\;}\mathscr{H}_{2}$
be a densely defined operator, and assume that $dom\left(T^{*}\right)$
is dense in $\mathscr{H}_{2}$, i.e., $T$ is closable, then both
of the operators $T^{*}\overline{T}$ and $\overline{T}T^{*}$ are
densely defined, and both are selfadjoint. 

Moreover, there is a \uline{partial isometry} $U:\mathscr{H}_{1}\longrightarrow\mathscr{H}_{2}$
with initial space in $\mathscr{H}_{1}$ and final space in $\mathscr{H}_{2}$
such that 
\begin{equation}
T=U\left(T^{*}\overline{T}\right)^{\frac{1}{2}}=\left(\overline{T}T^{*}\right)^{\frac{1}{2}}U.\label{eq:L18}
\end{equation}
(Eq. (\ref{eq:L18}) is called the polar decomposition of $T$.)\end{thm}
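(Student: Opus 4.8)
The plan is to reduce the statement to von~Neumann's theorem on the selfadjointness of $A^{*}A$ for a closed operator $A$, and then to manufacture the partial isometry $U$ from the square root by a norm identity. First I would pass to the closure. Since $T$ is densely defined and closable, the Remark following \lemref{Tadjoint} supplies the closed operator $\overline{T}$ with $\mathscr{G}(\overline{T})=\overline{\mathscr{G}(T)}$; because the adjoint in \defref{adj} depends only on the closed graph, one has $(\overline{T})^{*}=T^{*}$ and, applying \lemref{Tadjoint} to $T^{*}$ (whose domain is dense by hypothesis), $(T^{*})^{*}=\overline{T}$. Thus $T^{*}\overline{T}=(\overline{T})^{*}\overline{T}$ and $\overline{T}T^{*}=(T^{*})^{*}T^{*}$, so it suffices to prove the assertions for the two closed operators $\overline{T}\colon\mathscr{H}_{1}\to\mathscr{H}_{2}$ and $T^{*}\colon\mathscr{H}_{2}\to\mathscr{H}_{1}$.

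For selfadjointness I would run the graph-orthogonality argument in the two-space setting. Let $\Gamma\colon\mathscr{H}_{1}\oplus\mathscr{H}_{2}\to\mathscr{H}_{2}\oplus\mathscr{H}_{1}$ be the unitary
\[
\Gamma\begin{pmatrix}x\\ y\end{pmatrix}=\begin{pmatrix}-y\\ x\end{pmatrix}.
\]
Unwinding \defref{adj} shows $\mathscr{G}(T^{*})=\big(\Gamma\,\mathscr{G}(\overline{T})\big)^{\perp}$ inside $\mathscr{H}_{2}\oplus\mathscr{H}_{1}$; since $\overline{T}$ is closed these two subspaces are closed and furnish the orthogonal decomposition $\mathscr{H}_{2}\oplus\mathscr{H}_{1}=\Gamma\,\mathscr{G}(\overline{T})\oplus\mathscr{G}(T^{*})$. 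Decomposing the vector $\begin{pmatrix}0\\ f\end{pmatrix}$ for arbitrary $f\in\mathscr{H}_{1}$ then yields a unique $u\in dom(\overline{T})$ with $\overline{T}u\in dom(T^{*})$ and $f=(I+T^{*}\overline{T})u$; hence $I+T^{*}\overline{T}$ is a bijection of $dom(T^{*}\overline{T})$ onto $\mathscr{H}_{1}$. Together with symmetry and the estimate $\langle(I+T^{*}\overline{T})x,x\rangle_{1}\ge\|x\|_{1}^{2}$, this forces $T^{*}\overline{T}$ to be selfadjoint and nonnegative. Running the identical argument with $T^{*}$ in place of $\overline{T}$ (legitimate since $(T^{*})^{*}=\overline{T}$) shows $\overline{T}T^{*}$ is selfadjoint and nonnegative on $\mathscr{H}_{2}$.

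With selfadjointness in hand, I would invoke the spectral theorem to form the nonnegative square roots $Q:=(T^{*}\overline{T})^{1/2}$ on $\mathscr{H}_{1}$ and $R:=(\overline{T}T^{*})^{1/2}$ on $\mathscr{H}_{2}$. Evaluating on a core gives $\|Qx\|_{1}^{2}=\langle Q^{2}x,x\rangle_{1}=\langle T^{*}\overline{T}x,x\rangle_{1}=\|\overline{T}x\|_{2}^{2}$, and a closure argument promotes this to $dom(Q)=dom(\overline{T})$ with $\|Qx\|_{1}=\|\overline{T}x\|_{2}$ for all $x\in dom(\overline{T})$. This norm identity lets me define $U$ on $\overline{\mathrm{ran}\,Q}$ by $U(Qx)=\overline{T}x$---well defined and isometric precisely because of the identity---and by $0$ on $(\mathrm{ran}\,Q)^{\perp}=\ker Q=\ker\overline{T}$. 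Then $U$ is the desired partial isometry, with initial space $(\ker\overline{T})^{\perp}\subset\mathscr{H}_{1}$ and final space $\overline{\mathrm{ran}\,\overline{T}}\subset\mathscr{H}_{2}$, and by construction $\overline{T}=UQ$, i.e.\ $T=U(T^{*}\overline{T})^{1/2}$ on $dom(T)$.

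For the remaining factorization I would compute $\overline{T}T^{*}=(UQ)(UQ)^{*}=UQ^{2}U^{*}$ (using $(\overline{T})^{*}=QU^{*}$, valid since $U$ is bounded), check that $UQU^{*}$ is nonnegative selfadjoint with $(UQU^{*})^{2}=\overline{T}T^{*}$, and conclude $R=UQU^{*}$ by uniqueness of the positive square root; then $RU=UQ(U^{*}U)=UQ=\overline{T}$, giving $T=(\overline{T}T^{*})^{1/2}U$ on $dom(T)$. The main obstacle is the selfadjointness step: verifying the orthogonal graph decomposition and extracting surjectivity of $I+T^{*}\overline{T}$ is the technical heart of the proof, and the only extra care the two-Hilbert-space setting demands is bookkeeping about which space each operator, projection, and inner product lives on. Everything after that step is functional calculus and routine domain checks.
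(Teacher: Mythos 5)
The paper offers no proof of \thmref{TPolar} at all---it simply cites the literature (``See, e.g., \cite{MR1009163}'')---so there is nothing in the text to compare your argument against except the implicit standard reference. What you have written is, in substance, the classical von Neumann proof that the cited source contains: the graph-rotation identity $\mathscr{G}(T^{*})=\bigl(\Gamma\,\mathscr{G}(\overline{T})\bigr)^{\perp}$, the orthogonal decomposition of $\begin{pmatrix}0\\ f\end{pmatrix}$ to get surjectivity of $I+T^{*}\overline{T}$, selfadjointness from symmetry plus bijectivity, and the partial isometry from the norm identity $\|Qx\|_{1}=\|\overline{T}x\|_{2}$. The outline is correct, and the two-Hilbert-space bookkeeping is handled properly. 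Two steps deserve to be made explicit rather than waved at: first, the density of $dom(T^{*}\overline{T})$ (asserted in the theorem) is not automatic from bijectivity of $I+T^{*}\overline{T}$; one gets it by observing that $(I+T^{*}\overline{T})^{-1}$ is an everywhere-defined, bounded, injective, selfadjoint operator, so its range $dom(T^{*}\overline{T})$ is dense. Second, your ``closure argument'' for $dom(Q)=dom(\overline{T})$ with $\|Qx\|_{1}=\|\overline{T}x\|_{2}$ requires knowing that $dom(T^{*}\overline{T})$ is a core not only for $Q$ (which the spectral theorem gives) but also for $\overline{T}$; this is the one genuinely nontrivial lemma in von Neumann's theorem and is usually proved by showing $\overline{T}(I+T^{*}\overline{T})^{-1}$ is bounded and approximating a general $x\in dom(\overline{T})$ by $n(n+T^{*}\overline{T})^{-1}x$. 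With those two points filled in, your proof is complete and matches the standard treatment the authors delegate to the reference.
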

\begin{proof}
See, e.g., \cite{MR1009163}.
\end{proof}

\subsection{\label{sub:ccr}The CCR-algebra, and the Fock representations}

There are two $*$-algebras built functorially from a fixed (single)
Hilbert space $\mathscr{L}$; often called the one-particle Hilbert
space (in physics). The dimension $\dim\mathscr{L}$ is called \emph{the
number of degrees of freedom}. The case of interest here is when $\dim\mathscr{L}=\aleph_{0}$
(countably infinite). The two $*$-algebras are called the CAR, and
the CCR-algebras, and they are extensively studied; see e.g., \cite{MR611508}.
Of the two, only CAR($\mathscr{L}$) is a $C^{*}$-algebra. The operators
arising from representations of CCR($\mathscr{L}$) will be \emph{unbounded},
but still having a common dense domain in the respective representation
Hilbert spaces. In both cases, we have a Fock representation. For
CCR($\mathscr{L}$), it is realized in the symmetric Fock space $\Gamma_{sym}\left(\mathscr{L}\right)$.
There are many other representations, inequivalent to the respective
Fock representations. 

Let $\mathscr{L}$ be as above. The CCR($\mathscr{L}$) is generated
axiomatically by a system, $a\left(h\right)$, $a^{*}\left(h\right)$,
$h\in\mathscr{L}$, subject to
\begin{equation}
\begin{split}\left[a\left(h\right),a\left(k\right)\right] & =0,\;\forall h,k\in\mathscr{L},\;\mbox{and}\\
\left[a\left(h\right),a^{*}\left(k\right)\right] & =\left\langle h,k\right\rangle _{\mathscr{L}}\mathbbm{1}.
\end{split}
\label{eq:cr2}
\end{equation}

\begin{notation*}
In (\ref{eq:cr2}), $\left[\cdot,\cdot\right]$ denotes the commutator.
More specifically, if $A,B$ are elements in a $*$-algebra, set $\left[A,B\right]:=AB-BA$.
\end{notation*}
The \emph{Fock States} $\omega_{Fock}$ on the CCR-algebra are specified
as follows:
\begin{align}
\omega_{Fock}\left(a\left(h\right)a^{*}\left(k\right)\right) & =\left\langle h,k\right\rangle _{\mathscr{L}}\label{eq:cr4}
\end{align}
with the vacuum property 
\begin{equation}
\omega_{Fock}\left(a^{*}\left(h\right)a\left(h\right)\right)=0,\;\forall h\in\mathscr{L};\label{eq:cr5}
\end{equation}

For the corresponding Fock representations $\pi$ we have:
\begin{align}
\left[\pi\left(h\right),\pi^{*}\left(k\right)\right] & =\left\langle h,k\right\rangle _{\mathscr{L}}I_{\Gamma_{sym}\left(\mathscr{L}\right)},\label{eq:cr4-1}
\end{align}
where $I_{\Gamma_{sym}\left(\mathscr{L}\right)}$ on the RHS of (\ref{eq:cr4-1})
refers to the identity operator.

Some relevant papers regarding the CCR-algebra and its representations
are \cite{MR0152295,MR0417882,MR0474479,MR0390124,MR0622034,MR0330350,MR887102,MR1110533}.

\subsection{An infinite-dimensional Lie algebra}

Let $\mathscr{L}$ be a separable Hilbert space, i.e., $\dim\mathscr{L}=\aleph_{0}$,
and let $\mbox{CCR}\left(\mathscr{L}\right)$ be the corresponding
CCR-algebra. As above, its generators are denoted $a\left(k\right)$
and $a^{*}\left(l\right)$, for $k,l\in\mathscr{L}$. We shall need
the following:
\begin{prop}
~
\begin{enumerate}
\item \label{enu:cc1}The ``quadratic'' elements in $\mbox{CCR}\left(\mathscr{L}\right)$
of the form $a\left(k\right)a^{*}\left(l\right)$, $k,l\in\mathscr{L}$,
span a Lie algebra $\mathfrak{g}\left(\mathscr{L}\right)$ under the
commutator bracket.
\item We have 
\begin{eqnarray*}
 &  & \left[a\left(h\right)a^{*}\left(k\right),a\left(l\right)a^{*}\left(m\right)\right]\\
 & = & \left\langle h,m\right\rangle _{\mathscr{L}}a\left(l\right)a^{*}\left(k\right)-\left\langle k,l\right\rangle _{\mathscr{L}}a\left(h\right)a^{*}\left(m\right),
\end{eqnarray*}
for all $h,k,l,m\in\mathscr{L}$. 
\item \label{enu:cc3}If $\left\{ \varepsilon_{i}\right\} _{i\in\mathbb{N}}$
is an ONB in $\mathscr{L}$, then the non-zero commutators are as
follows: Set $\gamma_{i,j}:=a\left(\varepsilon_{i}\right)a^{*}\left(\varepsilon_{j}\right)$,
then, for $i\neq j$, we have 
\begin{eqnarray}
\left[\gamma_{i,i},\gamma_{j,i}\right] & = & \gamma_{j,i};\\
\left[\gamma_{i,i},\gamma_{i,j}\right] & = & -\gamma_{i,j};\:\mbox{and}\\
\left[\gamma_{j,i},\gamma_{i,j}\right] & = & \gamma_{i,i}-\gamma_{j,j}.
\end{eqnarray}
All other commutators vanish; in particular, $\left\{ \gamma_{i,i}\mid i\in\mathbb{N}\right\} $
spans an \uline{abelian} sub-Lie algebra in $\mathfrak{g}\left(\mathscr{L}\right)$.

Note further that, when $i\neq j$, then the three elements 
\begin{equation}
\gamma_{i,i}-\gamma_{j,j},\quad\gamma_{i,j},\quad\mbox{and}\quad\gamma_{j,i}\label{eq:li4}
\end{equation}
span (over $\mathbb{R}$) an isomorphic copy of the Lie algebra $sl_{2}\left(\mathbb{R}\right)$.

\item \label{enu:cc4}The Lie algebra generated by the first-order elements
$a\left(h\right)$ and $a^{*}\left(k\right)$ for $h,k\in\mathscr{L}$,
is called the Heisenberg Lie algebra $\mathfrak{h}\left(\mathscr{L}\right)$.
It is normalized by $\mathfrak{g}\left(\mathscr{L}\right)$; indeed
we have:
\begin{align*}
\left[a\left(l\right)a^{*}\left(m\right),a\left(h\right)\right] & =-\left\langle m,h\right\rangle _{\mathscr{L}}a\left(l\right),\;\mbox{and}\\
\left[a\left(l\right)a^{*}\left(m\right),a^{*}\left(k\right)\right] & =\left\langle l,k\right\rangle _{\mathscr{L}}a^{*}\left(m\right),\;\forall l,m,h,k\in\mathscr{L}.
\end{align*}

\end{enumerate}
\end{prop}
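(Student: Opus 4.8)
The plan is to treat all four parts as consequences of a single computational principle: on any associative $*$-algebra the commutator $[\cdot,\cdot]$ is a derivation in each slot, i.e.
\[
[XY,Z]=X[Y,Z]+[X,Z]Y \quad\text{and}\quad [X,YZ]=[X,Y]Z+Y[X,Z].
\]
Combined with the defining relations (\ref{eq:cr2}) — together with their adjoint consequence $[a^{*}(h),a^{*}(k)]=0$, obtained by applying $*$ to $[a(h),a(k)]=0$ — every bracket in the statement collapses to a sum of inner-product scalars times quadratic (or first-order) monomials. I would first record these two Leibniz identities and the relation $[a^{*}(h),a^{*}(k)]=0$ as the only inputs, so that the remainder is bookkeeping.

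For part (2), I would expand $[a(h)a^{*}(k),\,a(l)a^{*}(m)]$ by peeling off the left factor $a(h)$ and then the inner factor $a^{*}(k)$ via the two Leibniz identities, substituting at each stage $[a(h),a^{*}(m)]=\langle h,m\rangle_{\mathscr{L}}\mathbbm{1}$, $[a^{*}(k),a(l)]=-\langle l,k\rangle_{\mathscr{L}}\mathbbm{1}$, and the two vanishing brackets. The only surviving cross terms leave behind exactly $a(l)a^{*}(k)$ and $a(h)a^{*}(m)$, with coefficients $\langle h,m\rangle_{\mathscr{L}}$ and $-\langle l,k\rangle_{\mathscr{L}}$; matching the latter to the stated $-\langle k,l\rangle_{\mathscr{L}}$ is precisely where one fixes the sesquilinearity convention (which slot $h\mapsto a(h)$ is (anti-)linear in) so that the scalars appear in the stated non-conjugated form. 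Part (1) is then immediate: bilinearity, antisymmetry, and the Jacobi identity hold automatically for the commutator bracket on an associative algebra, so the span $\mathfrak{g}(\mathscr{L})$ of the quadratics is a Lie algebra as soon as (2) exhibits it as closed under $[\cdot,\cdot]$.

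Part (3) is the specialization of (2) to $h,k,l,m\in\{\varepsilon_{i}\}$, where $\langle\varepsilon_{a},\varepsilon_{b}\rangle_{\mathscr{L}}=\delta_{a,b}$ turns the formula into the single structure-constant identity
\[
[\gamma_{i,j},\gamma_{k,l}]=\delta_{i,l}\,\gamma_{k,j}-\delta_{j,k}\,\gamma_{i,l}.
\]
Reading off the index patterns $(i,i)$ vs.\ $(j,i)$, $(i,i)$ vs.\ $(i,j)$, and $(j,i)$ vs.\ $(i,j)$ gives the three displayed brackets, while the same identity shows that every other pair commutes (in particular $[\gamma_{i,i},\gamma_{j,j}]=0$, so the diagonal is abelian). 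For the $sl_{2}$ claim I would set $H=\gamma_{i,i}-\gamma_{j,j}$, $E=\gamma_{j,i}$, $F=\gamma_{i,j}$ and verify $[H,E]=2E$, $[H,F]=-2F$, $[E,F]=H$ directly from the structure-constant identity; linear independence of these three — needed to conclude the copy is genuinely \emph{isomorphic} to $sl_{2}(\mathbb{R})$ rather than a degenerate image — follows from the Poincar\'{e}--Birkhoff--Witt normal form for the enveloping algebra, since for $i\neq j$ the degree-two monomials $\gamma_{i,i},\gamma_{j,j},\gamma_{i,j},\gamma_{j,i}$ are distinct basis elements. Part (4) is the same Leibniz expansion of a quadratic against a first-order generator, yielding $[a(l)a^{*}(m),a(h)]=-\langle m,h\rangle_{\mathscr{L}}a(l)$ and $[a(l)a^{*}(m),a^{*}(k)]=\langle l,k\rangle_{\mathscr{L}}a^{*}(m)$, which shows that $\mathfrak{g}(\mathscr{L})$ normalizes $\mathfrak{h}(\mathscr{L})$ (the brackets land back among first-order generators).

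There is no deep obstacle here: the work is entirely formal manipulation inside the abstract $*$-algebra, so domain and closability questions never enter. The two places that actually demand attention are (a) tracking the sesquilinear convention so that the inner-product coefficients in (2) and (4) come out in the stated form, and (b) invoking PBW linear independence to certify that the $sl_{2}(\mathbb{R})$-triple in (3) spans a genuine three-dimensional subalgebra rather than collapsing.
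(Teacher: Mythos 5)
Your proposal is correct and follows essentially the same route as the paper, whose proof is simply the remark that all four assertions are verified directly from the CCR axioms; you carry out that verification explicitly via the Leibniz property of the commutator, and your structure-constant identity $[\gamma_{i,j},\gamma_{k,l}]=\delta_{i,l}\gamma_{k,j}-\delta_{j,k}\gamma_{i,l}$ together with the check $[H,E]=2E$, $[H,F]=-2F$, $[E,F]=H$ is exactly what the terse official proof leaves to the reader. The PBW linear-independence remark for the $sl_{2}(\mathbb{R})$ claim is a sensible addition not present in the paper.
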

\begin{proof}
The verification of each of the four assertions (\ref{enu:cc1})-(\ref{enu:cc4})
uses only the fixed axioms for the CCR, i.e., 
\begin{equation}
\left\{ \begin{split}\left[a\left(k\right),a\left(l\right)\right] & =0,\\
\left[a^{*}\left(k\right),a^{*}\left(l\right)\right] & =0,\;\mbox{and}\\
\left[a\left(k\right),a^{*}\left(l\right)\right] & =\left\langle k,l\right\rangle _{\mathscr{L}}\mathbbm{1},\;k,l\in\mathscr{L};
\end{split}
\right.\label{eq:li5}
\end{equation}
where $\mathbbm{1}$ denotes the unit-element in $\mbox{CCR}\left(\mathscr{L}\right)$.\end{proof}
\begin{cor}
\label{cor:li}Let $\mbox{CCR}\left(\mathscr{L}\right)$ be the CCR-algebra,
generators $a\left(k\right)$, $a^{*}\left(l\right)$, $k,l\in\mathscr{L}$,
and let $\left[\cdot,\cdot\right]$ denote the commutator Lie bracket;
then, for all $k,h_{1},\cdots,h_{n}\in\mathscr{L}$, and all $p\in\mathbb{R}\left[x_{1},\cdots,x_{n}\right]$
(= the $n$-variable polynomials over $\mathbb{R}$), we have
\begin{eqnarray}
 &  & \left[a\left(k\right),p\left(a^{*}\left(h_{1}\right),\cdots,a^{*}\left(h_{n}\right)\right)\right]\nonumber \\
 & = & \sum_{i=1}^{n}\frac{\partial p}{\partial x_{i}}\left(a^{*}\left(h_{1}\right),\cdots,a^{*}\left(h_{n}\right)\right)\left\langle k,h_{i}\right\rangle _{\mathscr{L}}.\label{eq:li6}
\end{eqnarray}
\end{cor}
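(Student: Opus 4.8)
The plan is to recognize the left-hand side of (\ref{eq:li6}) as a derivation in the polynomial argument $p$ and to match it against the right-hand side, which is visibly the image under substitution of a first-order differential operator. First I would record the elementary fact that in any associative algebra the map $\mathrm{ad}_{a(k)} := [a(k),\,\cdot\,]$ is a derivation, i.e.\ it satisfies $\mathrm{ad}_{a(k)}(XY) = \mathrm{ad}_{a(k)}(X)\,Y + X\,\mathrm{ad}_{a(k)}(Y)$. The only structural input from the CCR axioms (\ref{eq:li5}) is then the pair of observations that $[a^{*}(h_i), a^{*}(h_j)] = 0$ for all $i,j$, so that the creation operators mutually commute, and that $\mathrm{ad}_{a(k)}(a^{*}(h_i)) = [a(k), a^{*}(h_i)] = \langle k, h_i\rangle_{\mathscr{L}}\,\mathbbm{1}$, a central scalar.

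Because the $a^{*}(h_i)$ commute, the substitution map $\Phi(p) := p(a^{*}(h_1), \dots, a^{*}(h_n))$ is a well-defined algebra homomorphism from the commutative polynomial ring $\mathbb{R}[x_1, \dots, x_n]$ into $\mbox{CCR}(\mathscr{L})$; this is precisely what makes both sides of (\ref{eq:li6}) meaningful as functions of $p$. I would then introduce the two maps $\Psi_1 := \mathrm{ad}_{a(k)} \circ \Phi$ and $\Psi_2 := \Phi \circ D$, where $D := \sum_{i=1}^{n} \langle k, h_i\rangle_{\mathscr{L}}\,\partial/\partial x_i$ is the first-order differential operator appearing on the right of (\ref{eq:li6}). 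Using the derivation property of $\mathrm{ad}_{a(k)}$ together with the multiplicativity of $\Phi$, one checks that $\Psi_1$ obeys the twisted Leibniz rule $\Psi_1(pq) = \Psi_1(p)\,\Phi(q) + \Phi(p)\,\Psi_1(q)$; the same rule holds for $\Psi_2$ since $D$ is an ordinary derivation on the polynomial ring and $\Phi$ is a homomorphism.

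It remains to see that $\Psi_1$ and $\Psi_2$ agree. Both annihilate constants, and on each generator $x_i$ one has $\Psi_1(x_i) = [a(k), a^{*}(h_i)] = \langle k, h_i\rangle_{\mathscr{L}}\,\mathbbm{1}$ while $\Psi_2(x_i) = \Phi(\langle k, h_i\rangle_{\mathscr{L}}) = \langle k, h_i\rangle_{\mathscr{L}}\,\mathbbm{1}$, so they coincide on generators. Since $\mathbb{R}[x_1, \dots, x_n]$ is generated as an algebra by $1$ and the $x_i$, a routine induction on the total degree of a monomial, driven by the shared twisted Leibniz rule, forces $\Psi_1 = \Psi_2$ on all monomials, hence on all of $\mathbb{R}[x_1, \dots, x_n]$ by linearity; this identity is exactly (\ref{eq:li6}).

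The only point demanding care --- and the step I would flag as the potential obstacle --- is the commutativity $[a^{*}(h_i), a^{*}(h_j)] = 0$: without it the substitution $\Phi$ is not a homomorphism, the polynomial $p$ carries no unambiguous operator meaning (the ordering of factors would matter), and the partial-derivative bookkeeping on the right would break down. Granting that, the argument is purely formal. For readers who prefer an explicit computation, the same conclusion follows by expanding $\mathrm{ad}_{a(k)}$ on a monomial $a^{*}(h_{i_1})\cdots a^{*}(h_{i_m})$ via the Leibniz rule: each factor contributes the scalar $\langle k, h_{i_j}\rangle_{\mathscr{L}}$ times the product of the remaining (commuting) factors, and summing these contributions reproduces term-by-term the action of $D$ on $x_{i_1}\cdots x_{i_m}$.
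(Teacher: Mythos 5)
Your proposal is correct and is essentially a full write-out of the paper's own (one-line) proof, which simply asserts that (\ref{eq:li6}) follows from the CCR axioms (\ref{eq:li5}) ``plus a little combinatorics'': your derivation-property argument, and the explicit monomial expansion you sketch at the end, are precisely that combinatorics made explicit. The only cosmetic remark is that your symbol for the substitution homomorphism clashes with the paper's later use of $\Phi$ for the Gaussian field, so a different letter would be preferable.
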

\begin{proof}
The verification of (\ref{eq:li6}) uses only the axioms for the CCR,
i.e., the commutation relations (\ref{eq:li5}) above, plus a little
combinatorics. 
\end{proof}
We shall now return to a stochastic variation of formula (\ref{eq:li6}),
the so called Malliavin derivative in the direction $k$. In this,
the system $\left(a^{*}\left(h_{1}\right),\cdots,a^{*}\left(h_{n}\right)\right)$
in (\ref{eq:li6}) instead takes the form of a multivariate Gaussian
random variable.

\subsection{\label{sub:SI}Gaussian Hilbert space}

The literature on Gaussian Hilbert space, white noise analysis, and
its relevance to Malliavin calculus is vast; and we limit ourselves
here to citing \cite{MR2052267,MR2793121,MR2966130,MR3155263,MR3231624,MR3402823,MR3424704},
and the papers cited there.

\paragraph*{Setting and Notation.}
\begin{enumerate}[label=]
\item $\mathscr{L}$: a fixed \emph{real} Hilbert space
\item $\left(\Omega,\mathcal{F},\mathbb{P}\right)$: a fixed probability
space
\item $L^{2}\left(\Omega,\mathbb{P}\right)$: the Hilbert space $L^{2}\left(\Omega,\mathcal{F},\mathbb{P}\right)$,
also denoted by $L^{2}\left(\mathbb{P}\right)$
\item $\mathbb{E}$: the mean or expectation functional, where $\mathbb{E}\left(\cdots\right)=\int_{\Omega}\left(\cdots\right)d\mathbb{P}$\end{enumerate}
\begin{defn}
\label{def:GH}Fix a \emph{real} Hilbert space $\mathscr{L}$ and
a given probability space $\left(\Omega,\mathcal{F},\mathbb{P}\right)$.
We say the pair $\left(\mathscr{L},\left(\Omega,\mathcal{F},\mathbb{P}\right)\right)$
is a \emph{Gaussian Hilbert space}.

A \emph{Gaussian field} is a linear mapping $\Phi:\mathscr{L}\longrightarrow L^{2}\left(\Omega,\mathbb{P}\right)$,
such that 
\[
\left\{ \Phi\left(h\right)\mid h\in\mathscr{L}\right\} 
\]
is a Gaussian process indexed by $\mathscr{L}$ satisfying: 
\begin{enumerate}
\item $\mathbb{E}\left(\Phi\left(h\right)\right)=0$, $\forall h\in\mathscr{L}$;
\item $\forall n\in\mathbb{N}$, $\forall l_{1},\cdots,l_{n}\subset\mathscr{L}$,
the random variable $\left(\Phi\left(l_{1}\right),\cdots,\Phi\left(l_{n}\right)\right)$
is jointly Gaussian, with 
\begin{equation}
\mathbb{E}\left(\Phi\left(l_{i}\right)\Phi\left(l_{j}\right)\right)=\left\langle l_{i},l_{j}\right\rangle ,\label{eq:jg}
\end{equation}
i.e., $\left(\left\langle l_{i},l_{j}\right\rangle \right)_{i=1}^{n}$
= the covariance matrix. (For the existence of Gaussian fields, see
the discussion below.)
\end{enumerate}
\end{defn}

\begin{rem}
\label{rem:gauss}For all finite systems $\left\{ l_{i}\right\} \subset\mathscr{L}$,
set $G_{n}=\left(\left\langle l_{i},l_{j}\right\rangle \right)_{i,j=1}^{n}$,
called the Gramian. Assume $G_{n}$ non-singular for convenience,
so that $\det G_{n}\neq0$. Then there is an associated Gaussian density
$g^{\left(G_{n}\right)}$ on $\mathbb{R}^{n}$, 
\begin{equation}
g^{\left(G_{n}\right)}\left(x\right)=\left(2\pi\right)^{-n/2}\left(\det G_{n}\right)^{-1/2}\exp\left(-\frac{1}{2}\left\langle x,G_{n}^{-1}x\right\rangle _{\mathbb{R}^{n}}\right)\label{eq:gd1}
\end{equation}
The condition in (\ref{eq:jg}) assumes that for all continuous functions
$f:\mathbb{R}^{n}\longrightarrow\mathbb{R}$ (e.g., polynomials),
we have 
\begin{equation}
\mathbb{E}\underset{\text{real valued}}{(\underbrace{f\left(\Phi\left(l_{1}\right),\cdots,\Phi\left(l_{n}\right)\right)})}=\int_{\mathbb{R}^{n}}f\left(x\right)g^{\left(G_{n}\right)}\left(x\right)dx;\label{eq:gd2}
\end{equation}
where $x=\left(x_{1},\cdots,x_{n}\right)\in\mathbb{R}^{n}$, and $dx=dx_{1}\cdots dx_{n}$
= Lebesgue measure on $\mathbb{R}^{n}$. See \figref{gauss} for an
illustration.

In particular, for $n=2$, $\left\langle l_{1},l_{2}\right\rangle =\left\langle k,l\right\rangle $,
and $f\left(x_{1},x_{2}\right)=x_{1}x_{2}$, we then get $\mathbb{E}\left(\Phi\left(k\right)\Phi\left(l\right)\right)=\left\langle k,l\right\rangle $,
i.e., the inner product in $\mathscr{L}$.

\begin{figure}
\includegraphics[width=0.6\textwidth]{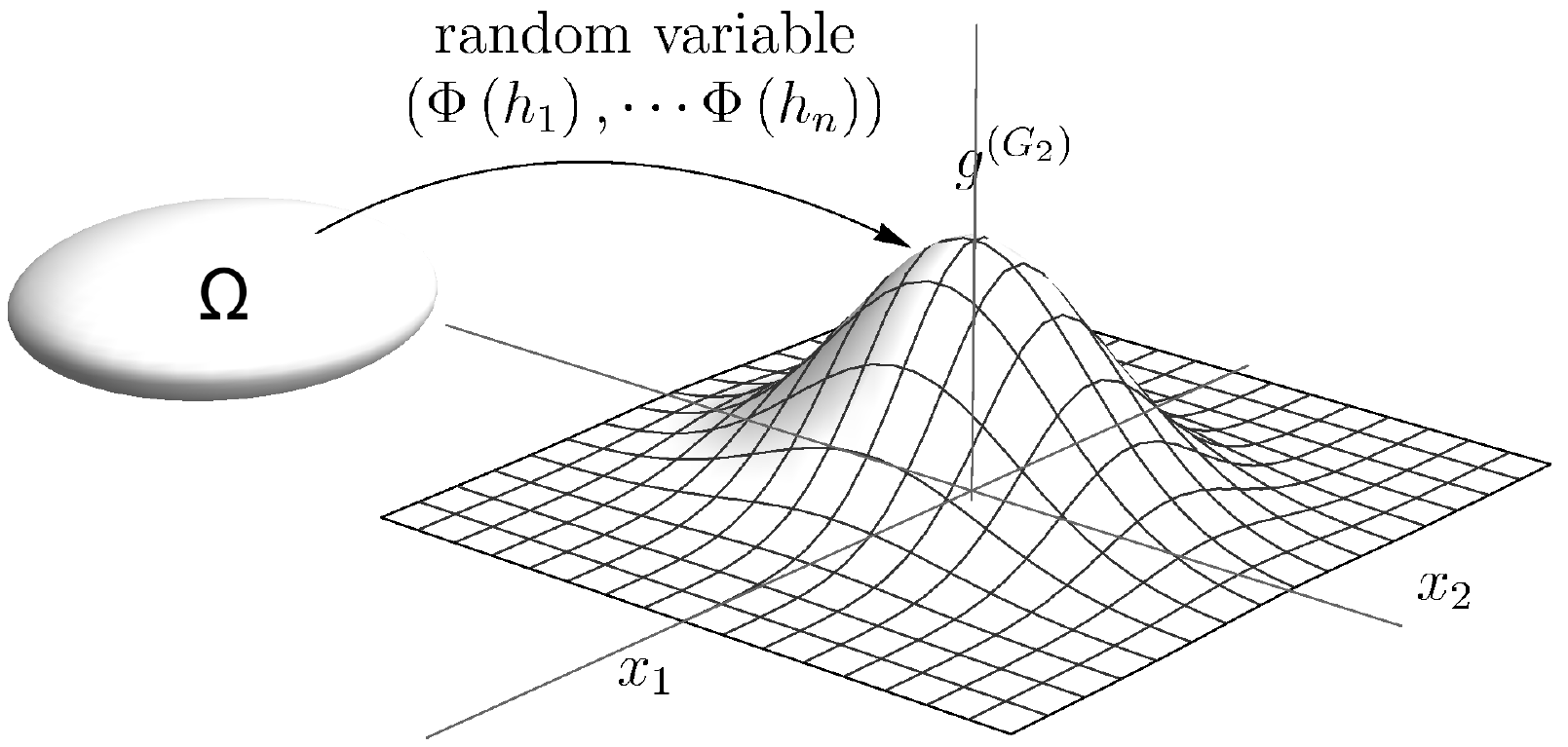}

\caption{\label{fig:gauss}The multivariate Gaussian $\left(\Phi\left(h_{1}\right),\cdots\Phi\left(h_{n}\right)\right)$
and its distribution. The Gaussian with Gramian matrix (Gram matrix)
$G_{n}$, $n=2$.}
\end{figure}

\end{rem}

For our applications, we need the following facts about Gaussian fields.

Fix a Hilbert space $\mathscr{L}$ over $\mathbb{R}$ with inner product
$\left\langle \cdot,\cdot\right\rangle _{\mathscr{L}}$. Then (see
\cite{MR562914,MR3424704,MR0265548}) there is a \emph{probability
space} $\left(\Omega,\mathcal{F},\mathbb{P}\right)$, depending on
$\mathscr{L}$, and a \emph{real} linear mapping $\Phi:\mathscr{L}\longrightarrow L^{2}\left(\Omega,\mathcal{F},\mathbb{P}\right)$,
i.e., a Gaussian field as specified in \defref{GH}, satisfying
\begin{equation}
\mathbb{E}\big(e^{i\Phi\left(k\right)}\big)=e^{-\frac{1}{2}\left\Vert k\right\Vert ^{2}},\quad\forall k\in\mathscr{L}.\label{eq:e1}
\end{equation}

It follows from the literature (see also \cite{1408.1164}) that $\Phi\left(k\right)$
may be thought of as a generalized It\=o-integral. One approach to
this is to select a nuclear Fréchet space $\mathcal{S}$ with dual
$\mathcal{S}'$ such that 
\begin{equation}
\mathcal{S}\hookrightarrow\mathscr{L}\hookrightarrow\mathcal{S}'\label{eq:e0}
\end{equation}
forms a Gelfand triple. In this case we may take $\Omega=\mathcal{S}'$,
and $\Phi\left(k\right)$, $k\in\mathscr{L}$, to be the extension
of the mapping 
\begin{equation}
\mathcal{S}'\ni\omega\longrightarrow\omega\left(\varphi\right)=\left\langle \varphi,\omega\right\rangle \label{eq:e2}
\end{equation}
defined initially only for $\varphi\in\mathcal{S}$, but, with the
use of (\ref{eq:e2}), now extended, via (\ref{eq:e1}), from $\mathcal{S}$
to $\mathscr{L}$. See also \exaref{GH} below. 
\begin{example}
Fix a measure space $\left(X,\mathcal{B},\mu\right)$. Let $\Phi:L^{2}\left(\mu\right)\longrightarrow L^{2}\left(\Omega,\mathbb{P}\right)$
be a Gaussian field such that
\[
\mathbb{E}\left(\Phi_{A}\Phi_{B}\right)=\mu\left(A\cap B\right),\quad\forall A,B\in\mathcal{B}
\]
where $\Phi_{E}:=\Phi\left(\chi_{E}\right)$, $\forall E\in\mathcal{B}$;
and $\chi_{E}$ denotes the characteristic function. In this case,
$\mathscr{L}=L^{2}\left(X,\mu\right)$. 

Then we have $\Phi\left(k\right)=\int_{X}k\left(x\right)d\Phi$, i.e.,
the It\=o-integral, and the following holds:
\begin{equation}
\mathbb{E}\left(\Phi\left(k\right)\Phi\left(l\right)\right)=\left\langle k,l\right\rangle =\int_{X}k\left(x\right)l\left(x\right)d\mu\left(x\right)\label{eq:iso}
\end{equation}
for all $k,l\in\mathscr{L}=L^{2}\left(X,\mu\right)$. Eq. (\ref{eq:iso})
is known as the It\=o-isometry.
\end{example}

\begin{example}[The special case of Brownian motion]
\label{exa:GH} There are many ways of realizing a Gaussian probability
space $\left(\Omega,\mathcal{F},\mathbb{P}\right)$. Two candidates
for the sample space:
\begin{casenv}
\item Standard Brownian motion process: $\Omega=C\left(\mathbb{R}\right)$,
$\mathcal{F}$ = $\sigma$-algebra generated by cylinder sets, $\mathbb{P}$
= Wiener measure. Set $B_{t}\left(\omega\right)=\omega\left(t\right)$,
$\forall\omega\in\Omega$; and $\Phi\left(k\right)=\int_{\mathbb{R}}k\left(t\right)dB_{t}$,
$\forall k\in L^{2}\left(\Omega,\mathbb{P}\right)$.
\item The Gelfand triples: $\mathcal{S}\hookrightarrow L^{2}\left(\mathbb{R}\right)\hookrightarrow\mathcal{S}'$,
where

\begin{enumerate}[label=]
\item $S$ = the Schwartz space of test functions;
\item $\mathcal{S}'$ = the space of tempered distributions. 
\end{enumerate}

Set $\Omega=\mathcal{S}'$, $\mathcal{F}$ = $\sigma$-algebra generated
by cylinder sets of $\mathcal{S}'$, and define 
\[
\Phi\left(k\right):=\widehat{k}\left(\omega\right)=\left\langle k,\omega\right\rangle ,\;k\in L^{2}\left(\mathbb{R}\right),\;\omega\in\mathcal{S}'.
\]
Note $\Phi$ is defined by extending the duality $\mathcal{S}\longleftrightarrow\mathcal{S}'$
to $L^{2}\left(\mathbb{R}\right)$. The probability measure $\mathbb{P}$
is defined from
\[
\mathbb{E}\big(e^{i\left\langle k,\cdot\right\rangle }\big)=\int_{\mathcal{S}'}e^{i\widehat{k}\left(\omega\right)}d\mathbb{P}\left(\omega\right)=e^{-\frac{1}{2}\left\Vert k\right\Vert _{L^{2}\left(\mathbb{R}\right)}^{2}},
\]
by Minlos' theorem \cite{MR562914,MR3424704}. 

\end{casenv}
\end{example}
\begin{defn}
\label{def:poly}Let $\mathscr{D}\subset L^{2}\left(\Omega,\mathcal{F},\mathbb{P}\right)$
be the dense subspace spanned by functions $F$, where $F\in\mathscr{D}$
iff $\exists n\in\mathbb{N}$, $\exists h_{1},\cdots,h_{n}\in\mathscr{L}$,
and $p\in\mathbb{R}\left[x_{1},\cdots,x_{n}\right]$ = the polynomial
ring, such that 
\[
F=p\left(\Phi\left(h_{1}\right),\cdots,\Phi\left(h_{n}\right)\right):\Omega\longrightarrow\mathbb{R}.
\]
(See the diagram below.) The case of $n=0$ corresponds to the constant
function $\mathbbm{1}$ on $\Omega$. Note that $\Phi\left(h_{i}\right)\in L^{2}\left(\Omega,\mathbb{P}\right)$.
\[
\xymatrix{ &  & \mathbb{R}^{n}\ar[drr]^{p}\\
\Omega\ar[rru]^{\left(\Phi\left(h_{1}\right),\cdots,\Phi\left(h_{n}\right)\right)}\ar[rrrr]_{F} &  &  &  & \mathbb{R}
}
\]
\end{defn}
\begin{lem}
\label{lem:dense}The polynomial fields $\mathscr{D}$ in Def. \ref{def:poly}
form a dense subspace in $L^{2}\left(\Omega,\mathbb{P}\right)$. \end{lem}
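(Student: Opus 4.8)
The plan is to reduce the infinite-dimensional density statement to a finite-dimensional one, and there to invoke the classical fact that polynomials are dense in the $L^{2}$-space of a non-degenerate Gaussian measure. Since $\mathscr{L}$ is separable, fix an ONB $\left\{ \varepsilon_{i}\right\} _{i\in\mathbb{N}}$ and set $\mathcal{F}_{N}:=\sigma\left(\Phi\left(\varepsilon_{1}\right),\cdots,\Phi\left(\varepsilon_{N}\right)\right)$, an increasing sequence of sub-$\sigma$-algebras of $\mathcal{F}$. Because $\left(\Omega,\mathcal{F},\mathbb{P}\right)$ is the space produced by the Minlos/Gelfand-triple construction (see (\ref{eq:e2}) and \exaref{GH}), its $\sigma$-algebra is generated, modulo $\mathbb{P}$-null sets, by the field, so $\bigvee_{N}\mathcal{F}_{N}=\mathcal{F}$. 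By the martingale convergence theorem the conditional expectations $\mathbb{E}\left[\,\cdot\mid\mathcal{F}_{N}\right]$ converge strongly to the identity on $L^{2}\left(\Omega,\mathbb{P}\right)$; equivalently, $\bigcup_{N}L^{2}\left(\Omega,\mathcal{F}_{N},\mathbb{P}\right)$ is dense. It therefore suffices to approximate every $\mathcal{F}_{N}$-measurable square-integrable function by elements of $\mathscr{D}$.

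Next I would identify $L^{2}\left(\Omega,\mathcal{F}_{N},\mathbb{P}\right)$ with $L^{2}\left(\mathbb{R}^{N},\gamma_{N}\right)$. By the Gaussian hypothesis (\ref{eq:jg}) and the relation $\left\langle \varepsilon_{i},\varepsilon_{j}\right\rangle =\delta_{ij}$, the pushforward of $\mathbb{P}$ under $\omega\longmapsto\left(\Phi\left(\varepsilon_{1}\right)\left(\omega\right),\cdots,\Phi\left(\varepsilon_{N}\right)\left(\omega\right)\right)$ is the standard Gaussian measure $\gamma_{N}$ on $\mathbb{R}^{N}$, and every $\mathcal{F}_{N}$-measurable $L^{2}$-function has the form $f\left(\Phi\left(\varepsilon_{1}\right),\cdots,\Phi\left(\varepsilon_{N}\right)\right)$ for a unique $f\in L^{2}\left(\mathbb{R}^{N},\gamma_{N}\right)$, with equal norms. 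Under this identification an element $p\left(\Phi\left(\varepsilon_{1}\right),\cdots,\Phi\left(\varepsilon_{N}\right)\right)\in\mathscr{D}$ corresponds exactly to the polynomial $p\in L^{2}\left(\gamma_{N}\right)$, so the claim reduces to: polynomials are dense in $L^{2}\left(\mathbb{R}^{N},\gamma_{N}\right)$.

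For this finite-dimensional density I would argue by duality. Suppose $f\in L^{2}\left(\gamma_{N}\right)$ is orthogonal to all polynomials, so that $\int_{\mathbb{R}^{N}}f\left(x\right)x^{\alpha}\,d\gamma_{N}\left(x\right)=0$ for every multi-index $\alpha$. Because $\gamma_{N}$ has Gaussian tails, the map $z\longmapsto\int f\left(x\right)e^{\left\langle z,x\right\rangle }\,d\gamma_{N}\left(x\right)$ is entire on $\mathbb{C}^{N}$: for each fixed $z$ the integrand is the product of $f\in L^{2}\left(\gamma_{N}\right)$ with $e^{\left\langle z,x\right\rangle }\in L^{2}\left(\gamma_{N}\right)$, so Cauchy-Schwarz bounds the integral and justifies differentiation under the integral sign. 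Its Taylor coefficients at the origin are, up to factorials, the moments $\int f\,x^{\alpha}\,d\gamma_{N}=0$, so the entire function vanishes identically. Restricting to $z=it$, $t\in\mathbb{R}^{N}$, shows that the Fourier transform of the finite signed measure $f\,d\gamma_{N}$ is zero, whence $f=0$ $\gamma_{N}$-a.e. Thus the orthogonal complement of the polynomials is trivial, i.e., polynomials are dense; combining the three steps yields the density of $\mathscr{D}$.

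The step requiring the most care is the passage from the finite-dimensional approximations back to the full space, namely the identity $\bigvee_{N}\mathcal{F}_{N}=\mathcal{F}$: this rests on the ambient $\sigma$-algebra being generated by the Gaussian field, a property guaranteed by the concrete construction of $\left(\Omega,\mathcal{F},\mathbb{P}\right)$ rather than by the abstract axioms of \defref{GH} alone. The finite-dimensional polynomial density, though classical, is the analytic heart of the argument and hinges on the determinacy of the Gaussian moment problem, captured here by the entire-function computation.
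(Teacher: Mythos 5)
Your proof is correct, but it follows a genuinely different route from the paper's. The paper invokes the unitary isomorphism $W:\Gamma_{sym}\left(\mathscr{L}\right)\longrightarrow L^{2}\left(\Omega,\mathbb{P}\right)$ sending the exponential vectors $e^{k}$ to the normalized exponentials $e^{\Phi\left(k\right)-\frac{1}{2}\left\Vert k\right\Vert _{\mathscr{L}}^{2}}$, and transfers the density of $span\left\{ e^{k}\right\} $ in Fock space; this is short and has the side benefit of setting up the intertwiner $W$ reused later in \thmref{re}, but it leaves two points implicit, namely that $W$ is \emph{onto} (equivalently, that the Gaussian exponentials span a dense subspace of $L^{2}\left(\Omega,\mathbb{P}\right)$) and that each $e^{\Phi\left(k\right)}$ is itself an $L^{2}$-limit of elements of $\mathscr{D}$. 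Your argument --- conditioning on $\mathcal{F}_{N}=\sigma\left(\Phi\left(\varepsilon_{1}\right),\cdots,\Phi\left(\varepsilon_{N}\right)\right)$, martingale convergence, the Doob--Dynkin identification with $L^{2}\left(\mathbb{R}^{N},\gamma_{N}\right)$, and the entire-function/moment-determinacy proof that polynomials are dense there --- is more elementary and self-contained, and it has the virtue of isolating exactly where the hypothesis $\bigvee_{N}\mathcal{F}_{N}=\mathcal{F}$ enters; that hypothesis is not part of the abstract \defref{GH} but is supplied by the concrete Minlos/Wiener constructions, and it is precisely what makes the paper's $W$ surjective, so both proofs need it and you are right to flag it. Your finite-dimensional step (vanishing of $z\mapsto\int f\,e^{\left\langle z,x\right\rangle }d\gamma_{N}$ and Fourier uniqueness) is in essence the same analytic fact that underlies the density of exponential vectors in the paper's approach, so the two proofs converge on the same analytic core; you simply reach it through conditional expectations rather than through the second quantization functor. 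One could slightly streamline your last step by noting that it suffices to approximate functions measurable with respect to finitely many $\Phi\left(h\right)$, $h$ arbitrary in $\mathscr{L}$, but restricting to the ONB coordinates as you do is enough since those polynomials already lie in $\mathscr{D}$.
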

\begin{proof}
The easiest argument below takes advantage of the isometric isomorphism
of $L^{2}\left(\Omega,\mathbb{P}\right)$ with the symmetric Fock
space 
\[
\Gamma_{sym}\left(\mathscr{L}\right)=\underset{\text{1 dim}}{\underbrace{\mathscr{H}_{0}}}\oplus\sum_{n=1}^{\infty}\underset{\text{\ensuremath{n}-fold symmetric}}{\underbrace{\left(\mathscr{L}\otimes\cdots\otimes\mathscr{L}\right)}}.
\]
For $k_{i}\in\mathscr{L}$, $i=1,2$, there is a unique vector $e^{k_{i}}\in\Gamma_{sym}\left(\mathscr{L}\right)$
such that 
\[
\left\langle e^{k_{1}},e^{k_{2}}\right\rangle _{\Gamma_{sym}\left(\mathscr{L}\right)}=\sum_{n=0}^{\infty}\frac{\left\langle k_{1},k_{2}\right\rangle ^{n}}{n!}=e^{\left\langle k_{1},k_{2}\right\rangle _{\mathscr{L}}}.
\]
Moreover, 
\[
\Gamma_{sym}\left(\mathscr{L}\right)\ni e^{k}\xrightarrow{\;W_{0}\;}e^{\Phi\left(k\right)-\frac{1}{2}\left\Vert k\right\Vert _{\mathscr{L}}^{2}}\in L^{2}\left(\Omega,\mathbb{P}\right)
\]
extends by linearity and closure to a unitary isomorphism $\Gamma_{sym}\left(\mathscr{L}\right)\xrightarrow{\;W\;}L^{2}\left(\Omega,\mathbb{P}\right)$,
mapping onto $L^{2}\left(\Omega,\mathbb{P}\right)$. Hence $\mathscr{D}$
is dense in $L^{2}\left(\Omega,\mathbb{P}\right)$, as $span\left\{ e^{k}\mid k\in\mathscr{L}\right\} $
is dense in $\Gamma_{sym}\left(\mathscr{L}\right)$. \end{proof}
\begin{lem}
Let $\mathscr{L}$ be a real Hilbert space, and let $\left(\Omega,\mathcal{F},\mathbb{P},\Phi\right)$
be an associated Gaussian field. For $n\in\mathbb{N}$, let $\left\{ h_{1},\cdots,h_{n}\right\} $
be a system of \uline{linearly independent} vectors in $\mathscr{L}$.
Then, for polynomials $p\in\mathbb{R}\left[x_{1},\cdots,x_{n}\right]$,
the following two conditions are equivalent:
\begin{align}
p\left(\Phi\left(h_{1}\right),\cdots,\Phi\left(h_{n}\right)\right) & =0\quad\mbox{a.e. on \ensuremath{\Omega} w.r.t \ensuremath{\mathbb{P}}; and}\label{eq:pp1}\\
p\left(x_{1},\cdots,x_{n}\right) & \equiv0,\quad\forall\left(x_{1},\cdots,x_{n}\right)\in\mathbb{R}^{n}.\label{eq:pp2}
\end{align}
\end{lem}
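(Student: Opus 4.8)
The plan is to prove the two implications separately. The direction (\ref{eq:pp2}) $\Rightarrow$ (\ref{eq:pp1}) is immediate: if $p$ vanishes identically as a function on $\mathbb{R}^{n}$, then substituting the random variables $\Phi\left(h_{i}\right)$ produces the zero element of $L^{2}\left(\Omega,\mathbb{P}\right)$. The content lies in the converse, and the key structural input is that linear independence of $\left\{ h_{1},\cdots,h_{n}\right\}$ makes the distribution of the Gaussian vector $\left(\Phi\left(h_{1}\right),\cdots,\Phi\left(h_{n}\right)\right)$ nondegenerate.

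First I would record the elementary linear-algebra fact that the Gramian $G_{n}=\left(\left\langle h_{i},h_{j}\right\rangle \right)_{i,j=1}^{n}$ of \remref{gauss} is strictly positive definite precisely because $h_{1},\cdots,h_{n}$ are linearly independent: for $c\in\mathbb{R}^{n}$ one has $\left\langle c,G_{n}c\right\rangle _{\mathbb{R}^{n}}=\left\Vert \sum_{i}c_{i}h_{i}\right\Vert _{\mathscr{L}}^{2}$, which vanishes only when $\sum_{i}c_{i}h_{i}=0$, i.e. only when $c=0$. In particular $\det G_{n}\neq0$, so the density $g^{\left(G_{n}\right)}$ of (\ref{eq:gd1}) is well-defined, and, being a positive constant times the exponential of a real quadratic form, it is \emph{strictly positive} at every point of $\mathbb{R}^{n}$.

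Next I would apply the pushforward identity (\ref{eq:gd2}) from \remref{gauss} with the nonnegative continuous choice $f\left(x\right)=p\left(x\right)^{2}$ (the integral is finite since Gaussians have all polynomial moments). This gives
\[
\mathbb{E}\big(p\left(\Phi\left(h_{1}\right),\cdots,\Phi\left(h_{n}\right)\right)^{2}\big)=\int_{\mathbb{R}^{n}}p\left(x\right)^{2}\,g^{\left(G_{n}\right)}\left(x\right)\,dx.
\]
Assuming (\ref{eq:pp1}), the left-hand side is $0$, so the nonnegative integrand $p\left(x\right)^{2}g^{\left(G_{n}\right)}\left(x\right)$ integrates to zero and hence vanishes Lebesgue-a.e.; since $g^{\left(G_{n}\right)}>0$ everywhere, this forces $p\left(x\right)=0$ for Lebesgue-a.e. $x\in\mathbb{R}^{n}$. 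I would then close with continuity: a polynomial vanishing Lebesgue-almost everywhere on $\mathbb{R}^{n}$ vanishes identically, because the zero set of a nonzero real polynomial has Lebesgue measure zero. This yields (\ref{eq:pp2}).

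The main obstacle — and the only place linear independence enters — is the positive-definiteness of $G_{n}$. Without it the Gaussian vector would be supported on a proper affine subspace of $\mathbb{R}^{n}$, the density of (\ref{eq:gd1}) would degenerate, and a nonzero polynomial vanishing on that subspace would satisfy (\ref{eq:pp1}) while failing (\ref{eq:pp2}); so the hypothesis cannot be dropped, and verifying that it guarantees full support is the crux of the argument.
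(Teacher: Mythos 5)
Your proposal is correct and follows essentially the same route as the paper's own proof: both reduce (\ref{eq:pp1}) to the vanishing of $\int_{\mathbb{R}^{n}}\left|p\left(x\right)\right|^{2}g^{\left(G_{n}\right)}\left(x\right)dx$ via (\ref{eq:gd2}), use strict positivity of the nondegenerate Gaussian density to get $p=0$ Lebesgue-a.e., and conclude by the fact that a nonzero polynomial cannot vanish a.e. Your write-up merely makes explicit two points the paper leaves implicit, namely why linear independence forces $\det G_{n}\neq0$ and the trivial converse direction.
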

\begin{proof}
Let $G_{n}=\left(\left\langle h_{i},h_{j}\right\rangle \right)_{i,j=1}^{n}$
be the Gramian matrix. We have $\det G_{n}\neq0$. Let $g^{\left(G_{n}\right)}\left(x_{1},\cdots,x_{n}\right)$
be the corresponding Gaussian density; see (\ref{eq:gd1}), and \figref{gauss}.
Then the following are equivalent: 
\begin{enumerate}
\item Eq. (\ref{eq:pp1}) holds;
\item $p\left(\Phi\left(h_{1}\right),\cdots,\Phi\left(h_{n}\right)\right)=0$
in $L^{2}\left(\Omega,\mathcal{F},\mathbb{P}\right)$;
\item $\mathbb{E}\left(\left|p\left(\Phi\left(h_{1}\right),\cdots,\Phi\left(h_{n}\right)\right)\right|^{2}\right)=\int_{\mathbb{R}^{n}}\left|p\left(x\right)\right|^{2}g^{\left(G_{n}\right)}\left(x\right)dx=0$;
\item $p\left(x\right)=0$ a.e. $x$ w.r.t. the Lebesgue measure in $\mathbb{R}^{n}$
;
\item $p\left(x\right)=0$, $\forall x\in\mathbb{R}^{n}$; i.e., (\ref{eq:pp2})
holds.
\end{enumerate}
\end{proof}

\section{\label{sec:MD}The Malliavin derivatives}

Below we give an application of the closability criterion for linear
operators $T$ between different Hilbert spaces $\mathscr{H}_{1}$
and $\mathscr{H}_{2}$, but having dense domain in the first Hilbert
space. In this application, we shall take for $T$ to be the so called
Malliavin derivative. The setting for it is that of the Wiener process.
For the Hilbert space $\mathscr{H}_{1}$ we shall take the $L^{2}$-space,
$L^{2}\left(\Omega,\mathbb{P}\right)$ where $\mathbb{P}$ is generalized
Wiener measure. Below we shall outline the basics of the Malliavin
derivative, and we shall specify the two Hilbert spaces corresponding
to the setting of \thmref{TPolar}. We also stress that the literature
on Malliavin calculus and its applications is vast, see e.g., \cite{MR2052267,MR3424704}.

\textbf{Settings.} It will be convenient for us to work with the \emph{real}
Hilbert spaces. 

Let $\left(\Omega,\mathcal{F},\mathbb{P},\Phi\right)$ be as specified
in \defref{GH}, i.e., we consider the Gaussian field $\Phi$. Fix
a \emph{real} Hilbert space $\mathscr{L}$ with $\dim\mathscr{L}=\aleph_{0}$.
Set $\mathscr{H}_{1}=L^{2}\left(\Omega,\mathbb{P}\right)$, and $\mathscr{H}_{2}=L^{2}\left(\Omega\rightarrow\mathscr{L},\mathbb{P}\right)=L^{2}\left(\Omega,\mathbb{P}\right)\otimes\mathscr{L}$,
i.e., vector valued random variables. 

For $\mathscr{H}_{1}$, the inner product $\left\langle \cdot,\cdot\right\rangle _{\mathscr{H}_{1}}$
is 
\begin{equation}
\left\langle F,G\right\rangle _{\mathscr{H}_{1}}=\int_{\Omega}FG\:d\mathbb{P}=\mathbb{E}\left(FG\right);\label{eq:no1}
\end{equation}
where $\mathbb{E}\left(\cdots\right)=\int_{\Omega}\left(\cdots\right)d\mathbb{P}$
is the mean or expectation functional.

On $\mathscr{H}_{2}$, we have the tensor product inner product: If
$F_{i}\in\mathscr{H}_{1}$, $k_{i}\in\mathscr{L}$, $i=1,2$, then
\begin{align}
\left\langle F_{1}\otimes k_{1},F_{2}\otimes k_{2}\right\rangle _{\mathscr{H}_{2}} & =\left\langle F_{1},F_{2}\right\rangle _{\mathscr{H}_{1}}\left\langle k_{1},k_{2}\right\rangle _{\mathscr{L}}\nonumber \\
 & =\mathbb{E}\left(F_{1}F_{2}\right)\left\langle k_{1},k_{2}\right\rangle _{\mathscr{L}}.\label{eq:no2}
\end{align}
Equivalently, if $\psi_{i}:\Omega\longrightarrow\mathscr{L}$, $i=1,2$,
are measurable functions on $\Omega$, we set 
\begin{equation}
\left\langle \psi_{1},\psi_{2}\right\rangle _{\mathscr{H}_{2}}=\int_{\Omega}\left\langle \psi_{1}\left(\omega\right),\psi_{2}\left(\omega\right)\right\rangle _{\mathscr{L}}d\mathbb{P}\left(\omega\right);\label{eq:no3}
\end{equation}
where it is assumed that 
\begin{equation}
\int_{\Omega}\left\Vert \psi_{i}\left(\omega\right)\right\Vert _{\mathscr{L}}^{2}d\mathbb{P}\left(\omega\right)<\infty,\quad i=1,2.\label{eq:no4}
\end{equation}

\begin{rem}
In the special case of standard Brownian motion, we have $\mathscr{L}=L^{2}\left(0,\infty\right)$,
and set $\Phi\left(h\right)=\int_{0}^{\infty}h\left(t\right)d\Phi_{t}$
(= the It\={o}-integral), for all $h\in\mathscr{L}$. Recall we then
have 
\begin{equation}
\mathbb{E}\left(\left|\Phi\left(h\right)\right|^{2}\right)=\int_{0}^{\infty}\left|h\left(t\right)\right|^{2}dt,\label{eq:mc1}
\end{equation}
or equivalently (the It\={o}-isometry),
\begin{equation}
\left\Vert \Phi\left(h\right)\right\Vert _{L^{2}\left(\Omega,\mathbb{P}\right)}=\left\Vert h\right\Vert _{\mathscr{L}},\quad\forall h\in\mathscr{L}.\label{eq:mc2}
\end{equation}
The consideration above also works in the context of general Gaussian
fields; see \subref{SI}.\end{rem}
\begin{defn}
\label{def:MD}Let $\mathscr{D}$ be the dense subspace in $\mathscr{H}_{1}=L^{2}\left(\Omega,\mathbb{P}\right)$
as in \defref{poly}. The operator $T:\mathscr{H}_{1}\longrightarrow\mathscr{H}_{2}$
(= Malliavin derivative) with $dom\left(T\right)=\mathscr{D}$ is
specified as follows: 

For $F\in\mathscr{D}$, i.e., $\exists n\in\mathbb{N}$, $p\left(x_{1},\cdots,x_{n}\right)$
a polynomial in $n$ real variables, and $h_{1},h_{2},\cdots,h_{n}\in\mathscr{L}$,
where 
\begin{equation}
F=p\left(\Phi\left(h_{1}\right),\cdots,\Phi\left(h_{n}\right)\right)\in L^{2}\left(\Omega,\mathbb{P}\right).\label{eq:mc3}
\end{equation}
Set
\begin{equation}
T\left(F\right)=\sum_{j=1}^{n}\left(\frac{\partial}{\partial x_{j}}p\right)\left(\Phi\left(h_{1}\right),\cdots,\Phi\left(h_{n}\right)\right)\otimes h_{j}\in\mathscr{H}_{2}.\label{eq:mc4}
\end{equation}

\end{defn}

In the following two remarks we outline the argument for why the expression
for $T(F)$ in (\ref{eq:mc4}) is independent of the chosen representation
(\ref{eq:mc3}) for the particular $F$. Recall that $F$ is in the
domain $\mathscr{D}$ of $T$. Without some careful justification,
it is not even clear that $T$, as given, defines a linear operator
on its dense domain $\mathscr{D}$. The key steps in the argument
to follow will be the result (\ref{eq:mc5}) in \thmref{MD} below,
and the discussion to follow. 

There is an alternative argument, based instead on \corref{li}; see
also \secref{con} below.
\begin{rem}
\label{rem:MD}It is non-trivial that the formula in (\ref{eq:mc4})
defines a linear operator. Reason: On the LHS in (\ref{eq:mc4}),
the representation of $F$ from (\ref{eq:mc3}) is not unique. So
we must show that $p\left(\Phi\left(h_{1}\right),\cdots,\Phi\left(h_{n}\right)\right)=0$
$\Longrightarrow$ $\text{RHS}_{\left(\ref{eq:mc4}\right)}=0$ as
well. (The dual pair analysis below (see Def. \ref{def:pair}) is
good for this purpose.)

Suppose $F\in\mathscr{D}$ has two representations corresponding to
systems of vectors $h_{1},\cdots,h_{n}\in\mathscr{L}$, and $k_{1},\cdots,k_{m}\in\mathscr{L}$,
with polynomials $p\in\mathbb{R}\left[x_{1},\cdots,x_{n}\right]$,
and $q\in\mathbb{R}\left[x_{1},\cdots,x_{m}\right]$, where 
\begin{equation}
F=p\left(\Phi\left(h_{1}\right),\cdots,\Phi\left(h_{n}\right)\right)=q\left(\Phi\left(k_{1}\right),\cdots,\Phi\left(k_{m}\right)\right).\label{eq:F1}
\end{equation}
We must then verify the identity:
\begin{equation}
\sum_{i=1}^{n}\frac{\partial p}{\partial x_{i}}\left(\Phi\left(h_{1}\right),\cdots,\Phi\left(h_{n}\right)\right)\otimes h_{i}=\sum_{i=1}^{m}\frac{\partial q}{\partial x_{i}}\left(\Phi\left(k_{1}\right),\cdots,\Phi\left(k_{m}\right)\right)\otimes k_{i}.\label{eq:F2}
\end{equation}

The significance of the next result is the implication (\ref{eq:F1})
$\Longrightarrow$ (\ref{eq:F2}), valid for all choices of representations
of the same $F\in\mathscr{D}$. The conclusion from (\ref{eq:mc5})
in \thmref{MD} is that the following holds for all $l\in\mathscr{L}$:
\[
\mathbb{E}\left(\left\langle \mbox{LHS}_{\left(\ref{eq:F2}\right)},l\right\rangle \right)=\mathbb{E}\left(\left\langle \mbox{RHS}_{\left(\ref{eq:F2}\right)},l\right\rangle \right)=\mathbb{E}\left(F\Phi\left(l\right)\right).
\]
Moreover, with a refinement of the argument, we arrive at the identity
\[
\left\langle \mbox{LHS}_{\left(\ref{eq:F2}\right)}-\mbox{RHS}_{\left(\ref{eq:F2}\right)},G\otimes l\right\rangle _{\mathscr{H}_{2}}=0,
\]
valid for all $G\in\mathscr{D}$, and all $l\in\mathscr{L}$. 

But $span\left\{ G\otimes l\mid G\in\mathscr{D},\:l\in\mathscr{L}\right\} $
is dense in $\mathscr{H}_{2}\left(=L^{2}\left(\mathbb{P}\right)\otimes\mathscr{L}\right)$
w.r.t. the tensor-Hilbert norm in $\mathscr{H}_{2}$ (see (\ref{eq:no2}));
and we get the desired identity (\ref{eq:F2}) for any two representations
of $F$. 
\end{rem}

\begin{rem}
An easy case where (\ref{eq:F1}) $\Longrightarrow$ (\ref{eq:F2})
can be verified ``by hand'':

Let $F=\Phi\left(h\right)^{2}$ with $h\in\mathscr{L}\backslash\left\{ 0\right\} $
fixed. We can then pick the two systems $\left\{ h\right\} $ and
$\left\{ h,h\right\} $ with $p\left(x\right)=x^{2}$, and $q\left(x_{1},x_{2}\right)=x_{1}x_{2}$.
A direct calculus argument shows that $\mbox{LHS}_{\left(\ref{eq:F2}\right)}=\mbox{RHS}_{\left(\ref{eq:F2}\right)}=2\Phi\left(h\right)\otimes h\in\mathscr{H}_{2}$.
\end{rem}
We now resume the argument for the general case. 
\begin{defn}[symmetric pair]
\label{def:sp} For $i=1,2$, let $\mathscr{H}_{i}$ be two Hilbert
spaces, and suppose $\mathscr{D}_{i}\subset\mathscr{H}_{i}$ are given
dense subspaces. 

We say that a pair of operators $\left(S,T\right)$ forms a \emph{symmetric
pair} if $dom\left(T\right)=\mathscr{D}_{1}$, and $dom\left(S\right)=\mathscr{D}_{2}$;
and moreover, 
\begin{equation}
\left\langle Tu,v\right\rangle _{\mathscr{H}_{2}}=\left\langle u,Sv\right\rangle _{\mathscr{H}_{1}}\label{eq:sp1}
\end{equation}
holds for $\forall u\in\mathscr{D}_{1}$, $\forall v\in\mathscr{D}_{2}$. 

It is immediate that (\ref{eq:sp1}) may be rewritten in the form
of containment of graphs:
\[
T\subset S^{*},\quad S\subset T^{*}.
\]
In that case, both $S$ and $T$ are \emph{closable}. We say that
a symmetric pair is \emph{maximal} if $\overline{T}=S^{*}$ and $\overline{S}=T^{*}$.
\[
\xymatrix{\mathscr{H}_{1}\ar@/^{1.2pc}/[rr]^{T} &  & \mathscr{H}_{2}\ar@/^{1.2pc}/[ll]^{S}}
\]

\end{defn}
We will establish the following two assertions: 
\begin{enumerate}
\item Indeed $T$ from \defref{MD} is a well-defined linear operator from
$\mathscr{H}_{1}$ to $\mathscr{H}_{2}$ .
\item Moreover, $\left(S,T\right)$ is a maximal symmetric pair (see Definitions
\ref{def:sp}, \ref{def:pair}).\end{enumerate}
\begin{defn}
\label{def:pair}Let $\mathscr{H}_{1}\xrightarrow{\;T\;}\mathscr{H}_{2}$
be the Malliavin derivative with $\mathscr{D}_{1}=dom\left(T\right)$,
see \defref{MD}. Set $\mathscr{D}_{2}=\mathscr{D}_{1}\otimes\mathscr{L}$
= algebraic tensor product, and on $dom\left(S\right)=\mathscr{D}_{2}$,
set 
\[
S\left(F\otimes k\right)=-\left\langle T\left(F\right),k\right\rangle +M_{\Phi\left(k\right)}F,\;\forall F\otimes k\in\mathscr{D}_{2},
\]
where $M_{\Phi\left(k\right)}$ = the operator of multiplication by
$\Phi\left(k\right)$. 
\end{defn}
Note that both operators $S$ and $T$ are linear and well defined
on their respective dense domains, $\mathscr{D}_{i}\subset\mathscr{H}_{i}$,
$i=1,2$. For density, see \lemref{dense}.

It is a \textquotedblleft modern version\textquotedblright{} of ideas
in the literature on analysis of Gaussian processes; but we are adding
to it, giving it a twist in the direction of multi-variable operator
theory, representation theory, and especially to representations of
infinite-dimensional algebras on generators and relations. Moreover
our results apply to more general Gaussian processes than covered
so far.
\begin{lem}
\label{lem:ST}Let $\left(S,T\right)$ be the pair of operators specified
above in \defref{pair}. Then it is a symmetric pair, i.e., 
\[
\left\langle Tu,v\right\rangle _{\mathscr{H}_{2}}=\left\langle u,Sv\right\rangle _{\mathscr{H}_{1}},\quad\forall u\in\mathscr{D}_{1},\;\forall v\in\mathscr{D}_{2}.
\]
Equivalently, 
\[
\left\langle T\left(F\right),G\otimes k\right\rangle _{\mathscr{H}_{2}}=\left\langle F,S\left(G\otimes k\right)\right\rangle _{\mathscr{H}_{1}},\quad\forall F,G\in\mathscr{D},\;\forall k\in\mathscr{L}.
\]

In particular, we have $S\subset T^{*}$, and $T\subset S^{*}$(containment
of graphs.) Moreover, the two operators $S^{*}\overline{S}$ and $T^{*}\overline{T}$
are selfadjoint. (For the last conclusion in the lemma, see \thmref{TPolar}.)\end{lem}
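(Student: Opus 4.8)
The plan is to collapse the claimed inner-product identity onto a single Gaussian integration-by-parts formula. Writing $D_kF:=\langle T(F),k\rangle_{\mathscr{L}}\in\mathscr{H}_1$ for the directional derivative, so that $D_kF=\sum_j\partial_jp(\Phi(h_1),\dots,\Phi(h_n))\langle h_j,k\rangle_{\mathscr{L}}$ when $F=p(\Phi(h_1),\dots,\Phi(h_n))$, I would first expand the two sides. Using (\ref{eq:no2}),
\[ \langle T(F),G\otimes k\rangle_{\mathscr{H}_2}=\mathbb{E}\big((D_kF)\,G\big), \]
and, from \defref{pair} together with (\ref{eq:no1}),
\[ \langle F,S(G\otimes k)\rangle_{\mathscr{H}_1}=-\mathbb{E}\big(F\,(D_kG)\big)+\mathbb{E}\big(FG\,\Phi(k)\big). \]
Hence the symmetric-pair identity is equivalent to
\[ \mathbb{E}\big((D_kF)\,G\big)+\mathbb{E}\big(F\,(D_kG)\big)=\mathbb{E}\big(FG\,\Phi(k)\big). \]

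Next, putting $F$ and $G$ over a common finite system of vectors, the operator $D_k$ is a derivation on the polynomial algebra $\mathscr{D}$, i.e. $D_k(FG)=(D_kF)\,G+F\,(D_kG)$, and $FG\in\mathscr{D}$ again. So the left-hand side above is $\mathbb{E}(D_k(FG))$ and the whole statement reduces to the single integration-by-parts identity
\[ \mathbb{E}(D_k\Psi)=\mathbb{E}\big(\Phi(k)\,\Psi\big),\qquad\forall\,\Psi\in\mathscr{D}. \]
This is the heart of the matter; the rest is bookkeeping.

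To prove it, given $\Psi=\psi(\Phi(l_1),\dots,\Phi(l_m))$ I would pick an orthonormal system $\{e_j\}_{j=1}^N\subset\mathscr{L}$ spanning $\{l_1,\dots,l_m\}$ and the projection of $k$ onto that span, and split $k=k_\parallel+k_\perp$. By (\ref{eq:jg}) the variables $X_j:=\Phi(e_j)$ are i.i.d. standard Gaussians, while $\Phi(k_\perp)$ is independent of $(X_1,\dots,X_N)$ with mean zero; since also $\langle k_\perp,l_i\rangle=0$, the $k_\perp$-part contributes $0$ to both sides and I may assume $k=\sum_j\langle k,e_j\rangle e_j$. Rewriting $\Psi$ as a polynomial in $X_1,\dots,X_N$ and using $D_k=\sum_j\langle k,e_j\rangle\,\partial_{x_j}$, the claim reduces to the coordinatewise one-dimensional rule $\mathbb{E}(\partial_{x_j}\Psi)=\mathbb{E}(X_j\Psi)$, which is the classical Gaussian integration by parts (integrate against $e^{-x_j^2/2}$, using $x\,e^{-x^2/2}=-\tfrac{d}{dx}e^{-x^2/2}$ and the rapid decay of the Gaussian). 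Summing in $j$ gives $\mathbb{E}(D_k\Psi)=\mathbb{E}(\Phi(k)\Psi)$. Because the value $\langle F,S(G\otimes k)\rangle_{\mathscr{H}_1}$ depends only on $F$ as an element of $L^2(\mathbb{P})$, this same computation shows \emph{a posteriori} that $\langle T(F),G\otimes k\rangle_{\mathscr{H}_2}$ is independent of the representation chosen for $F$, which is the point of \remref{MD}.

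Finally, the identity just proved says exactly that $T\subset S^*$ and $S\subset T^*$ as graphs. In particular $dom(T^*)\supset\mathscr{D}_2$ and $dom(S^*)\supset\mathscr{D}_1$ are dense, so \lemref{Tadjoint} makes both $T$ and $S$ closable; applying \thmref{TPolar} to $T\colon\mathscr{H}_1\to\mathscr{H}_2$ yields selfadjointness of $T^*\overline{T}$, and to $S\colon\mathscr{H}_2\to\mathscr{H}_1$ yields selfadjointness of $S^*\overline{S}$. I expect the one genuinely delicate point to be the integration-by-parts step: specifically the orthogonal splitting of $k$ and the reduction to independent standard Gaussians, after which everything is the textbook one-variable computation.
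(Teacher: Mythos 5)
Your proposal is correct and follows essentially the same route the paper takes: the paper leaves Lemma \ref{lem:ST} without a standalone proof, but its justification is exactly your chain --- the Leibniz rule of \lemref{md} reduces the symmetric-pair identity to $\mathbb{E}\left(T_{k}\left(FG\right)\right)=\mathbb{E}\left(\Phi\left(k\right)FG\right)$, which is the Gaussian integration-by-parts formula (\ref{eq:mc5}) proved in \thmref{MD} by orthonormalizing the Gram matrix and using $x_{1}g_{n}=-\partial_{x_{1}}g_{n}$, after which closability and the selfadjointness of $T^{*}\overline{T}$ and $S^{*}\overline{S}$ follow from \lemref{Tadjoint} and \thmref{TPolar}. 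Your explicit handling of the component $k_{\perp}$ orthogonal to the span of the $l_{i}$ (using independence and mean zero) is a small refinement of the paper's Step 2, and your closing remark on representation-independence of $T\left(F\right)$ is the argument of Remark \ref{rem:MD}.
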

\begin{thm}
\label{thm:MD}Let $T:\mathscr{H}_{1}\longrightarrow\mathscr{H}_{2}$
be the Malliavin derivative, i.e., $T$ is an unbounded closable operator
with dense domain $\mathscr{D}$ consisting of the span of all the
functions $F$ from (\ref{eq:mc3}). Then, for all $F\in dom\left(T\right)$,
and $k\in\mathscr{L}$, we have 
\begin{equation}
\mathbb{E}\left(\left\langle T\left(F\right),k\right\rangle _{\mathscr{L}}\right)=\mathbb{E}\left(F\Phi\left(k\right)\right).\label{eq:mc5}
\end{equation}
\end{thm}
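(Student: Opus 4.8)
The plan is to verify (\ref{eq:mc5}) directly for a fixed representation $F=p(\Phi(h_1),\dots,\Phi(h_n))$ as in (\ref{eq:mc3}), reducing the whole computation to the classical one–dimensional Gaussian integration–by–parts identity. Unwinding the definition (\ref{eq:mc4}) of $T$ and pairing with $k$, the left–hand side of (\ref{eq:mc5}) is $\sum_{j=1}^{n}\langle h_j,k\rangle_{\mathscr{L}}\,\mathbb{E}\big((\partial_j p)(\Phi(h_1),\dots,\Phi(h_n))\big)$, while the right–hand side is $\mathbb{E}\big(p(\Phi(h_1),\dots,\Phi(h_n))\,\Phi(k)\big)$. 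Thus the theorem is exactly the statement that a Gaussian field obeys a Stein–type integration by parts. All the expectations in sight are finite, since polynomials in jointly Gaussian variables have moments of every order.

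First I would remove any linear dependence among $\{h_1,\dots,h_n,k\}$ by passing to an orthonormal basis. Let $V=\mathrm{span}\{h_1,\dots,h_n,k\}$, pick an ONB $\{e_a\}_{a=1}^{d}$ of $V$, and set $X_a:=\Phi(e_a)$. By (\ref{eq:jg}) the vector $(X_1,\dots,X_d)$ is jointly Gaussian with covariance $\langle e_a,e_b\rangle=\delta_{ab}$, so the $X_a$ are i.i.d.\ standard normal (uncorrelated jointly Gaussian variables are independent). Writing $h_j=\sum_a A_{ja}e_a$ and $k=\sum_a b_a e_a$, linearity of $\Phi$ gives $\Phi(h_j)=\sum_a A_{ja}X_a$ and $\Phi(k)=\sum_a b_a X_a$, whence $F=P(X_1,\dots,X_d)$ for the polynomial $P(x)=p\big((Ax)_1,\dots,(Ax)_n\big)$. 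This step replaces the possibly singular Gramian $G_n$ of Remark \ref{rem:gauss} by the identity covariance, and so handles the degenerate (linearly dependent) case for free.

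Next I would apply the scalar identity coordinatewise. For a standard normal one has $\mathbb{E}\big(X_a\,P(X)\big)=\mathbb{E}\big((\partial_a P)(X)\big)$, which follows from $\tfrac{d}{dx}e^{-x^{2}/2}=-x\,e^{-x^{2}/2}$, integration by parts in the $a$-th variable, and Fubini, the boundary terms vanishing because polynomials are dominated by the Gaussian decay. Combining this with $\Phi(k)=\sum_a b_a X_a$ gives $\mathbb{E}(F\,\Phi(k))=\sum_a b_a\,\mathbb{E}\big((\partial_a P)(X)\big)$. Finally the chain rule $\partial_a P(x)=\sum_j (\partial_j p)(Ax)\,A_{ja}$ together with orthonormality of $\{e_a\}$, which yields $\sum_a b_a A_{ja}=\langle h_j,k\rangle_{\mathscr{L}}$, recombines everything into $\sum_j \langle h_j,k\rangle_{\mathscr{L}}\,\mathbb{E}\big((\partial_j p)(\Phi(h_1),\dots,\Phi(h_n))\big)$, which is precisely the left–hand side of (\ref{eq:mc5}).

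The computation is largely bookkeeping once the ONB reduction is in place; the only genuine analytic input is the one–dimensional integration by parts, and the only things needing care are the vanishing of boundary terms and the interchange of expectation with the finite sum, both immediate for polynomials against a Gaussian weight. I expect the main obstacle to be purely organizational: keeping the two indexing systems — the original vectors $h_j$ and the orthonormal $e_a$ — straight through the chain–rule step, so that the covariance pairings $\langle h_j,k\rangle_{\mathscr{L}}$ reappear correctly. As a consistency check, the same identity also drops out of \corref{li}, with the algebraic commutator $[a(k),p(a^{*}(h_1),\dots,a^{*}(h_n))]$ replaced by its Gaussian/stochastic counterpart; this is the alternative route flagged just before \defref{pair}.
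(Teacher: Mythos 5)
Your proposal is correct and follows essentially the same route as the paper's proof: reduce to an orthonormal system by a linear change of variables (the paper's Steps 1--2, invoking invariance of the polynomial ring under matrix substitution), then apply the one-dimensional Gaussian integration-by-parts identity $x\,g=-g'$ (the paper's Steps 3--4). Your version is marginally more careful in that you orthonormalize the span of $\{h_1,\dots,h_n,k\}$ and track the change of basis through the chain rule, rather than assuming $k=h_1$ outright, but the core argument is identical.
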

\begin{proof}
We shall prove (\ref{eq:mc5}) in several steps. Once (\ref{eq:mc5})
is established, then there is a recursive argument which yields a
dense subspace in $\mathscr{H}_{2}$, contained in $dom\left(T^{*}\right)$;
and so $T$ is closable. 

Moreover, formula (\ref{eq:mc5}) yields directly the evaluation of
$T^{*}:\mathscr{H}_{2}\longrightarrow\mathscr{H}_{1}$ as follows:
If $k\in\mathscr{L}$, set $\mathbbm{1}\otimes k\in\mathscr{H}_{2}$
where  $\mathbbm{1}$ denotes the constant function ``one'' on $\Omega$.
We get 
\begin{equation}
T^{*}\left(\mathbbm{1}\otimes k\right)=\Phi\left(k\right)=\int_{0}^{\infty}k\left(t\right)d\Phi_{t}\left(=\mbox{the It\={o}-integral}.\right)\label{eq:mc6}
\end{equation}
The same argument works for any Gaussian field; see \defref{GH}.
We refer to the literature \cite{MR2052267,MR3424704} for details.

The proof of (\ref{eq:mc5}) works for any Gaussian process $\mathscr{L}\ni k\longrightarrow\Phi\left(k\right)$
indexed by an arbitrary Hilbert space $\mathscr{L}$ with the inner
product $\left\langle k,l\right\rangle _{\mathscr{L}}$ as the covariance
kernel. 

Formula (\ref{eq:mc5}) will be established as follows: Let $F$ and
$T\left(F\right)$ be as in (\ref{eq:mc3})-(\ref{eq:mc4}).

\textbf{Step 1.} For every $n\in\mathbb{N}$, the polynomial ring
$\mathbb{R}\left[x_{1},x_{2},\cdots,x_{n}\right]$ is invariant under
matrix substitution $y=Mx$, where $M$ is an $n\times n$ matrix
over $\mathbb{R}$. 

\textbf{Step 2.} Hence, in considering (\ref{eq:mc5}) for $\left\{ h_{i}\right\} _{i=1}^{n}\subset\mathscr{L}$,
$h_{1}=k$, we may diagonalize the $n\times n$ Gram matrix $\left(\left\langle h_{i},h_{j}\right\rangle \right)_{i,j=1}^{n}$;
thus without loss of generality, we may assume that the system $\left\{ h_{i}\right\} _{i=1}^{n}$
is orthogonal and normalized, i.e., that 
\begin{equation}
\left\langle h_{i},h_{j}\right\rangle =\delta_{ij},\;\forall i,j\in\left\{ 1,\cdots,n\right\} ,\label{eq:p5}
\end{equation}
and we may take $k=h_{1}$ in $\mathscr{L}$. 

\textbf{Step 3.} With this simplification, we now compute the LHS
in (\ref{eq:mc5}). We note that the joint distribution of $\left\{ \Phi\left(h_{i}\right)\right\} _{i=1}^{n}$
is thus the standard Gaussian kernel in $\mathbb{R}^{n}$, i.e., 
\begin{equation}
g_{n}\left(x\right)=\left(2\pi\right)^{-n/2}e^{-\frac{1}{2}\sum_{i=1}^{n}x_{i}^{2}},\label{eq:p6}
\end{equation}
with $x=\left(x_{1},\cdots,x_{n}\right)\in\mathbb{R}^{n}$. We have
\begin{equation}
x_{1}g_{n}\left(x\right)=-\frac{\partial}{\partial x_{1}}g_{n}\left(x\right)\label{eq:p7}
\end{equation}
by calculus. 

\textbf{Step 4.} A direct computation yields 
\begin{eqnarray*}
\mbox{LHS}_{\left(\ref{eq:mc5}\right)} & = & \mathbb{E}\left(\left\langle T\left(F\right),h_{1}\right\rangle _{\mathscr{L}}\right)\\
 & \underset{\text{by \ensuremath{\left(\ref{eq:p5}\right)}}}{=} & \mathbb{E}\left(\frac{\partial p}{\partial x_{1}}\left(\Phi\left(h_{1}\right),\cdots\Phi\left(h_{n}\right)\right)\right)\\
 & \underset{\text{by \ensuremath{\left(\ref{eq:p6}\right)}}}{=} & \int_{\mathbb{R}^{n}}\frac{\partial p}{\partial x_{1}}\left(x_{1},\cdots,x_{n}\right)g_{n}\left(x_{1},\cdots,x_{n}\right)dx_{1}\cdots dx_{n}\\
 & \underset{\text{int. by parts}}{=} & -\int_{\mathbb{R}^{n}}p\left(x_{1},\cdots,x_{n}\right)\frac{\partial g_{n}}{\partial x_{1}}\left(x_{1},\cdots,x_{n}\right)dx_{1}\cdots dx_{n}\\
 & \underset{\text{by \ensuremath{\left(\ref{eq:p7}\right)}}}{=} & \int_{\mathbb{R}^{n}}x_{1}p\left(x_{1},\cdots,x_{n}\right)g_{n}\left(x_{1},\cdots,x_{n}\right)dx_{1}\cdots dx_{n}\\
 & \underset{\text{by \ensuremath{\left(\ref{eq:p5}\right)}}}{=} & \mathbb{E}\left(\Phi\left(h_{1}\right)p\left(\Phi\left(h_{1}\right),\cdots,\Phi\left(h_{n}\right)\right)\right)\\
 & = & \mathbb{E}\left(\Phi\left(h_{1}\right)F\right)=\mbox{RHS}_{\left(\ref{eq:mc5}\right)},
\end{eqnarray*}
which is the desired conclusion (\ref{eq:mc5}).
\end{proof}

\begin{cor}
Let $\mathscr{H}_{1}$, $\mathscr{H}_{2}$, and $\mathscr{H}_{1}\xrightarrow{\;T\;}\mathscr{H}_{2}$
be as in \thmref{MD}, i.e., $T$ is the Malliavin derivative. Then,
for all $h,k\in\mathscr{L}=L^{2}\left(0,\infty\right)$, we have for
the closure $\overline{T}$ of $T$ the following:
\begin{gather}
\overline{T}(e^{\Phi\left(h\right)})=e^{\Phi\left(h\right)}\otimes h,\quad\mbox{and}\label{eq:Texp1}\\
\mathbb{E}\big(\langle\overline{T}(e^{\Phi\left(h\right)}),k\rangle_{\mathscr{L}}\big)=e^{\frac{1}{2}\left\Vert h\right\Vert _{\mathscr{L}}^{2}}\left\langle h,k\right\rangle _{\mathscr{L}}.\label{eq:Texp2}
\end{gather}
Here $\overline{T}$ denotes the graph-closure of $T$. 

Moreover, 
\begin{equation}
T^{*}\overline{T}(e^{\Phi\left(k\right)})=\left(\Phi\left(k\right)-\left\Vert k\right\Vert _{\mathscr{L}}^{2}\right)e^{\Phi\left(k\right)}.\label{eq:Texp3}
\end{equation}
\end{cor}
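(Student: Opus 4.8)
The plan is to read off all three formulas by approximating the non-polynomial vector $e^{\Phi(h)}$ by the partial sums of its Taylor series, each of which lies in the polynomial domain $\mathscr{D}$ where $T$ and $S$ act through the explicit formulas of \defref{MD} and \defref{pair}, and then passing to the graph closures. The unifying tool is that each $\Phi(h)$ is a centered Gaussian of variance $\left\Vert h\right\Vert _{\mathscr{L}}^{2}$ (by (\ref{eq:e1})), hence has all exponential moments; this is what makes the relevant $L^{2}$-limits legitimate.

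For (\ref{eq:Texp1}), set $p_{N}(x)=\sum_{n=0}^{N}x^{n}/n!$ and $F_{N}=p_{N}(\Phi(h))\in\mathscr{D}$. Since $p_{N}'=p_{N-1}$, \defref{MD} gives $T(F_{N})=p_{N-1}(\Phi(h))\otimes h$. The elementary bound $\left|e^{x}-p_{N}(x)\right|\le 2e^{\left|x\right|}$, together with $\mathbb{E}\big(e^{2\left|\Phi(h)\right|}\big)<\infty$, supplies a dominating function, so dominated convergence yields $F_{N}\to e^{\Phi(h)}$ in $\mathscr{H}_{1}$ and $p_{N-1}(\Phi(h))\otimes h\to e^{\Phi(h)}\otimes h$ in $\mathscr{H}_{2}$. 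Thus the graph pairs $(F_{N},TF_{N})\in\mathscr{G}(T)$ converge in $\mathscr{H}_{1}\oplus\mathscr{H}_{2}$ to $(e^{\Phi(h)},e^{\Phi(h)}\otimes h)$, which therefore lies in $\overline{\mathscr{G}(T)}=\mathscr{G}(\overline{T})$; this is precisely (\ref{eq:Texp1}). Formula (\ref{eq:Texp2}) then follows immediately: pairing (\ref{eq:Texp1}) with $k$ and taking expectation gives $\left\langle h,k\right\rangle _{\mathscr{L}}\,\mathbb{E}(e^{\Phi(h)})$, and the Gaussian moment generating function gives $\mathbb{E}(e^{\Phi(h)})=e^{\frac{1}{2}\left\Vert h\right\Vert _{\mathscr{L}}^{2}}$. (Alternatively one may invoke (\ref{eq:mc5}), extended to $\overline{T}$, and evaluate $\mathbb{E}(e^{\Phi(h)}\Phi(k))$ directly.)

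For (\ref{eq:Texp3}) I would run the same approximation through the partner operator $S$ of \defref{pair}. With $F_{N}=p_{N}(\Phi(k))$, the vectors $F_{N}\otimes k$ lie in $\mathscr{D}_{2}=dom(S)$, and since $T(F_{N})=p_{N-1}(\Phi(k))\otimes k$ the defining formula gives $S(F_{N}\otimes k)=-\left\Vert k\right\Vert _{\mathscr{L}}^{2}\,p_{N-1}(\Phi(k))+\Phi(k)\,p_{N}(\Phi(k))$. The same exponential-moment domination, now also using $\mathbb{E}\big(\left|\Phi(k)\right|^{2}e^{2\left|\Phi(k)\right|}\big)<\infty$, shows $F_{N}\otimes k\to e^{\Phi(k)}\otimes k$ in $\mathscr{H}_{2}$ and $S(F_{N}\otimes k)\to(\Phi(k)-\left\Vert k\right\Vert _{\mathscr{L}}^{2})e^{\Phi(k)}$ in $\mathscr{H}_{1}$. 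Because $S\subset T^{*}$ (\lemref{ST}) and every adjoint is closed, the limiting pair $\big(e^{\Phi(k)}\otimes k,\,(\Phi(k)-\left\Vert k\right\Vert _{\mathscr{L}}^{2})e^{\Phi(k)}\big)$ belongs to $\mathscr{G}(T^{*})$. Combining this with (\ref{eq:Texp1}) identifies $T^{*}\overline{T}(e^{\Phi(k)})=T^{*}(e^{\Phi(k)}\otimes k)=(\Phi(k)-\left\Vert k\right\Vert _{\mathscr{L}}^{2})e^{\Phi(k)}$, which is (\ref{eq:Texp3}); note $\langle e^{\Phi(k)}\otimes k,k\rangle_{\mathscr{L}}=\left\Vert k\right\Vert _{\mathscr{L}}^{2}e^{\Phi(k)}$ accounts for the constant term.

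The main technical obstacle is not any single computation but the repeated passage to graph closures: one must be sure that convergence of the \emph{pairs} $(F_{N},TF_{N})$, rather than mere pointwise or $L^{2}$ convergence of the functions alone, is what legitimately reads off the action of the closed operators $\overline{T}$ and $T^{*}$ on the non-polynomial vectors $e^{\Phi(h)}$. Each such step rests on the uniform dominating bounds coming from the exponential moments of the Gaussians $\Phi(h)$, so the care lies in verifying those integrability estimates and in invoking closedness of $\overline{T}$ and of the adjoint $T^{*}$ at the right moments.
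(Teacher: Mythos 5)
Your proposal is correct, and for (\ref{eq:Texp1})--(\ref{eq:Texp2}) it is essentially the paper's own argument: approximate $e^{x}$ by its Taylor partial sums, note that each $p_{N}(\Phi(h))$ lies in $\mathscr{D}$ with $T(p_{N}(\Phi(h)))=p_{N-1}(\Phi(h))\otimes h$, and pass to the graph closure; your explicit dominating bound $|e^{x}-p_{N}(x)|\le 2e^{|x|}$ together with the finiteness of Gaussian exponential moments is exactly the detail the paper leaves implicit, and (\ref{eq:Texp2}) then falls out of $\mathbb{E}(e^{\Phi(h)})=e^{\frac{1}{2}\Vert h\Vert^{2}}$ as in the paper. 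Where you genuinely diverge is (\ref{eq:Texp3}): the paper's proof there consists only of citing the two Gaussian moment identities $\mathbb{E}(e^{\Phi(k)})=e^{\frac{1}{2}\Vert k\Vert^{2}}$ and $\mathbb{E}(\Phi(k)e^{\Phi(k)})=\Vert k\Vert^{2}e^{\frac{1}{2}\Vert k\Vert^{2}}$, which by themselves only let one check the formula weakly (e.g., against constants); it does not spell out how $T^{*}$ is applied to the non-polynomial vector $e^{\Phi(k)}\otimes k$. Your route --- computing $S(F_{N}\otimes k)=-\Vert k\Vert^{2}p_{N-1}(\Phi(k))+\Phi(k)p_{N}(\Phi(k))$ from Definition \ref{def:pair}, passing to the limit, and using $S\subset T^{*}$ together with the closedness of the adjoint to conclude $T^{*}(e^{\Phi(k)}\otimes k)=(\Phi(k)-\Vert k\Vert_{\mathscr{L}}^{2})e^{\Phi(k)}$ --- is a complete and arguably cleaner justification, since it reuses the symmetric-pair machinery of Lemma \ref{lem:ST} rather than leaving the domain question for $T^{*}$ unaddressed. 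Your closing remark about needing convergence of the graph pairs, not just of the functions, is precisely the right point of care.
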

\begin{proof}
Eqs. (\ref{eq:Texp1})-(\ref{eq:Texp2}) follow immediately from (\ref{eq:mc5})
and a polynomial approximation to 
\[
e^{x}=\lim_{n\rightarrow\infty}\sum_{0}^{n}\frac{x^{j}}{j!},\quad x\in\mathbb{R};
\]
see (\ref{eq:mc3}). In particular, $e^{\Phi\left(h\right)}\in dom\left(\overline{T}\right)$,
and $\overline{T}\left(e^{\Phi\left(h\right)}\right)$ is well defined.

For (\ref{eq:Texp3}), we use the facts for the Gaussians: 
\[
\mathbb{E}(e^{\Phi\left(k\right)})=e^{\frac{1}{2}\left\Vert k\right\Vert ^{2}},\;\mbox{and}
\]
\[
\mathbb{E}(\Phi\left(k\right)e^{\Phi\left(k\right)})=\left\Vert k\right\Vert ^{2}e^{\frac{1}{2}\left\Vert k\right\Vert ^{2}}.
\]
\end{proof}
\begin{example}
Let $F=\Phi\left(k\right)^{k}$, $\left\Vert k\right\Vert =1$. We
have 
\begin{align*}
T\Phi\left(k\right)^{n} & =n\Phi\left(k\right)^{n-1}\otimes k\\
T^{*}T\Phi\left(k\right)^{n} & =-n\left(n-1\right)\Phi\left(k\right)^{n-2}+n\Phi\left(k\right)^{n}
\end{align*}
and similarly, 
\begin{align*}
\overline{T}e^{\Phi\left(k\right)} & =e^{\Phi\left(k\right)}\otimes k\\
T^{*}\overline{T}e^{\Phi\left(k\right)} & =e^{\Phi\left(k\right)}\left(\Phi\left(k\right)-1\right).
\end{align*}

\end{example}
Let $\left(S,T\right)$ be the symmetric pair, we then have the inclusion
$\overline{T}\subset S^{*}$, i.e., containment of the operator graphs,
$\mathscr{G}\left(\overline{T}\right)\subset\mathscr{G}\left(S^{*}\right)$.
In fact, we have 
\begin{cor}
\label{cor:msym}$\overline{T}=S^{*}$.\end{cor}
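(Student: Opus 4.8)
The goal is to prove $\overline{T} = S^*$, i.e. that the symmetric pair $(S,T)$ is maximal in the sense of Definition \ref{def:sp}. We already know from Lemma \ref{lem:ST} that $T \subset S^*$, hence $\overline{T} \subset S^*$ since $S^*$ is closed. So the entire content is the reverse inclusion $S^* \subset \overline{T}$; equivalently, every element of $\dom(S^*)$ already lies in $\dom(\overline{T})$ with the same action. I would reduce this to a single density/adjoint computation rather than trying to compute $S^*$ in closed form.

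\textbf{Plan.} The cleanest route is to show that the graph of $S^*$ contains no vectors orthogonal to the graph of $\overline{T}$ beyond what is forced. Concretely, suppose $\psi \in \dom(S^*)$. I want to produce a sequence $F_m \in \mathscr{D}_1 = \dom(T)$ with $F_m \to S^* \psi$ in $\mathscr{H}_1$ and $T F_m \to \psi$ in $\mathscr{H}_2$, which is exactly the statement $\psi \in \dom(\overline T)$ and $\overline T \psi = S^*\psi$ (Wait — I should be careful about directions: $S^*:\mathscr{H}_1 \to \mathscr{H}_2$ and $\overline T: \mathscr{H}_1 \to \mathscr{H}_2$, so both map into $\mathscr{H}_2$; I want to show $\dom(S^*)\subset\dom(\overline T)$ with $\overline T = S^*$ there). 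The standard functional-analytic mechanism is: for densely defined closable operators between two Hilbert spaces, $\overline T = S^*$ holds if and only if $S \subset T^*$ \emph{and} $\overline T$ has adjoint exactly $S$, which by the double-adjoint theorem $T^* = (\overline T)^*$ amounts to showing $T^* \subset \overline S$, i.e.\ the reverse containment in the second inclusion of the symmetric pair. So I would aim to prove the companion statement $T^* = \overline S$ as well, and deduce $\overline T = (T^*)^* = (\overline S)^* = S^*$ by taking adjoints, using that adjoints of densely defined operators are automatically closed and that $(\overline A)^* = A^*$.

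\textbf{Key steps.} First I would record the abstract fact that for a closable densely defined $T$ one has $(\overline T)^* = T^*$ and $\overline{T} = T^{**}$ (this is standard and is implicit in Lemma \ref{lem:Tadjoint} and Theorem \ref{thm:TPolar}). Thus it suffices to show $S^* \subset T^{**} = \overline T$, and for that it suffices to show $T^* \subset \overline S$, because taking adjoints reverses inclusions: $T^* \subset \overline S \Rightarrow (\overline S)^* \subset T^{**} = \overline T$, and $(\overline S)^* = S^* $. Second, to prove $T^* \subset \overline S$ I would take $\psi \in \dom(T^*)$ and exhibit explicit approximants in $\dom(S) = \mathscr{D}_2 = \mathscr{D}_1 \otimes \mathscr{L}$ converging to $\psi$ in graph norm. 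Here the explicit formula $T^*(\mathbbm{1}\otimes k) = \Phi(k)$ from (\ref{eq:mc6}) together with the multiplication-operator formula defining $S$ in Definition \ref{def:pair} is what makes the approximation concrete: elements $F \otimes k$ with $F \in \mathscr{D}$ and $S(F\otimes k) = -\langle T(F),k\rangle + M_{\Phi(k)}F$ span enough of $\dom(T^*)$. The density of $\mathscr{D} = \mathscr{D}_1$ in $L^2(\mathbb P)$ (Lemma \ref{lem:dense}) and of $\mathscr{D}_1 \otimes \mathscr{L}$ in $\mathscr{H}_2$ (noted after Definition \ref{def:pair}) supplies the raw material.

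\textbf{Main obstacle.} The genuine difficulty is the graph-closure approximation in the second step: density of $\mathscr{D}_2$ in $\mathscr{H}_2$ is automatic, but I must approximate an arbitrary $\psi \in \dom(T^*)$ \emph{simultaneously} in the $\mathscr{H}_2$-norm and in the image under $T^*$ (equivalently control $S$ on the approximants), and the operator $S$ involves the unbounded multiplication $M_{\Phi(k)}$, so naive truncation need not converge in graph norm. The way I expect to handle this is to exploit the Fock/Wiener structure: under the unitary $W:\Gamma_{\mathrm{sym}}(\mathscr{L}) \to L^2(\mathbb P)$ of Lemma \ref{lem:dense}, the pair $(S,T)$ is conjugate to the pair of annihilation/creation-type operators on Fock space, whose closures are genuinely adjoint to one another on the symmetric Fock domain; one then transports the known maximality on Fock space back through $W$. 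Alternatively, and perhaps more self-containedly, I would use the selfadjointness of $T^*\overline T$ from Theorem \ref{thm:TPolar} and Lemma \ref{lem:ST}: since $T^*\overline T$ is selfadjoint and $S\overline{\phantom{x}}$-related to it, a number-operator/core argument shows $\dom(\overline T)$ and $\dom(S^*)$ share the common core of finite-degree polynomial fields, forcing equality of the two closed operators. The crux, and where care is needed, is verifying that this polynomial domain is a core for $S^*$ and not merely for $\overline T$.
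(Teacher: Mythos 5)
Your reductions are all correct as far as they go: $\overline{T}\subset S^{*}$ follows from Lemma \ref{lem:ST} together with closedness of adjoints, the equivalence $\overline{T}=S^{*}\Longleftrightarrow\overline{S}=T^{*}$ via $\overline{T}=T^{**}$ and $(\overline{S})^{*}=S^{*}$ is sound, and you have correctly located the entire difficulty in the reverse inclusion. But the argument stops exactly there: neither of your two proposed ways of closing the gap is carried out, and each is essentially the statement to be proved in disguise. Appealing to the ``known maximality'' of the annihilation/creation (gradient/divergence) pair on $\Gamma_{sym}\left(\mathscr{L}\right)$ and transporting it through $W$ is circular unless that maximality is proved, since it is the same assertion in the Fock picture. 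Likewise, your admitted crux --- that the polynomial fields $\mathscr{D}$ form a core for $S^{*}$ --- is not a lemma on the way to the corollary; it is equivalent to it, because $S^{*}$ restricted to $\mathscr{D}$ coincides with $T$, so ``$\mathscr{D}$ is a core for $S^{*}$'' says precisely $S^{*}=\overline{S^{*}|_{\mathscr{D}}}=\overline{T}$. Nor does the selfadjointness of $T^{*}\overline{T}$ from Theorem \ref{thm:TPolar} help: by von Neumann's theorem $T^{*}\overline{T}$ is selfadjoint for \emph{every} densely defined closable $T$, so it carries no information about whether $S^{*}$ is a proper closed extension of $\overline{T}$ (the minimal and maximal derivative operators on an interval give a symmetric pair that is not maximal, with both associated von Neumann operators selfadjoint).

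What is missing is a concrete analytic input that rules out a nonzero vector $\left(F,S^{*}F\right)$ in $\mathscr{G}\left(S^{*}\right)\ominus\mathscr{G}\left(\overline{T}\right)$. The paper supplies this by testing such an $F$ against the exponential vectors: $e^{\Phi\left(k\right)}\in dom\left(\overline{T}\right)$ with $\overline{T}\left(e^{\Phi\left(k\right)}\right)=e^{\Phi\left(k\right)}\otimes k$ by (\ref{eq:Texp1}), so graph-orthogonality gives $\mathbb{E}\left(e^{\Phi\left(k\right)}\left(F+\left\langle S^{*}F,k\right\rangle \right)\right)=0$ for all $k\in\mathscr{L}$; since the unknown $\left\langle S^{*}F,k\right\rangle $ also depends on $k$, density of $span\left\{ e^{\Phi\left(k\right)}\right\} $ alone does not finish the job, and the $k$-dependence is untangled by Girsanov's theorem, i.e.\ the quasi-invariance of $\mathbb{P}$ under the Cameron--Martin translations with density $e^{-\frac{1}{2}\left\Vert k\right\Vert ^{2}}e^{\Phi\left(k\right)}$, which upgrades the integrated identity to the pointwise relation $F+\left\langle S^{*}F,k\right\rangle =0$ a.e.\ for every $k$ and forces $F=0$. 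Some such mechanism --- this translation argument, or an honest essential-selfadjointness/graph-density proof on the $n$-particle grading of Fock space --- must appear for the proof to be complete; as written, your proposal identifies the obstacle but does not overcome it.
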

\begin{proof}
We will show that $\mathscr{G}\left(S^{*}\right)\ominus\mathscr{G}\left(\overline{T}\right)=0$,
where $\ominus$ stands for the orthogonal complement in the direct
sum-inner product of $\mathscr{H}_{1}\oplus\mathscr{H}_{2}$. Recall
that $\mathscr{H}_{1}=L^{2}\left(\Omega,\mathbb{P}\right)$, and $\mathscr{H}_{2}=\mathscr{H}_{1}\otimes\mathscr{L}$. 

Using (\ref{eq:Texp1}), we will prove that if $F\in dom\left(S^{*}\right)$,
and 
\[
\left\langle \begin{pmatrix}e^{\Phi\left(k\right)}\\
e^{\Phi\left(k\right)}\otimes k
\end{pmatrix},\begin{pmatrix}F\\
S^{*}F
\end{pmatrix}\right\rangle =0,\;\forall k\in\mathscr{L}\Longrightarrow F=0,
\]
which is equivalent to 
\begin{equation}
\mathbb{E}\left(e^{\Phi\left(k\right)}\left(F+\left\langle S^{*}F,k\right\rangle \right)\right)=0,\;\forall k\in\mathscr{L}.\label{eq:t1}
\end{equation}
But it is know that for the Gaussian filed, $span\left\{ e^{\Phi\left(k\right)}\mid k\in\mathscr{L}\right\} $
is dense in $\mathscr{H}_{1}$, and so (\ref{eq:t1}) implies that
$F=0$, which is the desired conclusion.

We can finish the proof of the corollary with an application of Girsanov's
theorem, see e.g., \cite{MR2052267} and \cite{MR2754322}. By this
result, we have a measurable action $\tau$ of $\mathscr{L}$ on $\left(\Omega,\mathcal{F},\mathbb{P}\right)$,
i.e.,
\begin{gather}
\begin{split}\begin{split}\mathscr{L}\end{split}
\xrightarrow{\;\tau\;}Aut\left(\Omega,\mathcal{F}\right)\\
\tau_{k}\circ\tau_{l}=\tau_{k+l}\quad\text{a.e. on }\Omega, & \;\forall k,l\in\mathscr{L}
\end{split}
\label{eq:a1}
\end{gather}
(see also sect \ref{sec:con} below) s.t. $\tau_{k}\left(\mathcal{F}\right)=\mathcal{F}$
for all $k\in\mathscr{L}$, and 
\[
\mathbb{P}\circ\tau_{k}^{-1}\ll\mathbb{P}
\]
with 
\begin{equation}
\frac{d\mathbb{P}\circ\tau_{k}^{-1}}{d\mathbb{P}}=e^{-\frac{1}{2}\left\Vert k\right\Vert _{\mathscr{L}}^{2}}e^{\Phi\left(k\right)},\quad\text{a.e. on }\Omega.\label{eq:a2}
\end{equation}

Returning to (\ref{eq:t1}). An application of (\ref{eq:a2}) to (\ref{eq:t1})
yields:
\begin{equation}
F\left(\cdot+k\right)+\left\langle S^{*}\left(F\right)\left(\cdot+k\right),k\right\rangle _{\mathscr{L}}=0\quad\text{a.e. on }\Omega;\label{eq:a4}
\end{equation}
where we have used ``$\cdot+k$'' for the action in (\ref{eq:a1}).
Since $\tau$ in (\ref{eq:a1}) is an action by measure-automorphisms,
(\ref{eq:a4}) implies 
\begin{equation}
F\left(\cdot\right)+\left\langle S^{*}\left(F\right)\left(\cdot\right),k\right\rangle _{\mathscr{L}}=0;\label{eq:a5}
\end{equation}
again with $k\in\mathscr{L}$ arbitrary. If $F\neq0$ in $L^{2}\left(\Omega,\mathcal{F},\mathbb{P}\right)$,
then the second term in (\ref{eq:a5}) would be independent of $k$
which is impossible with $S^{*}\left(F\right)\left(\cdot\right)\neq0$.
But if $S^{*}\left(F\right)=0$, then $F\left(\cdot\right)=0$ (in
$L^{2}\left(\Omega,\mathcal{F},\mathbb{P}\right)$) by (\ref{eq:a5});
and so the proof is completed.\end{proof}
\begin{rem}
We recall the definition of the domain of the closure $\overline{T}$.
The following is a necessary and sufficient condition for an $F\in L^{2}\left(\Omega,\mathcal{F},\mathbb{P}\right)$
to be in the domain of $\overline{T}$:

$F\in dom\left(\overline{T}\right)$ $\Longleftrightarrow$ $\exists$
a sequence $\left\{ F_{n}\right\} \subset\mathscr{D}$ s.t. 
\begin{equation}
\lim_{n,m\rightarrow\infty}\mathbb{E}\left(\left|F_{n}-F_{m}\right|^{2}+\left\Vert T\left(F_{n}\right)-T\left(F_{m}\right)\right\Vert _{\mathscr{L}}^{2}\right)=0.\label{eq:aa1}
\end{equation}
When (\ref{eq:aa1}) holds, we have:
\begin{equation}
\overline{T}\left(F\right)=\lim_{n\rightarrow\infty}T\left(F_{n}\right)\label{eq:aa2}
\end{equation}
where the limit on the RHS in (\ref{eq:aa2}) is in the Hilbert norm
of $L^{2}\left(\Omega,\mathcal{F},\mathbb{P}\right)\otimes\mathscr{L}$.\end{rem}
\begin{cor}
Let $\left(\mathscr{L},\Omega,\mathcal{F},\mathbb{P},\Phi\right)$
be as above, and let $T$ and $S$ be the two operators from \corref{msym}.
Then, for the domain of $\overline{T}$, we have the following:

For random variables $F$ in $L^{2}\left(\Omega,\mathcal{F},\mathbb{P}\right)$,
the following two conditions are equivalent:
\begin{enumerate}
\item $F\in dom\left(\overline{T}\right)$;
\item $\exists C=C_{F}<\infty$ s.t. 
\[
\left|\mathbb{E}\left(F\,S\left(\psi\right)\right)\right|^{2}\leq C\,\mathbb{E}\left(\left\Vert \psi\left(\cdot\right)\right\Vert _{\mathscr{L}}^{2}\right)
\]
holds for $\forall\psi\in span\left\{ G\otimes k\mid G\in\mathscr{D},\:k\in\mathscr{L}\right\} $.

Recall
\[
S\left(\cdot\otimes k\right)=M_{\Phi\left(k\right)}\cdot-\left\langle T\left(\cdot\right),k\right\rangle _{\mathscr{L}};
\]
equivalently, 
\[
S\left(G\otimes k\right)=\Phi\left(k\right)G-\left\langle T\left(G\right),k\right\rangle _{\mathscr{L}}
\]
for all $G\in\mathscr{D}$, and all $k\in\mathscr{L}$.

\end{enumerate}
\end{cor}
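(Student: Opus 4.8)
The plan is to recognize condition (2) as precisely the defining boundedness criterion for membership in $dom\left(S^{*}\right)$, and then to invoke the identity $\overline{T}=S^{*}$ established in \corref{msym}. First I would record that, by \corref{msym}, $dom\left(\overline{T}\right)=dom\left(S^{*}\right)$, so it suffices to show that condition (2) characterizes $dom\left(S^{*}\right)$.

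Next I would apply the adjoint definition \defref{adj} to the operator $S$, taking care that $S$ runs in the direction $\mathscr{H}_{2}\xrightarrow{\;S\;}\mathscr{H}_{1}$, so that the roles of the two Hilbert spaces are interchanged relative to $T$. By that definition, $F\in\mathscr{H}_{1}$ lies in $dom\left(S^{*}\right)$ exactly when there is a finite constant $C$ with
\[
\left|\left\langle S\psi,F\right\rangle _{\mathscr{H}_{1}}\right|\leq C\left\Vert \psi\right\Vert _{\mathscr{H}_{2}},\quad\forall\psi\in dom\left(S\right)=\mathscr{D}_{2}.
\]
Since $\mathscr{D}_{2}=span\left\{ G\otimes k\mid G\in\mathscr{D},\:k\in\mathscr{L}\right\}$, the test vectors $\psi$ range over exactly the set named in condition (2).

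It then remains to translate the abstract quantities into the stated expectation forms. Using the inner product (\ref{eq:no1}) on $\mathscr{H}_{1}$, the pairing becomes $\left\langle S\psi,F\right\rangle _{\mathscr{H}_{1}}=\mathbb{E}\left(F\,S\left(\psi\right)\right)$, and by (\ref{eq:no3}) the $\mathscr{H}_{2}$-norm is $\left\Vert \psi\right\Vert _{\mathscr{H}_{2}}^{2}=\mathbb{E}\left(\left\Vert \psi\left(\cdot\right)\right\Vert _{\mathscr{L}}^{2}\right)$. Squaring the boundedness inequality and absorbing $C^{2}$ into a new constant yields precisely the inequality in (2); conversely, that inequality supplies the bound required by \defref{adj}. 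Thus conditions (1) and (2) are equivalent.

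I expect no genuine analytic obstacle here: the whole content is the identification $\overline{T}=S^{*}$, already in hand, together with correct bookkeeping. The one point demanding care is the interchange of the two Hilbert spaces when \defref{adj} is applied to $S$ rather than $T$; in particular one should first note that $dom\left(S\right)=\mathscr{D}_{2}$ is dense in $\mathscr{H}_{2}$ (by \lemref{dense} together with density of $\mathscr{L}$ in itself), so that $S^{*}$ is defined at all, and then keep straight that the Riesz bound is tested against $\left\Vert \psi\right\Vert _{\mathscr{H}_{2}}$ rather than $\left\Vert \psi\right\Vert _{\mathscr{H}_{1}}$.
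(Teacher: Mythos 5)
Your argument is correct and is exactly what the paper's one-line proof (``Immediate from the previous corollary'') intends: condition (2) is the Riesz boundedness criterion of \defref{adj} for $F\in dom\left(S^{*}\right)$, and \corref{msym} gives $\overline{T}=S^{*}$, so the two conditions coincide. Your bookkeeping of the squared constant and of the interchange of $\mathscr{H}_{1}$ and $\mathscr{H}_{2}$ is accurate, so nothing further is needed.
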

\begin{proof}
Immediate from the previous corollary.
\end{proof}

\subsection{A derivation on the algebra $\mathscr{D}$}

The study of unbounded derivations has many applications in mathematical
physics; in particular in making precise the time dependence of quantum
observables, i.e., the dynamics in the Schrödinger picture; --- in
more detail, in the problem of constructing dynamics in statistical
mechanics. An early application of unbounded derivations (in the commutative
case) can be found in the work of Silov \cite{MR0027131}; and the
later study of unbounded derivations in non-commutative $C^{*}$-algebras
is outlined in \cite{MR611508}. There is a rich in variety unbounded
derivations, because of the role they play in applications to dynamical
systems in quantum physics. 

But previously the theory of unbounded derivations has not yet been
applied systematically to stochastic analysis in the sense of Malliavin.
In the present section, we turn to this. We begin with the following:
\begin{lem}[Leibniz-Malliavin]
\label{lem:md}Let $\mathscr{H}_{1}\xrightarrow{\;T\;}\mathscr{H}_{2}$
be the Malliavin derivative from (\ref{eq:mc3})-(\ref{eq:mc4}).
Then, 
\begin{enumerate}
\item $dom\left(T\right)=:\mathscr{D}$, given by (\ref{eq:mc3}), is an
\emph{algebra} of functions on $\Omega$ under pointwise product,
i.e., $FG\in\mathscr{D}$, $\forall F,G\in\mathscr{D}$.
\item $\mathscr{H}_{2}$ is a \emph{module} over $\mathscr{D}$ where $\mathscr{H}_{2}=L^{2}\left(\Omega,\mathbb{P}\right)\otimes\mathscr{L}$
(= vector valued $L^{2}$-random variables.)
\item Moreover, 
\begin{equation}
T\left(FG\right)=T\left(F\right)G+F\,T\left(G\right),\quad\forall F,G\in\mathscr{D},\label{eq:mc7}
\end{equation}
i.e., $T$ is a module-derivation. \end{enumerate}
\begin{notation*}
The eq. (\ref{eq:mc7}) is called the Leibniz-rule. By the Leibniz,
we refer to the traditional rule of Leibniz for the derivative of
a product. And the Malliavin derivative is thus an infinite-dimensional
extension of Leibniz calculus.
\end{notation*}
\end{lem}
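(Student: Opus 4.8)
The plan is to treat the three assertions in order; (1) carries the real content, while (2) and (3) follow from it by a direct, representation-based computation. Throughout I will use that $T$ is already known to be a well-defined linear operator on $\mathscr{D}$ --- this is exactly what is secured by \thmref{MD} together with the duality/pairing argument that follows \defref{MD} --- so that the formula of \defref{MD} may be evaluated on \emph{any} polynomial representation of a given element of $\mathscr{D}$.

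For (1), let $F = p(\Phi(h_1),\dots,\Phi(h_n))$ and $G = q(\Phi(k_1),\dots,\Phi(k_m))$ be two elements of $\mathscr{D}$. I would merge the finite systems into a single list $(l_1,\dots,l_N) := (h_1,\dots,h_n,k_1,\dots,k_m)$, $N=n+m$, view $p$ as a polynomial $\tilde p \in \mathbb{R}[x_1,\dots,x_N]$ in the first $n$ variables and $q$ as $\tilde q \in \mathbb{R}[x_1,\dots,x_N]$ in the last $m$ variables, so that $F = \tilde p(\Phi(l_1),\dots,\Phi(l_N))$ and $G = \tilde q(\Phi(l_1),\dots,\Phi(l_N))$. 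Since $\mathbb{R}[x_1,\dots,x_N]$ is a ring, $\tilde p\,\tilde q$ is again a polynomial and $FG = (\tilde p\,\tilde q)(\Phi(l_1),\dots,\Phi(l_N)) \in \mathscr{D}$; this proves (1). For (2), I would define the action of $\mathscr{D}$ on the algebraic tensor product $\mathscr{D}\otimes\mathscr{L}$ by $F\cdot(G\otimes k) := (FG)\otimes k$, extended bilinearly. By (1) this lands in $\mathscr{D}\otimes\mathscr{L}$, and associativity and the module axioms are inherited from the commutative algebra structure of $\mathscr{D}$; this is the precise sense in which the dense subspace of $\mathscr{H}_2$ is a $\mathscr{D}$-module, and it is exactly what makes the products $T(F)\,G$ and $F\,T(G)$ in (3) meaningful.

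For (3), with $F,G$ and the common system $l_1,\dots,l_N$ as in (1), I would apply \defref{MD} to the representation $FG = (\tilde p\,\tilde q)(\Phi(l_\bullet))$ and invoke the classical Leibniz rule $\partial_i(\tilde p\,\tilde q) = (\partial_i\tilde p)\tilde q + \tilde p(\partial_i\tilde q)$ for partial derivatives of polynomials. This gives, writing $\Phi(l_\bullet) = (\Phi(l_1),\dots,\Phi(l_N))$,
\begin{align*}
T(FG) &= \sum_{i=1}^N (\partial_i\tilde p)(\Phi(l_\bullet))\,G\otimes l_i + \sum_{i=1}^N F\,(\partial_i\tilde q)(\Phi(l_\bullet))\otimes l_i \\
&= T(F)\,G + F\,T(G),
\end{align*}
where the last equality uses $\tilde q(\Phi(l_\bullet))=G$, $\tilde p(\Phi(l_\bullet))=F$, and the representation-independence of $T$ to identify the two sums as $T(F)G$ and $FT(G)$ (the summands with $i>n$ in the first sum, and with $i\le n$ in the second, drop out because $\tilde p$, respectively $\tilde q$, does not involve those variables).

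The main obstacle is not the algebra --- the product rule for polynomials is purely formal --- but the one point one must be careful about: that evaluating the formula of \defref{MD} in the enlarged system $l_1,\dots,l_N$ returns precisely the same vectors $T(F)$ and $T(G)$ produced by the original representations of $F$ and $G$. This is exactly the well-definedness (representation-independence) of the Malliavin derivative established before the statement of this lemma, so I would cite it rather than reprove it; granted that, the three assertions follow as above.
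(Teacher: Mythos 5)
Your proposal is correct and follows essentially the same route as the paper: reduce $F$ and $G$ to a common system of vectors (the paper's ``trick''), observe that $FG$ is then a polynomial in the same Gaussian variables, and apply the classical Leibniz rule for polynomials, with the representation-independence of $T$ (established before the lemma) doing the real work. Your version is if anything slightly more explicit than the paper's about how the common system is obtained (concatenation) and about why the extra summands vanish.
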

\begin{proof}
To show that $\mathscr{D}\subset\mathscr{H}_{1}=L^{2}\left(\Omega,\mathbb{P}\right)$
is an algebra under pointwise multiplication, the following trick
is useful. It follows from finite-dimensional Hilbert space geometry. 

Let $F,G$ be as in \defref{poly}. Then $\exists p,q\in\mathbb{R}\left[x_{1},\cdots,x_{n}\right]$,
$\left\{ l_{i}\right\} _{i=1}^{n}\subset\mathscr{L}$, such that 
\[
F=p\left(\Phi\left(l_{1}\right),\cdots,\Phi\left(l_{n}\right)\right),\;\mbox{and}\quad G=q\left(\Phi\left(l_{1}\right),\cdots,\Phi\left(l_{n}\right)\right).
\]
That is, the same system $l_{1},\cdots,l_{n}$ may be chosen for the
two functions $F$ and $G$.

For the pointwise product, we have 
\[
FG=\left(pq\right)\left(\Phi\left(l_{1}\right),\cdots,\Phi\left(l_{n}\right)\right),
\]
i.e., the product in $\mathbb{R}\left[x_{1},\cdots,x_{n}\right]$
with substitution of the random variable 
\[
\left(\Phi\left(l_{1}\right),\cdots,\Phi\left(l_{n}\right)\right):\Omega\longrightarrow\mathbb{R}^{n}.
\]

Eq. (\ref{eq:mc7}) $\Longleftrightarrow$ $\frac{\partial\left(pq\right)}{\partial x_{i}}=\frac{\partial p}{\partial x_{i}}q+p\frac{\partial q}{\partial x_{i}}$,
which is the usual Leibniz rule applied to polynomials. Note that
\[
T\left(FG\right)=\sum_{i=1}^{n}\frac{\partial}{\partial x_{i}}\left(pq\right)\left(\Phi\left(l_{1}\right),\cdots,\Phi\left(l_{n}\right)\right)\otimes l_{i}.
\]
\end{proof}
\begin{rem}
There is an extensive literature on the theory of densely defined
unbounded derivations in $C^{*}$-algebras. This includes both the
cases of abelian and non-abelian $*$-algebras. And moreover, this
study includes both derivations in these algebras, as well as the
parallel study of module derivations. So the case of the Malliavin
derivative is in fact a special case of this study. Readers interested
in details are referred to \cite{MR1490835}, \cite{MR762697}, \cite{MR545651},
and \cite{MR611508}.\end{rem}
\begin{defn}
Let $\left(\mathscr{L},\Omega,\mathcal{F},\mathbb{P},\Phi\right)$
be a Gaussian field, and $T$ be the Malliavin derivative with $dom\left(T\right)=\mathscr{D}$.
For all $k\in\mathscr{L}$, set 
\begin{equation}
T_{k}\left(F\right):=\left\langle T\left(F\right),k\right\rangle ,\quad F\in\mathscr{D}.\label{eq:MD1}
\end{equation}
In particular, let $F=p\left(\Phi\left(l_{1}\right),\cdots,\Phi\left(l_{1}\right)\right)$
be as in (\ref{eq:mc3}), then 
\[
T_{k}\left(F\right)=\sum_{i=1}^{n}\frac{\partial p}{\partial x_{i}}\left(\Phi\left(l_{1}\right),\cdots,\Phi\left(l_{1}\right)\right)\left\langle l_{i},k\right\rangle .
\]
\end{defn}
\begin{cor}
$T_{k}$ is a derivative on $\mathscr{D}$, i.e., 
\begin{equation}
T_{k}\left(FG\right)=\left(T_{k}F\right)G+F\left(T_{k}G\right),\quad\forall F,G\in\mathscr{D},\;\forall k\in\mathscr{L}.\label{eq:dd1}
\end{equation}
\end{cor}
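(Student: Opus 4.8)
The plan is to deduce this directly from the module-derivation property of the full Malliavin derivative $T$ established in \lemref{md}, since $T_{k}$ is obtained from $T$ merely by contracting against the fixed vector $k\in\mathscr{L}$. Recall from (\ref{eq:MD1}) that $T_{k}\left(F\right)=\left\langle T\left(F\right),k\right\rangle _{\mathscr{L}}$, where $T\left(F\right)\in\mathscr{H}_{2}=L^{2}\left(\Omega,\mathbb{P}\right)\otimes\mathscr{L}$, and the pairing $\left\langle \cdot,k\right\rangle _{\mathscr{L}}$ acts only on the $\mathscr{L}$-factor, returning an element of $L^{2}\left(\Omega,\mathbb{P}\right)$.

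First I would invoke the Leibniz rule (\ref{eq:mc7}) for $T$, namely $T\left(FG\right)=T\left(F\right)G+F\,T\left(G\right)$ in $\mathscr{H}_{2}$, which I may assume from \lemref{md}. Then I would contract both sides against $k$. The only point requiring care is that the $\mathscr{L}$-inner product with $k$ commutes with multiplication by the scalar-function factors $F,G\in\mathscr{D}\subset L^{2}\left(\Omega,\mathbb{P}\right)$: writing $T\left(F\right)=\sum_{i}\psi_{i}\otimes l_{i}$ with $\psi_{i}\in L^{2}\left(\Omega\right)$ and $l_{i}\in\mathscr{L}$, one has $T\left(F\right)G=\sum_{i}\left(\psi_{i}G\right)\otimes l_{i}$, so that $\left\langle T\left(F\right)G,k\right\rangle _{\mathscr{L}}=G\sum_{i}\psi_{i}\left\langle l_{i},k\right\rangle _{\mathscr{L}}=G\,T_{k}\left(F\right)$, and similarly for the other term. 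This is precisely the statement that the module structure on $\mathscr{H}_{2}$ over $\mathscr{D}$ (item (2) of \lemref{md}) is compatible with the contraction map $\left\langle \cdot,k\right\rangle _{\mathscr{L}}$, and it is made transparent by the tensor-product inner product (\ref{eq:no2}).

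Combining these gives $T_{k}\left(FG\right)=\left\langle T\left(FG\right),k\right\rangle _{\mathscr{L}}=\left\langle T\left(F\right)G,k\right\rangle _{\mathscr{L}}+\left\langle F\,T\left(G\right),k\right\rangle _{\mathscr{L}}=\left(T_{k}F\right)G+F\left(T_{k}G\right)$, which is exactly (\ref{eq:dd1}). I do not expect any genuine obstacle here: the substantive content is carried entirely by the vector-valued Leibniz rule of \lemref{md}, and the passage to the scalar-valued derivation $T_{k}$ is a routine contraction. The only thing to keep straight is the bookkeeping of which factor lives in $\mathscr{L}$ and which in $L^{2}\left(\Omega,\mathbb{P}\right)$.
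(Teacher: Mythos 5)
Your proposal is correct and follows exactly the paper's route: the paper's own proof is the one-line remark that (\ref{eq:dd1}) ``follows from (\ref{eq:mc7})'', i.e.\ from the Leibniz rule for the vector-valued operator $T$ in \lemref{md}. You have merely filled in the routine bookkeeping that the contraction $\left\langle \cdot,k\right\rangle _{\mathscr{L}}$ commutes with the $\mathscr{D}$-module structure on $\mathscr{H}_{2}$, which is a sound and welcome elaboration of the same argument.
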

\begin{proof}
Follows from (\ref{eq:mc7}).
\end{proof}

\begin{cor}
Let $\left(\mathscr{L},\Omega,\mathcal{F},\mathbb{P},\Phi\right)$
be a Gaussian field. Fix $k\in\mathscr{L}$, and let $T_{k}$ be the
Malliavin derivative in the $k$ direction. Then on $\mathscr{D}$
we have 
\begin{equation}
T_{k}+T_{k}^{*}=M_{\Phi\left(k\right)},\;\mbox{and}\label{eq:TTadj1}
\end{equation}
\begin{equation}
\left[T_{k},T_{l}^{*}\right]=\left\langle k,l\right\rangle _{\mathscr{L}}I_{L^{2}\left(\Omega,\mathbb{P}\right)}.\label{eq:TTadj2}
\end{equation}
\end{cor}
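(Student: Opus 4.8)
The plan is to reduce both identities to the symmetric pair $\left(S,T\right)$ of \defref{pair} together with the Leibniz rule of \lemref{md}, working throughout on the common dense domain $\mathscr{D}$. First I would record that $\mathscr{D}$ is invariant under each operator in sight: it is an algebra under pointwise product by \lemref{md}, so $M_{\Phi\left(l\right)}$ maps $\mathscr{D}$ into $\mathscr{D}$; and since $T_{k}\left(p\left(\Phi\left(h_{1}\right),\cdots,\Phi\left(h_{n}\right)\right)\right)=\sum_{i}\frac{\partial p}{\partial x_{i}}\left(\Phi\left(h_{1}\right),\cdots,\Phi\left(h_{n}\right)\right)\left\langle h_{i},k\right\rangle$ is again of the form (\ref{eq:mc3}) by (\ref{eq:MD1}), each $T_{k}$ also preserves $\mathscr{D}$. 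Hence every composition below is well-defined on $\mathscr{D}$.

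For (\ref{eq:TTadj1}) I would compute $T_{k}^{*}$ on $\mathscr{D}$ directly from the symmetric pair. For $F,G\in\mathscr{D}$, contracting $T\left(F\right)\in\mathscr{H}_{2}$ against $k$ in the second tensor factor gives
\begin{equation}
\left\langle T_{k}F,G\right\rangle _{\mathscr{H}_{1}}=\left\langle T\left(F\right),G\otimes k\right\rangle _{\mathscr{H}_{2}}=\left\langle F,S\left(G\otimes k\right)\right\rangle _{\mathscr{H}_{1}},
\end{equation}
where the first equality unwinds the definition (\ref{eq:MD1}) of $T_{k}$ against the tensor inner product (\ref{eq:no2}), and the second is the symmetric-pair identity of \lemref{ST}. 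Since $S\left(G\otimes k\right)=M_{\Phi\left(k\right)}G-T_{k}G$ by \defref{pair}, this exhibits $\mathscr{D}\subset dom\left(T_{k}^{*}\right)$ together with $T_{k}^{*}G=M_{\Phi\left(k\right)}G-T_{k}G$ for all $G\in\mathscr{D}$, which is exactly (\ref{eq:TTadj1}).

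For (\ref{eq:TTadj2}) I would substitute $T_{l}^{*}=M_{\Phi\left(l\right)}-T_{l}$ (valid on $\mathscr{D}$ by the previous step) and expand
\begin{equation}
\left[T_{k},T_{l}^{*}\right]=\left[T_{k},M_{\Phi\left(l\right)}\right]-\left[T_{k},T_{l}\right].
\end{equation}
The second bracket vanishes: on $F=p\left(\Phi\left(h_{1}\right),\cdots,\Phi\left(h_{n}\right)\right)$ both $T_{k}T_{l}F$ and $T_{l}T_{k}F$ equal $\sum_{i,j}\frac{\partial^{2}p}{\partial x_{i}\partial x_{j}}\left(\Phi\left(h_{1}\right),\cdots,\Phi\left(h_{n}\right)\right)\left\langle h_{i},k\right\rangle \left\langle h_{j},l\right\rangle$ by equality of mixed partials, so $\left[T_{k},T_{l}\right]=0$. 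For the first bracket I would apply the Leibniz rule (\ref{eq:mc7})/(\ref{eq:dd1}): $T_{k}\left(\Phi\left(l\right)F\right)=T_{k}\left(\Phi\left(l\right)\right)F+\Phi\left(l\right)T_{k}\left(F\right)$, where $\Phi\left(l\right)$ is the degree-one polynomial $p\left(x\right)=x$ in the single variable $\Phi\left(l\right)$, so $T_{k}\left(\Phi\left(l\right)\right)=\left\langle l,k\right\rangle$ is a constant scalar. Thus $\left[T_{k},M_{\Phi\left(l\right)}\right]F=\left\langle k,l\right\rangle _{\mathscr{L}}F$, and combining the two brackets gives (\ref{eq:TTadj2}).

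The calculations themselves are short; the point requiring genuine care is the bookkeeping of unbounded-operator domains, namely that $\mathscr{D}$ is \emph{simultaneously} invariant under $T_{k}$, $T_{l}$, and $M_{\Phi\left(l\right)}$, and that the identity $T_{k}^{*}=M_{\Phi\left(k\right)}-T_{k}$ is asserted only as a restriction to $\mathscr{D}$ (I do not attempt to describe $dom\left(T_{k}^{*}\right)$ in full). This is precisely where the dual-pair formalism of \lemref{ST} does the work: it furnishes the adjoint action on the dense invariant core for free, which is all (\ref{eq:TTadj1})--(\ref{eq:TTadj2}) require. Conceptually, the scalar computation $T_{k}\left(\Phi\left(l\right)\right)=\left\langle k,l\right\rangle _{\mathscr{L}}$ is the single source of the commutation relation, so (\ref{eq:TTadj2}) is just the CCR (\ref{eq:cr2}) re-emerging in the representation $k\mapsto T_{k}$.
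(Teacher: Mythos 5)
Your proof is correct and follows essentially the same route as the paper: identity (\ref{eq:TTadj1}) comes from the adjoint/duality relation (you cite \lemref{ST}, the paper redoes the equivalent computation $\mathbb{E}(T_{k}(F)G)+\mathbb{E}(FT_{k}(G))=\mathbb{E}(T_{k}(FG))=\mathbb{E}(\Phi(k)FG)$ via the Leibniz rule and (\ref{eq:mc5})), and (\ref{eq:TTadj2}) then follows from (\ref{eq:TTadj1}) together with $[T_{k},T_{l}]=0$ and the Leibniz computation $[T_{k},M_{\Phi(l)}]=\langle k,l\rangle_{\mathscr{L}}I$. Your explicit attention to the invariance of $\mathscr{D}$ is a welcome addition but does not change the argument.
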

\begin{proof}
For all $F,G\in\mathscr{D}$, we have 
\begin{eqnarray*}
\mathbb{E}\left(T_{k}\left(F\right)G\right)+\mathbb{E}\left(F\,T_{k}\left(G\right)\right) & \underset{\text{by \ensuremath{\left(\ref{eq:dd1}\right)}}}{=} & \mathbb{E}\left(T_{k}\left(FG\right)\right)\\
 & \underset{\text{by \ensuremath{\left(\ref{eq:mc5}\right)}}}{=} & \mathbb{E}\left(\Phi\left(k\right)FG\right)
\end{eqnarray*}
which yields the assertion in (\ref{eq:TTadj1}). Eq. (\ref{eq:TTadj2})
now follows from (\ref{eq:TTadj1}) and the fact that $\left[T_{k},T_{l}\right]=0$. \end{proof}
\begin{defn}
Let $\left(\mathscr{L},\Omega,\mathcal{F},\mathbb{P},\Phi\right)$
be a Gaussian field. For all $k\in\mathscr{L}$, let $T_{k}$ be Malliavin
derivative in the $k$-direction (eq. (\ref{eq:MD1})). Assume $\mathscr{L}$
is separable, i.e., $\dim\mathscr{L}=\aleph_{0}$. For every ONB $\left\{ e_{i}\right\} _{i=1}^{\infty}$
in $\mathscr{L}$, let 
\begin{equation}
N:=\sum_{i}T_{e_{i}}^{*}T_{e_{i}}.\label{eq:MN}
\end{equation}
($N$ is the CCR number operator. See \secref{CCR} below.)\end{defn}
\begin{example}
$N\mathbbm{1}=0$, since $T_{e_{i}}\mathbbm{1}=0$, $\forall i$.
Similarly, 
\begin{align}
N\Phi\left(k\right) & =\Phi\left(k\right)\label{eq:N1}\\
N\Phi\left(k\right)^{2} & =-2\left\Vert k\right\Vert ^{2}\mathbbm{1}+2\Phi\left(k\right)^{2},\quad\forall k\in\mathscr{L}.\label{eq:N2}
\end{align}
To see this, note that 
\begin{align*}
\sum_{i}T_{e_{i}}^{*}T_{e_{i}}\Phi\left(k\right) & =\sum_{i}T_{e_{i}}^{*}\left\langle e_{i},k\right\rangle \mathbbm{1}\\
 & =\sum_{i}\Phi\left(e_{i}\right)\left\langle e_{i},k\right\rangle \\
 & =\Phi\left(\sum_{i}\left\langle e_{i},k\right\rangle e_{i}\right)=\Phi\left(k\right),
\end{align*}
which is (\ref{eq:N1}). The verification of (\ref{eq:N2}) is similar. \end{example}
\begin{thm}
Let $\left\{ e_{i}\right\} $ be an ONB in $\mathscr{L}$, then 
\begin{equation}
T^{*}\overline{T}=\sum_{i}T_{e_{i}}^{*}T_{e_{i}}=N.\label{eq:MN1}
\end{equation}
\end{thm}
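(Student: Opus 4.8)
The plan is to reduce the operator identity to a Parseval expansion of the Malliavin derivative in the chosen ONB $\{e_i\}$, combined with the first-order adjoint formula $T_k^{*}=M_{\Phi\left(k\right)}-T_k$ already recorded in (\ref{eq:TTadj1}). First I would write down the ONB expansion of $T$: for $F=p\left(\Phi\left(h_1\right),\dots,\Phi\left(h_n\right)\right)\in\mathscr{D}$, the $\mathscr{L}$-component of $T\left(F\right)$ along $e_i$ is by definition (\ref{eq:MD1}) exactly $T_{e_i}\left(F\right)=\left\langle T\left(F\right),e_i\right\rangle_{\mathscr{L}}$, so that $T\left(F\right)=\sum_i T_{e_i}\left(F\right)\otimes e_i$ converges in $\mathscr{H}_2$, and Parseval in the $\mathscr{L}$-factor gives, for all $F,G\in\mathscr{D}$,
\[
\left\langle T\left(F\right),T\left(G\right)\right\rangle_{\mathscr{H}_2}=\sum_i\left\langle T_{e_i}\left(F\right),T_{e_i}\left(G\right)\right\rangle_{\mathscr{H}_1}.
\]

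Next I would dispose of the two adjoints. On the left: since $T\left(F\right)$ lies in the \emph{finite} algebraic sum $\mathscr{D}\otimes\mathrm{span}\left(h_1,\dots,h_n\right)\subset\mathscr{D}_2=dom\left(S\right)$, and $S\subset T^{*}$ by \lemref{ST}, we have $T\left(F\right)\in dom\left(T^{*}\right)$, hence $F\in dom\left(T^{*}\overline{T}\right)$ and $\left\langle T\left(F\right),T\left(G\right)\right\rangle_{\mathscr{H}_2}=\left\langle T^{*}\overline{T}\left(F\right),G\right\rangle_{\mathscr{H}_1}$. On the right: each $T_{e_i}\left(F\right)\in\mathscr{D}$, and by (\ref{eq:TTadj1}) one has $\mathscr{D}\subset dom\left(T_{e_i}^{*}\right)$ with $T_{e_i}^{*}=M_{\Phi\left(e_i\right)}-T_{e_i}$, so $\left\langle T_{e_i}\left(F\right),T_{e_i}\left(G\right)\right\rangle=\left\langle T_{e_i}^{*}T_{e_i}\left(F\right),G\right\rangle$. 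Combining, $\left\langle T^{*}\overline{T}\left(F\right)-\sum_i T_{e_i}^{*}T_{e_i}\left(F\right),G\right\rangle=0$ for every $G\in\mathscr{D}$, and density of $\mathscr{D}$ in $\mathscr{H}_1$ forces $T^{*}\overline{T}\left(F\right)=NF$ for all $F\in\mathscr{D}$.

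It then remains to verify that the series defining $N$ genuinely converges (strongly) on $\mathscr{D}$, so the identity is an operator identity and not merely a formal one. Here I would compute $T_{e_i}^{*}T_{e_i}\left(F\right)=\Phi\left(e_i\right)T_{e_i}\left(F\right)-T_{e_i}^{2}\left(F\right)$ and sum over $i$; using $\sum_i\left\langle h_j,e_i\right\rangle\left\langle h_{j'},e_i\right\rangle=\left\langle h_j,h_{j'}\right\rangle$ and $\sum_i\left\langle h_j,e_i\right\rangle\Phi\left(e_i\right)=\Phi\left(h_j\right)$ (both by Parseval, resp. the It\=o-isometry, in $\mathscr{L}$), the sum telescopes to the closed form
\[
NF=\sum_{j=1}^{n}\frac{\partial p}{\partial x_j}\bigl(\Phi\left(h_1\right),\dots,\Phi\left(h_n\right)\bigr)\,\Phi\left(h_j\right)-\sum_{j,j'=1}^{n}\frac{\partial^{2}p}{\partial x_j\,\partial x_{j'}}\bigl(\Phi\left(h_1\right),\dots,\Phi\left(h_n\right)\bigr)\,\left\langle h_j,h_{j'}\right\rangle_{\mathscr{L}},
\]
which is a well-defined element of $\mathscr{D}$ and, as a consistency check, equals $S\left(T\left(F\right)\right)=T^{*}\overline{T}\left(F\right)$ (recall $\overline{T}F=TF\in\mathscr{D}_2$ and $T^{*}|_{\mathscr{D}_2}=S$). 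Independence of the chosen ONB is automatic, since the limiting operator $T^{*}\overline{T}$ makes no reference to $\{e_i\}$.

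The main obstacle I expect is the simultaneous bookkeeping of the infinite sum and the two adjoints: one must confirm $F\in dom\left(T^{*}\overline{T}\right)$ and each $T_{e_i}F\in dom\left(T_{e_i}^{*}\right)$ \emph{before} invoking the adjoint relations, and then upgrade the weak (tested-against-$G$) identity to a strong operator identity by exhibiting the closed form above. The self-adjointness of $T^{*}\overline{T}$ supplied by \thmref{TPolar} is what guarantees that agreement on the core $\mathscr{D}$ determines the operator, closing the argument.
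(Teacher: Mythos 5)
Your proposal is correct, but it takes a genuinely different route from the paper. The paper's own proof is a two-line argument on coherent vectors: it invokes the density of $span\{e^{\Phi(k)}\mid k\in\mathscr{L}\}$ in $L^{2}(\Omega,\mathbb{P})$ and checks that both sides of (\ref{eq:MN1}) send $e^{\Phi(k)}$ to $\left(\Phi(k)-\left\Vert k\right\Vert^{2}\right)e^{\Phi(k)}$, using the previously established formula (\ref{eq:Texp3}). You instead work on the polynomial core $\mathscr{D}$: you decompose $T(F)=\sum_{i}T_{e_{i}}(F)\otimes e_{i}$ by Parseval in the $\mathscr{L}$-factor, use the symmetric-pair containment $S\subset T^{*}$ to certify $T(F)\in dom(T^{*})$, pass each term through $T_{e_{i}}^{*}=M_{\Phi(e_{i})}-T_{e_{i}}$, and then sum to the closed form $-\Delta+\nabla_{\Phi}$, which matches the paper's (\ref{eq:Dd1}). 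Your version is longer but buys more: it makes the convergence of the series defining $N$ explicit, exhibits the ONB-independence, and is more careful about domains than the paper's sketch. Both arguments share the same residual subtlety, namely that agreement of two (a priori unbounded) operators on a dense subspace only pins down the self-adjoint operator $T^{*}\overline{T}$ if that subspace is a core for it; you at least flag this at the end, whereas the paper passes over it silently. If you wanted to close that last gap fully, you would argue that $\mathscr{D}$ (equivalently the span of the exponential vectors) is a core for $T^{*}\overline{T}$, e.g.\ via the unitary equivalence with the Fock number operator in \thmref{re}, for which the finite-particle vectors form a core.
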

\begin{proof}
Note the span of $\left\{ e^{\Phi\left(k\right)}\mid k\in\mathscr{L}\right\} $
is dense in $L^{2}\left(\Omega,\mathbb{P}\right)$, and both sides
of (\ref{eq:MN1}) agree on $e^{\Phi\left(k\right)}$, $k\in\mathscr{L}$.
Indeed, by (\ref{eq:MN}), 
\[
T^{*}\overline{T}e^{\Phi\left(k\right)}=Ne^{\Phi\left(k\right)}=\left(\Phi\left(k\right)-\left\Vert k\right\Vert ^{2}\right)e^{\Phi\left(k\right)}.
\]
\end{proof}
\begin{cor}
Let $D:=T^{*}\overline{T}$. Specialize to the case of $n=1$, and
consider $F=f\left(\Phi\left(k\right)\right)$, $k\in\mathscr{L}$,
$f\in C^{\infty}\left(\mathbb{R}\right)$; then 
\begin{equation}
D\left(F\right)=-\left\Vert k\right\Vert _{\mathscr{L}}^{2}f''\left(\Phi\left(k\right)\right)+\Phi\left(k\right)f'\left(\Phi\left(k\right)\right).\label{eq:MTT}
\end{equation}
\end{cor}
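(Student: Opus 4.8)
The plan is to reduce everything to the identity $D=T^{*}\overline{T}=\sum_{i}T_{e_{i}}^{*}T_{e_{i}}=N$ from (\ref{eq:MN1}) and then evaluate the number operator $N$ termwise on $F=f\left(\Phi\left(k\right)\right)$. First I would fix an ONB $\left\{e_{i}\right\}$ of $\mathscr{L}$ and compute $T_{e_{i}}F$. By the derivation (chain-rule) property of the Malliavin derivative in (\ref{eq:mc4}), extended from polynomials to $f\in C^{\infty}\left(\mathbb{R}\right)$, one has $T\left(f\left(\Phi\left(k\right)\right)\right)=f'\left(\Phi\left(k\right)\right)\otimes k$, so that, using $T_{e_{i}}\left(\cdot\right)=\left\langle T\left(\cdot\right),e_{i}\right\rangle _{\mathscr{L}}$,
\[
T_{e_{i}}F=\left\langle k,e_{i}\right\rangle _{\mathscr{L}}\,f'\left(\Phi\left(k\right)\right).
\]

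Next I would apply $T_{e_{i}}^{*}$ via the adjoint relation $T_{e_{i}}^{*}=M_{\Phi\left(e_{i}\right)}-T_{e_{i}}$ from (\ref{eq:TTadj1}). Since $T_{e_{i}}\left(f'\left(\Phi\left(k\right)\right)\right)=\left\langle k,e_{i}\right\rangle _{\mathscr{L}}f''\left(\Phi\left(k\right)\right)$, this gives
\[
T_{e_{i}}^{*}T_{e_{i}}F=\left\langle k,e_{i}\right\rangle _{\mathscr{L}}\big(\Phi\left(e_{i}\right)f'\left(\Phi\left(k\right)\right)-\left\langle k,e_{i}\right\rangle _{\mathscr{L}}f''\left(\Phi\left(k\right)\right)\big).
\]
Summing over $i$ and using that $\Phi$ is a linear isometry, so that $\sum_{i}\left\langle k,e_{i}\right\rangle _{\mathscr{L}}\Phi\left(e_{i}\right)=\Phi\big(\sum_{i}\left\langle k,e_{i}\right\rangle _{\mathscr{L}}e_{i}\big)=\Phi\left(k\right)$ with $L^{2}$-convergence, together with Parseval's identity $\sum_{i}\left\langle k,e_{i}\right\rangle _{\mathscr{L}}^{2}=\left\Vert k\right\Vert _{\mathscr{L}}^{2}$, yields precisely (\ref{eq:MTT}).

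The genuine subtlety lies not in the algebra but in the domain and convergence bookkeeping: one must justify that $f\left(\Phi\left(k\right)\right),f'\left(\Phi\left(k\right)\right),f''\left(\Phi\left(k\right)\right)$ all lie in $L^{2}$ and that $f\left(\Phi\left(k\right)\right)\in \mathrm{dom}\left(T^{*}\overline{T}\right)$ (which needs mild growth control on $f$, or a restriction to polynomials with subsequent extension), and that the chain rule together with the termwise handling of the infinite sum $\sum_{i}T_{e_{i}}^{*}T_{e_{i}}$ is legitimate. The cleanest route to dispatch both issues is to verify the formula first on the total set $\left\{e^{\Phi\left(k\right)}\right\}$, where it collapses to (\ref{eq:Texp3}) because $f=f'=f''=\exp$ gives $-\left\Vert k\right\Vert _{\mathscr{L}}^{2}e^{\Phi\left(k\right)}+\Phi\left(k\right)e^{\Phi\left(k\right)}=\left(\Phi\left(k\right)-\left\Vert k\right\Vert _{\mathscr{L}}^{2}\right)e^{\Phi\left(k\right)}$, and then to extend to general $f\left(\Phi\left(k\right)\right)$ by closedness of $D$ and a Hermite/polynomial approximation in the single Gaussian variable $\Phi\left(k\right)$.
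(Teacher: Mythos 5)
Your computation is correct and is essentially the paper's own argument: the paper's proof is the one-line ``direct application of the formulas of $\overline{T}$ and $T^{*}$,'' which is exactly the termwise evaluation you carry out via $T_{e_{i}}^{*}=M_{\Phi\left(e_{i}\right)}-T_{e_{i}}$ and Parseval (the paper reaches the same identity through the immediately preceding formula $T^{*}T=-\Delta+\nabla_{\Phi}$ with an orthonormal system, which for $n=1$ gives (\ref{eq:MTT}) after rescaling $k$). Your closing remarks on domains and on verifying the formula first on $e^{\Phi\left(k\right)}$ address a genuine gap that the paper itself leaves implicit, so they are a welcome addition rather than a deviation.
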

\begin{proof}
A direct application of the formulas of $\overline{T}$ and $T^{*}$. \end{proof}
\begin{rem}
If $\left\Vert k\right\Vert _{\mathscr{L}}=1$ in (\ref{eq:MTT}),
then the RHS in (\ref{eq:MTT}) is obtained by a substitution of the
real valued random variable $\Phi\left(k\right)$ into the deterministic
function 
\begin{equation}
\delta\left(f\right):=-\left(\frac{d}{dx}\right)^{2}f+x\left(\frac{d}{dx}\right)f.
\end{equation}
Then eq. (\ref{eq:MTT}) may be rewritten as 
\begin{equation}
D\left(f\left(\Phi\left(k\right)\right)\right)=\delta\left(f\right)\circ\Phi\left(k\right),\quad f\in C^{\infty}\left(\mathbb{R}\right).\label{eq:H0}
\end{equation}
\end{rem}
\begin{cor}
If $\left\{ H_{n}\right\} _{n\in\mathbb{N}_{0}}$, $\mathbb{N}_{0}=\left\{ 0,1,2,\cdots\right\} $,
denotes the Hermite polynomials on $\mathbb{R}$, then we get for
$\forall k\in\mathscr{L}$, $\left\Vert k\right\Vert _{\mathscr{L}}=1$,
the following eigenvalues 
\begin{equation}
D\left(H_{n}\left(\Phi\left(k\right)\right)\right)=n\,H_{n}\left(\Phi\left(k\right)\right).\label{eq:H1}
\end{equation}
\end{cor}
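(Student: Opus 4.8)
The plan is to reduce the statement entirely to the scalar formula (\ref{eq:H0}) already established in the preceding remark, so that the eigenvalue relation (\ref{eq:H1}) becomes nothing more than the classical Hermite differential equation. Since $\left\Vert k\right\Vert _{\mathscr{L}}=1$, the prior corollary gives $D\left(f\left(\Phi\left(k\right)\right)\right)=\delta\left(f\right)\circ\Phi\left(k\right)$ for every $f\in C^{\infty}\left(\mathbb{R}\right)$, where $\delta\left(f\right)=-f''+xf'$. Thus the infinite-dimensional operator $D=T^{*}\overline{T}$ acts on functionals of the single normalized Gaussian $\Phi\left(k\right)$ exactly as the one-variable operator $\delta$ acts on the corresponding profile function. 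Consequently it suffices to prove the purely one-dimensional fact that the Hermite polynomials are eigenfunctions of $\delta$ with eigenvalue $n$.

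First I would pin down the normalization, which is the one genuine point of care. The operator $\delta=-\frac{d^{2}}{dx^{2}}+x\frac{d}{dx}$ is the generator attached to the \emph{standard} Gaussian weight $g_{1}\left(x\right)=\left(2\pi\right)^{-1/2}e^{-x^{2}/2}$ that appears (for $n=1$) in Step 3 of the proof of \thmref{MD}; the factor is $x$ and not $2x$. Hence the relevant family $\left\{ H_{n}\right\} $ must be the \emph{probabilists'} Hermite polynomials, those orthogonal with respect to $g_{1}$, for which the Hermite ordinary differential equation reads $H_{n}''\left(x\right)-xH_{n}'\left(x\right)+nH_{n}\left(x\right)=0$. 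Rearranged, this is precisely $\delta\left(H_{n}\right)=nH_{n}$. (Were one to use the physicists' convention the weight and the eigenvalue would both change, so the normalization genuinely matters for the clean value $n$ asserted in (\ref{eq:H1}).)

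Substituting $f=H_{n}$ into (\ref{eq:H0}) then closes the argument at once:
\[
D\left(H_{n}\left(\Phi\left(k\right)\right)\right)=\delta\left(H_{n}\right)\circ\Phi\left(k\right)=n\,H_{n}\left(\Phi\left(k\right)\right),
\]
which is (\ref{eq:H1}).

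The main obstacle is not difficulty but verification of the eigenrelation $\delta\left(H_{n}\right)=nH_{n}$ at the stated normalization, and I would record it by a short induction rather than merely citing it. Using the standard identities $H_{n}'=nH_{n-1}$ and $H_{n+1}=xH_{n}-H_{n}'$ (so that $H_{n}=xH_{n-1}-\left(n-1\right)H_{n-2}$), one computes $\delta\left(H_{n}\right)=-n\left(n-1\right)H_{n-2}+nx\,H_{n-1}=n\bigl(xH_{n-1}-\left(n-1\right)H_{n-2}\bigr)=nH_{n}$, with base case $\delta\left(H_{0}\right)=0$. Finally it is worth emphasizing the conceptual reading: by (\ref{eq:MN1}) we have $D=T^{*}\overline{T}=N$, the CCR number operator, so (\ref{eq:H1}) is exactly the familiar statement that $H_{n}\left(\Phi\left(k\right)\right)$ is the $n$-particle eigenstate of the number operator, linking the Malliavin picture back to the Fock-space formalism of \secref{CCR}.
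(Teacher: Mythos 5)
Your proposal is correct and follows essentially the same route as the paper: substitute the eigenrelation $\delta\left(H_{n}\right)=nH_{n}$ into (\ref{eq:H0}). The paper simply cites that relation as well known, whereas you supply the normalization check (probabilists' Hermite polynomials for the weight $g_{1}$) and a short inductive verification, which is a welcome but not essentially different addition.
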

\begin{proof}
It is well-known that the Hermite polynomials $H_{n}$ satisfies 
\begin{equation}
\delta\left(H_{n}\right)=n\,H_{n},\quad\forall n\in\mathbb{N}_{0},\label{eq:H2}
\end{equation}
and so (\ref{eq:H1}) follows from a substitution of (\ref{eq:H2})
into (\ref{eq:H0}).\end{proof}
\begin{thm}
The spectrum of $T^{*}\overline{T}$, as an operator in $L^{2}\left(\Omega,\mathcal{F},\mathbb{P}\right)$,
is as follows:
\[
spec_{L^{2}\left(\mathbb{P}\right)}\left(T^{*}\overline{T}\right)=\mathbb{N}_{0}=\left\{ 0,1,2,\cdots\right\} .
\]
\end{thm}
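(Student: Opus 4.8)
The plan is to prove the two inclusions $spec_{L^{2}\left(\mathbb{P}\right)}\left(T^{*}\overline{T}\right)\supseteq\mathbb{N}_{0}$ and $spec_{L^{2}\left(\mathbb{P}\right)}\left(T^{*}\overline{T}\right)\subseteq\mathbb{N}_{0}$ separately, the decisive ingredient being an explicit orthogonal eigenbasis of $L^{2}\left(\Omega,\mathbb{P}\right)$ built from Hermite polynomials. First I would record that, by \thmref{TPolar}, the operator $N:=T^{*}\overline{T}$ is genuinely selfadjoint; hence its spectrum is a closed subset of $[0,\infty)$ and is governed by the spectral theorem. This is what lets one read off the spectrum completely once a complete system of eigenvectors has been produced, and in particular rules out any continuous or residual spectrum lying off $\mathbb{N}_{0}$.

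For the inclusion $\mathbb{N}_{0}\subseteq spec\left(N\right)$ I would simply invoke the eigenvalue relation $\left(\ref{eq:H1}\right)$: for any unit vector $k\in\mathscr{L}$ (which exists since $\dim\mathscr{L}=\aleph_{0}$) and any $n\in\mathbb{N}_{0}$, the function $H_{n}\left(\Phi\left(k\right)\right)$ is a nonzero eigenvector of $N$ with eigenvalue $n$. Thus every $n\in\mathbb{N}_{0}$ is an eigenvalue, and since the spectrum is closed while $\mathbb{N}_{0}$ is already closed in $\mathbb{R}$, we obtain $\mathbb{N}_{0}\subseteq spec\left(N\right)$.

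For the reverse inclusion I would exhibit an orthogonal basis of $L^{2}\left(\Omega,\mathbb{P}\right)$ consisting of eigenvectors of $N$. Fix an ONB $\left\{ e_{i}\right\} _{i\in\mathbb{N}}$ of $\mathscr{L}$; then $\left\{ \Phi\left(e_{i}\right)\right\}$ is an independent family of standard Gaussians, and the products $\prod_{i}H_{n_{i}}\left(\Phi\left(e_{i}\right)\right)$, indexed by finitely supported sequences $\left(n_{i}\right)$ of nonnegative integers, form an orthogonal basis; this completeness is precisely the Wiener--It\^{o} chaos decomposition, which here follows from the unitary $W:\Gamma_{sym}\left(\mathscr{L}\right)\longrightarrow L^{2}\left(\Omega,\mathbb{P}\right)$ of \lemref{dense}. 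The key computation is that each such product is an eigenvector: using $N=\sum_{i}T_{e_{i}}^{*}T_{e_{i}}$ from $\left(\ref{eq:MN1}\right)$, the relation $T_{e_{i}}+T_{e_{i}}^{*}=M_{\Phi\left(e_{i}\right)}$ from $\left(\ref{eq:TTadj1}\right)$, and the fact that $T_{e_{i}}$ differentiates only the $i$-th factor (because $T_{e_{i}}\Phi\left(e_{j}\right)=\delta_{ij}$), one checks that $T_{e_{i}}^{*}T_{e_{i}}$ acts on the $i$-th factor through the one-dimensional operator $\delta$ of $\left(\ref{eq:H2}\right)$, giving $T_{e_{i}}^{*}T_{e_{i}}\prod_{j}H_{n_{j}}\left(\Phi\left(e_{j}\right)\right)=n_{i}\prod_{j}H_{n_{j}}\left(\Phi\left(e_{j}\right)\right)$. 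Summing over $i$ yields $N\prod_{j}H_{n_{j}}\left(\Phi\left(e_{j}\right)\right)=\big(\sum_{i}n_{i}\big)\prod_{j}H_{n_{j}}\left(\Phi\left(e_{j}\right)\right)$, so the eigenvalue is $\sum_{i}n_{i}\in\mathbb{N}_{0}$.

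Since these eigenvectors form an orthogonal basis, $N$ is diagonalized with all eigenvalues in $\mathbb{N}_{0}$; for any $\lambda\notin\mathbb{N}_{0}$ one has $\mathrm{dist}\left(\lambda,\mathbb{N}_{0}\right)>0$, so $N-\lambda$ is bounded below, and by selfadjointness it then has dense closed range, hence is boundedly invertible, placing $\lambda$ in the resolvent set. This gives $spec\left(N\right)\subseteq\mathbb{N}_{0}$ and completes the argument. I expect the main obstacle to be the completeness claim --- that the product-Hermite functions span $L^{2}\left(\Omega,\mathbb{P}\right)$ --- together with the bookkeeping that the formal factor-wise action of $T_{e_{i}}^{*}T_{e_{i}}$ is legitimate on the common dense domain $\mathscr{D}$; both are handled most cleanly by transporting the problem to symmetric Fock space via $W$, where $N$ becomes the standard number operator acting as multiplication by $n$ on the $n$-particle subspace.
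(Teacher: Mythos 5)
Your proposal is correct and follows essentially the same route as the paper, whose entire proof is the one-line observation that under the unitary $W$ of \lemref{dense} the operator $T^{*}\overline{T}$ becomes the Fock number operator. You simply fill in the details the paper omits, by diagonalizing $N$ on the product-Hermite (Wiener--It\^{o} chaos) basis, which is exactly the $W$-image of the standard eigenbasis of the number operator on $\Gamma_{sym}\left(\mathscr{L}\right)$.
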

\begin{proof}
We saw that the $L^{2}\left(\mathbb{P}\right)$-representation is
unitarily equivalent to the Fock vacuum representation, and $\pi\left(\mbox{Fock-number operator}\right)=T^{*}\overline{T}$. 
\end{proof}

\subsection{Infinite-dimensional $\Delta$ and $\nabla_{\Phi}$}
\begin{cor}
Let $\left(\mathscr{L},\Omega,\mathcal{F},\mathbb{P},\Phi\right)$
be a Gaussian field, and let $T$ be the Malliavin derivative, $L^{2}\left(\Omega,\mathbb{P}\right)\xrightarrow{\;T\;}L^{2}\left(\Omega,\mathbb{P}\right)\otimes\mathscr{L}$.
Then, for all $F=p\left(\Phi\left(h_{1}\right),\cdots,\Phi\left(h_{n}\right)\right)\in\mathscr{D}$
(see \defref{MD}), we have 
\[
T^{*}T\left(F\right)=-\underset{\Delta F}{\underbrace{\sum_{i=1}^{n}\frac{\partial^{2}p}{\partial x_{i}}\left(\Phi\left(h_{1}\right),\cdots,\Phi\left(h_{n}\right)\right)}}+\underset{\nabla_{\Phi}F}{\underbrace{\sum_{i=1}^{n}\Phi\left(h_{i}\right)\frac{\partial p}{\partial x_{i}}\left(\Phi\left(h_{1}\right),\cdots,\Phi\left(h_{n}\right)\right)}},
\]
which is abbreviated 
\begin{equation}
T^{*}T=-\Delta+\nabla_{\Phi}.\label{eq:Dd1}
\end{equation}

\end{cor}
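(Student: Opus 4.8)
The plan is to compute $T^{*}T(F)$ directly, first applying the explicit formula for $T$ from \defref{MD} and then evaluating $T^{*}$ on the resulting tensors through the containment $S\subset T^{*}$ established in \lemref{ST}. Write $\Phi(\vec h)=\left(\Phi(h_{1}),\dots,\Phi(h_{n})\right)$ for brevity. By \defref{MD},
\[
T(F)=\sum_{j=1}^{n}\frac{\partial p}{\partial x_{j}}(\Phi(\vec h))\otimes h_{j}\in\mathscr{D}\otimes\mathscr{L}=\mathscr{D}_{2}=dom(S),
\]
since each $\tfrac{\partial p}{\partial x_{j}}(\Phi(\vec h))$ again lies in $\mathscr{D}$. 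In particular $T(F)\in dom(T^{*})$, so $T^{*}T(F)$ is well defined on $\mathscr{D}$; and as $\overline{T}$ restricts to $T$ on $\mathscr{D}$, it agrees there with $T^{*}\overline{T}(F)$.

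Next I would apply $T^{*}$ summand by summand. Because $(S,T)$ is a symmetric pair with $S\subset T^{*}$, for every $G\in\mathscr{D}$ and $k\in\mathscr{L}$ we have $T^{*}(G\otimes k)=S(G\otimes k)=\Phi(k)\,G-\langle T(G),k\rangle_{\mathscr{L}}$ by \defref{pair}. Applying this with $G=\tfrac{\partial p}{\partial x_{j}}(\Phi(\vec h))$ and $k=h_{j}$ and summing over $j$ gives
\[
T^{*}T(F)=\sum_{j=1}^{n}\Phi(h_{j})\,\frac{\partial p}{\partial x_{j}}(\Phi(\vec h))-\sum_{j=1}^{n}\Big\langle T\big(\tfrac{\partial p}{\partial x_{j}}(\Phi(\vec h))\big),h_{j}\Big\rangle_{\mathscr{L}}.
\]
Expanding the second sum once more by \defref{MD} produces $\sum_{i,j}\partial_{i}\partial_{j}p(\Phi(\vec h))\,\langle h_{i},h_{j}\rangle_{\mathscr{L}}$.

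The first sum is already $\nabla_{\Phi}F$. To collapse the second sum into $\Delta F=\sum_{i}\partial_{i}^{2}p(\Phi(\vec h))$ I would reduce to the case of an orthonormal system $\langle h_{i},h_{j}\rangle=\delta_{ij}$, exactly as in Steps 1--2 of the proof of \thmref{MD}: the polynomial ring is invariant under the linear substitution diagonalizing the Gram matrix, so without loss of generality the $h_{i}$ may be taken orthonormal. Under that normalization $\sum_{i,j}\partial_{i}\partial_{j}p\,\langle h_{i},h_{j}\rangle$ reduces to $\sum_{i}\partial_{i}^{2}p(\Phi(\vec h))$, and the identity $T^{*}T=-\Delta+\nabla_{\Phi}$ follows.

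The main obstacle I anticipate is the bookkeeping around orthonormalization: the clean Laplacian term $\sum_{i}\partial_{i}^{2}p$ emerges only after diagonalizing the Gram matrix, since in a general representation one picks up the cross terms $\langle h_{i},h_{j}\rangle$. Thus the expression $-\Delta+\nabla_{\Phi}$ is genuinely tied to an orthonormal representation of $F$, and the substitution invariance from \thmref{MD} is what makes this harmless. The remaining points---that $T(F)$ lands in $dom(T^{*})$ and that $S$ computes $T^{*}$ on these tensors---are immediate from \lemref{ST} and \defref{pair} and require no estimates.
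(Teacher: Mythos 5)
Your proposal is correct and follows essentially the same route as the paper: the paper's (sketched) proof likewise reduces to an orthonormal system $\left\langle h_{i},h_{j}\right\rangle =\delta_{ij}$, applies the formula for $T$, and then evaluates $T^{*}$ on the resulting tensors, referring back to the proof of \thmref{MD} for the details you have written out. The only added value in your write-up is that you make explicit the step the paper leaves implicit --- namely that $T^{*}\left(G\otimes k\right)=S\left(G\otimes k\right)=\Phi\left(k\right)G-\left\langle T\left(G\right),k\right\rangle _{\mathscr{L}}$ on $\mathscr{D}\otimes\mathscr{L}$ via the symmetric pair of \defref{pair} --- and that you correctly flag that the clean form $-\Delta+\nabla_{\Phi}$ (rather than the Gram-weighted sum $\sum_{i,j}\partial_{i}\partial_{j}p\left\langle h_{i},h_{j}\right\rangle $) is tied to an orthonormal choice of the $h_{i}$.
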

\begin{flushleft}
(For the general theory of infinite-dimensional Laplacians, see e.g.,
\cite{MR1966892}.)
\par\end{flushleft}
\begin{proof}
(Sketch) We may assume the system $\left\{ h_{i}\right\} _{i=1}^{n}\subset\mathscr{L}$
is orthonormal, i.e., $\left\langle h_{i},h_{j}\right\rangle =\delta_{ij}$.
Hence, for $F$ $F=p\left(\Phi\left(h_{1}\right),\cdots,\Phi\left(h_{n}\right)\right)\in\mathscr{D}$,
we have 
\[
TF=\sum_{i=1}^{n}\frac{\partial p}{\partial x_{i}}\left(\Phi\left(h_{1}\right),\cdots,\Phi\left(h_{n}\right)\right)\otimes h_{i},\;\mbox{and}
\]
\begin{eqnarray*}
T^{*}T\left(F\right) & = & -\sum_{i=1}^{n}\frac{\partial^{2}p}{\partial x_{i}^{2}}\left(\Phi\left(h_{1}\right),\cdots,\Phi\left(h_{n}\right)\right)\\
 &  & +\sum_{i=1}^{n}\Phi\left(h_{i}\right)\frac{\partial p}{\partial x_{i}}\left(\Phi\left(h_{1}\right),\cdots,\Phi\left(h_{n}\right)\right)
\end{eqnarray*}
which is the assertion. For details, see the proof of \thmref{MD}.\end{proof}
\begin{defn}
\label{def:Mgrad}Let $\left(\mathscr{L},\Omega,\mathcal{F},\mathbb{P},\Phi\right)$
be a Gaussian field. On the dense domain $\mathscr{D}\subset L^{2}\left(\Omega,\mathbb{P}\right)$,
we define the $\Phi$-gradient by
\begin{equation}
\nabla_{\Phi}F=\sum_{i=1}^{n}\Phi\left(h_{i}\right)\frac{\partial p}{\partial x_{i}}\left(\Phi\left(h_{1}\right),\cdots,\Phi\left(h_{n}\right)\right),\label{eq:Mgrad}
\end{equation}
for all $F=p\left(\Phi\left(h_{1}\right),\cdots,\Phi\left(h_{n}\right)\right)\in\mathscr{D}$.
(Note that $\nabla_{\Phi}$ is an unbounded operator in $L^{2}\left(\Omega,\mathbb{P}\right)$,
and $dom\left(\nabla_{\Phi}\right)=\mathscr{D}$.)\end{defn}
\begin{lem}
Let $\nabla_{\Phi}$ be the $\Phi$-gradient from \defref{Mgrad}.
The adjoint operator $\nabla_{\Phi}^{*}$, i.e., the $\Phi$-divergence,
is given as follows:
\begin{equation}
\nabla_{\Phi}^{*}\left(G\right)=\left(\sum_{i=1}^{n}\Phi\left(h_{i}\right)^{2}-n\right)G-\nabla_{\Phi}\left(G\right),\quad\forall G\in\mathscr{D}.\label{eq:Mdiv}
\end{equation}
\end{lem}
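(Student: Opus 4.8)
The plan is to verify the identity by computing the $\mathscr{H}_{1}$-pairing $\left\langle \nabla_{\Phi}F,G\right\rangle _{\mathscr{H}_{1}}=\mathbb{E}\left(\nabla_{\Phi}\left(F\right)G\right)$ directly and showing that it equals $\left\langle F,R\left(G\right)\right\rangle _{\mathscr{H}_{1}}$, where $R\left(G\right)$ abbreviates the right-hand side of (\ref{eq:Mdiv}). First I would fix $F,G\in\mathscr{D}$ and, exactly as in the proof of \lemref{md}, choose a single finite system $\left\{ h_{i}\right\} _{i=1}^{n}\subset\mathscr{L}$ representing both, so that $F=p\left(\Phi\left(h_{1}\right),\cdots,\Phi\left(h_{n}\right)\right)$ and $G=q\left(\Phi\left(h_{1}\right),\cdots,\Phi\left(h_{n}\right)\right)$. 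Invoking Steps 1--2 of the proof of \thmref{MD}, I would diagonalize the Gram matrix and thereby assume without loss of generality that $\left\langle h_{i},h_{j}\right\rangle =\delta_{ij}$; then $\left(\Phi\left(h_{1}\right),\cdots,\Phi\left(h_{n}\right)\right)$ is a standard Gaussian vector with density $g_{n}\left(x\right)=\left(2\pi\right)^{-n/2}e^{-\frac{1}{2}\sum x_{i}^{2}}$ as in (\ref{eq:p6}).

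With this normalization the pairing becomes the Gaussian integral
\[
\left\langle \nabla_{\Phi}F,G\right\rangle _{\mathscr{H}_{1}}=\int_{\mathbb{R}^{n}}\Big(\sum_{i=1}^{n}x_{i}\frac{\partial p}{\partial x_{i}}\left(x\right)\Big)q\left(x\right)g_{n}\left(x\right)dx,
\]
and the key step is integration by parts in each variable $x_{i}$, using $\partial g_{n}/\partial x_{i}=-x_{i}g_{n}$ (cf. (\ref{eq:p7})). Moving the derivative off $p$ produces, for each $i$, the term $\int p\,\left[\left(x_{i}^{2}-1\right)q-x_{i}\partial_{x_{i}}q\right]g_{n}\,dx$; summing over $i$ and re-reading the polynomials as substitutions of $\Phi\left(h_{\bullet}\right)$ gives exactly $\mathbb{E}\left(F\left[\left(\sum_{i}\Phi\left(h_{i}\right)^{2}-n\right)G-\nabla_{\Phi}\left(G\right)\right]\right)$, which is (\ref{eq:Mdiv}). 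Equivalently, and perhaps more in keeping with the algebra just developed, one can avoid the explicit integral: writing $\nabla_{\Phi}=\sum_{i}M_{\Phi\left(h_{i}\right)}T_{h_{i}}$ on functions of $\Phi\left(h_{1}\right),\cdots,\Phi\left(h_{n}\right)$ and using self-adjointness of $M_{\Phi\left(h_{i}\right)}$, one reduces to evaluating $T_{h_{i}}^{*}\left(\Phi\left(h_{i}\right)G\right)=\left(M_{\Phi\left(h_{i}\right)}-T_{h_{i}}\right)\left(\Phi\left(h_{i}\right)G\right)$ via (\ref{eq:TTadj1}); the Leibniz rule (\ref{eq:dd1}) together with $T_{h_{i}}\Phi\left(h_{i}\right)=\left\langle h_{i},h_{i}\right\rangle =1$ then yields $\Phi\left(h_{i}\right)^{2}G-G-\Phi\left(h_{i}\right)T_{h_{i}}G$, and summation reproduces (\ref{eq:Mdiv}).

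The routine Gaussian calculus, one integration by parts per coordinate, is not where the difficulty lies. The main obstacle is the reduction to a common orthonormal coordinate system: one must first secure a single system representing both $F$ and $G$ (the device in the proof of \lemref{md}) and then check that diagonalizing the Gram matrix leaves both $\nabla_{\Phi}$ and the target expression intact, so that the bilinear form $\mathbb{E}\left(\nabla_{\Phi}\left(F\right)G\right)$ is read off unambiguously. Once the pairing identity $\left\langle \nabla_{\Phi}F,G\right\rangle _{\mathscr{H}_{1}}=\left\langle F,R\left(G\right)\right\rangle _{\mathscr{H}_{1}}$ is established for all $F,G\in\mathscr{D}$ sharing such a system, it expresses precisely that $R\left(G\right)=\nabla_{\Phi}^{*}\left(G\right)$ on the dense domain $\mathscr{D}$, which is the assertion of (\ref{eq:Mdiv}).
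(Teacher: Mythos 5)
Your main argument is essentially the paper's own proof: reduce $F$ and $G$ to a common orthonormal system $\left\{ h_{i}\right\}$, write $\mathbb{E}\left(\left(\nabla_{\Phi}F\right)G\right)$ as a standard Gaussian integral, and integrate by parts coordinate-wise using $\partial g_{n}/\partial x_{i}=-x_{i}g_{n}$ to land on $\mathbb{E}\bigl(F\bigl[\bigl(\sum_{i}\Phi\left(h_{i}\right)^{2}-n\bigr)G-\nabla_{\Phi}G\bigr]\bigr)$. The alternative algebraic route you sketch, writing $\nabla_{\Phi}=\sum_{i}M_{\Phi\left(h_{i}\right)}T_{h_{i}}$ and invoking (\ref{eq:TTadj1}) with the Leibniz rule, is also correct and a nice cross-check, but the core derivation matches the paper's.
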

\begin{proof}
Fix $F,G\in\mathscr{D}$ as in \defref{MD}. Then $\exists n\in\mathbb{N}$,
$p,q\in\mathbb{R}\left[x_{1},\cdots,x_{n}\right]$, and $\left\{ h_{i}\right\} _{i=1}^{n}\subset\mathscr{L}$,
such that 
\begin{align*}
F & =p\left(\Phi\left(h_{1}\right),\cdots,\Phi\left(h_{n}\right)\right)\\
G & =q\left(\Phi\left(h_{1}\right),\cdots,\Phi\left(h_{n}\right)\right).
\end{align*}
Further assume that $\left\langle h_{i},h_{j}\right\rangle =\delta_{ij}$. 

In the calculation below, we use the following notation: $x=\left(x_{1},\cdots,x_{n}\right)\in\mathbb{R}^{n}$,
$dx=dx_{1}\cdots dx_{n}$ = Lebesgue measure, and $g_{n}=g^{G_{n}}$
= standard Gaussian distribution in $\mathbb{R}^{n}$, see (\ref{eq:p6}). 

Then, we have
\begin{eqnarray*}
 &  & \mathbb{E}\left(\left(\nabla_{\Phi}F\right)G\right)\\
 & = & \sum_{i=1}^{n}\mathbb{E}\left(\Phi\left(h_{i}\right)\frac{\partial p}{\partial x_{i}}\left(\Phi\left(h_{1}\right),\cdots,\Phi\left(h_{n}\right)\right)q\left(\Phi\left(h_{1}\right),\cdots,\Phi\left(h_{n}\right)\right)\right)\\
 & = & \sum_{i=1}^{n}\int_{\mathbb{R}^{n}}x_{i}\frac{\partial p}{\partial x_{i}}\left(x\right)q\left(x\right)g_{n}\left(x\right)dx\\
 & = & -\sum_{i=1}^{n}\int_{\mathbb{R}^{n}}p\left(x\right)\frac{\partial}{\partial x_{i}}\left(x_{i}q\left(x\right)g_{n}\left(x\right)\right)dx\\
 & = & -\sum_{i=1}^{n}\int_{\mathbb{R}^{n}}p\left(x\right)\left(q\left(x\right)+x_{i}\frac{\partial q}{\partial x_{i}}\left(x\right)-q\left(x\right)x_{i}^{2}\right)g_{n}\left(x\right)dx\quad\left(\frac{\partial g_{n}}{\partial x_{i}}=-x_{i}g_{n}\right)\\
 & = & \sum_{i=1}^{n}\mathbb{E}\left(FG\Phi\left(h_{i}\right)^{2}\right)-n\mathbb{E}\left(FG\right)-\mathbb{E}\left(F\nabla_{\Phi}G\right)\\
 & = & \mathbb{E}\left(FG\left(\sum\nolimits _{i=1}^{n}\Phi\left(h_{i}\right)^{2}-n\right)\right)-\mathbb{E}\left(F\nabla_{\Phi}G\right),
\end{eqnarray*}
which is the desired conclusion in (\ref{eq:Mdiv}).\end{proof}
\begin{rem}
Note $T_{k}^{*}$ is \emph{not} a derivation. In fact, we have 
\[
T_{k}^{*}\left(FG\right)=T_{k}^{*}\left(F\right)G+F\,T_{k}^{*}\left(G\right)-\Phi\left(k\right)FG,
\]
for all $F,G\in\mathscr{D}$, and all $k\in\mathscr{L}$. 

However, the divergence operator $\nabla_{\Phi}$ does satisfy the
Leibniz rule, i.e., 
\[
\nabla_{\Phi}\left(FG\right)=\left(\nabla_{\Phi}F\right)G+F\left(\nabla_{\Phi}G\right),\quad\forall F,G\in\mathscr{D}.
\]

\end{rem}

\subsection{Realization of the operators}
\begin{thm}
\label{thm:re}Let $\omega_{Fock}$ be the Fock state on $CCR\left(\mathscr{L}\right)$,
see (\ref{eq:cr4})-(\ref{eq:cr5}), and let $\pi_{F}$ denote the
corresponding (Fock space) representation, acting on $\Gamma_{sym}\left(\mathscr{L}\right)$,
see \lemref{dense}. Let $W:\Gamma_{sym}\left(\mathscr{L}\right)\longrightarrow L^{2}\left(\Omega,\mathbb{P}\right)$
be the isomorphism given by 
\begin{equation}
W\left(e^{k}\right):=e^{\Phi\left(k\right)-\frac{1}{2}\left\Vert k\right\Vert _{\mathscr{L}}^{2}},\quad k\in\mathscr{L}.\label{eq:re1}
\end{equation}
Here $L^{2}\left(\Omega,\mathbb{P}\right)$ denotes the Gaussian Hilbert
space corresponding to $\mathscr{L}$; see \defref{GH}. For vectors
$k\in\mathscr{L}$, let $T_{k}$ denote the Malliavin derivative in
the direction $k$; see \defref{MD}.

We then have the following realizations:
\begin{equation}
T_{k}=W\pi_{F}\left(a\left(k\right)\right)W^{*},\text{ and}\label{eq:re2}
\end{equation}
\begin{equation}
M_{\Phi\left(k\right)}-T_{k}=W\pi_{F}\left(a^{*}\left(k\right)\right)W^{*};\label{eq:re3}
\end{equation}
valid for all $k\in\mathscr{L}$, where the two identities (\ref{eq:re2})-(\ref{eq:re3})
hold on the dense domain $\mathscr{D}$ from \lemref{dense}.\end{thm}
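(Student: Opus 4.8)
The plan is to verify both operator identities first on the total family of exponential (coherent) vectors and then to propagate them to the polynomial domain $\mathscr{D}$. The only input I would take from Fock-space combinatorics is the action of the annihilation and creation operators on coherent vectors $e^{h}\in\Gamma_{sym}\left(\mathscr{L}\right)$: since $\langle e^{h_{1}},e^{h_{2}}\rangle_{\Gamma_{sym}\left(\mathscr{L}\right)}=e^{\langle h_{1},h_{2}\rangle_{\mathscr{L}}}$ and the Fock vacuum is annihilated by $a\left(k\right)$, the coherent-eigenvector property together with the concrete realization of $\pi_{F}$ on symmetric Fock space gives
\begin{equation}
\pi_{F}\left(a\left(k\right)\right)e^{h}=\langle k,h\rangle_{\mathscr{L}}\,e^{h},\qquad\pi_{F}\left(a^{*}\left(k\right)\right)e^{h}=\frac{d}{dt}\Big|_{t=0}e^{h+tk}.\label{eq:plan-fock}
\end{equation}

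Next I transport \eqref{eq:plan-fock} through the unitary $W$ of \eqref{eq:re1}. Because $W$ is unitary, the right-hand sides of \eqref{eq:re2}--\eqref{eq:re3} act on the total family $W\left(e^{h}\right)=e^{\Phi\left(h\right)-\frac{1}{2}\left\Vert h\right\Vert_{\mathscr{L}}^{2}}$ as $W\pi_{F}\left(a\left(k\right)\right)W^{*}$ and $W\pi_{F}\left(a^{*}\left(k\right)\right)W^{*}$. For the annihilation side this yields $\langle k,h\rangle_{\mathscr{L}}\,W\left(e^{h}\right)$ at once. For the creation side I would differentiate the explicit expression
\[
W\left(e^{h+tk}\right)=\exp\!\Big(\Phi\left(h\right)+t\Phi\left(k\right)-\tfrac{1}{2}\left\Vert h\right\Vert_{\mathscr{L}}^{2}-t\langle h,k\rangle_{\mathscr{L}}-\tfrac{t^{2}}{2}\left\Vert k\right\Vert_{\mathscr{L}}^{2}\Big)
\]
at $t=0$, using the linearity of $\Phi$, which produces $\left(\Phi\left(k\right)-\langle k,h\rangle_{\mathscr{L}}\right)W\left(e^{h}\right)$.

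To compute the left-hand sides on the same vectors I invoke the earlier corollary \eqref{eq:Texp1}, namely $\overline{T}\left(e^{\Phi\left(h\right)}\right)=e^{\Phi\left(h\right)}\otimes h$, which gives $T\left(W\left(e^{h}\right)\right)=W\left(e^{h}\right)\otimes h$ and hence $T_{k}\left(W\left(e^{h}\right)\right)=\langle T\left(W\left(e^{h}\right)\right),k\rangle_{\mathscr{L}}=\langle k,h\rangle_{\mathscr{L}}\,W\left(e^{h}\right)$. Consequently $\left(M_{\Phi\left(k\right)}-T_{k}\right)\left(W\left(e^{h}\right)\right)=\left(\Phi\left(k\right)-\langle k,h\rangle_{\mathscr{L}}\right)W\left(e^{h}\right)$. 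Comparing with the previous paragraph, both \eqref{eq:re2} and \eqref{eq:re3} already hold on every exponential $W\left(e^{h}\right)$, and by \lemref{dense} these span a dense subspace.

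The remaining, and most delicate, step is to pass from the exponentials to the polynomial domain $\mathscr{D}$, since each $W\left(e^{h}\right)$ lies in $dom\left(\overline{T}\right)$ rather than in $\mathscr{D}$ itself. Here I would use that $W\left(e^{th}\right)=e^{t\Phi\left(h\right)-\frac{t^{2}}{2}\left\Vert h\right\Vert_{\mathscr{L}}^{2}}$ is the generating function of the Wick/Hermite powers of $\Phi\left(h\right)$: applying $\frac{d^{n}}{dt^{n}}\big|_{0}$ produces, for each $n$, an element of $\mathscr{D}$, and by triangularity these elements span the same space as the monomials $\Phi\left(h\right)^{n}$, hence all of $\mathscr{D}$. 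Since $T_{k}$, $M_{\Phi\left(k\right)}$ and the two Fock operators are closable and the exponential series converges in the relevant graph norms, each operator interchanges with the $t$-differentiation; differentiating the identities established on $W\left(e^{th}\right)$ therefore yields them on a spanning set of $\mathscr{D}$, and linearity completes the argument. The main obstacle is precisely this interchange of the unbounded operators with the parameter differentiation, which must be justified via closability together with the fact that $span\left\{W\left(e^{h}\right)\right\}$ is a core; everything else reduces to the direct computations above.
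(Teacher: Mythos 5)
Your proposal is correct and follows essentially the same route as the paper: both verify the intertwining identities on the total family of exponential (coherent) vectors, using $\pi_{F}\left(a\left(k\right)\right)e^{l}=\left\langle k,l\right\rangle _{\mathscr{L}}e^{l}$ on the Fock side and $T_{k}\big(e^{\Phi\left(l\right)}\big)=\left\langle k,l\right\rangle _{\mathscr{L}}e^{\Phi\left(l\right)}$ on the Gaussian side, transported through the unitary $W$. You go somewhat further than the paper's own proof, which only writes out the computation for (\ref{eq:re2}) and leaves (\ref{eq:re3}) to the reader, whereas you also carry out the creation-operator case via $\frac{d}{dt}\big|_{t=0}W\left(e^{h+tk}\right)$ and sketch the (legitimately needed, but omitted in the paper) passage from the exponential vectors back to the polynomial domain $\mathscr{D}$.
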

\begin{rem}
The two formulas (\ref{eq:re2})-(\ref{eq:re3}) take the following
form, see Figs \ref{fig:o1}-\ref{fig:o2}.
\end{rem}
In the proof of the theorem, we make use of the following:
\begin{lem}
Let $\mathscr{L}$, $\mbox{CCR}\left(\mathscr{L}\right)$, and $\omega_{F}$
(= the Fock vacuum state) be as above. Then, for all $n,m\in\mathbb{N}$,
and all $h_{1},\cdots,h_{n}$, $k_{1},\cdots,k_{m}\in\mathscr{L}$,
we have the following identity:
\begin{eqnarray}
 &  & \omega_{F}\left(a\left(h_{1}\right)\cdots,a\left(h_{n}\right)a^{*}\left(k_{m}\right)\cdots a\left(k_{1}\right)\right)\nonumber \\
 & = & \delta_{n,m}\sum_{s\in S_{n}}\left\langle h_{1},k_{s\left(1\right)}\right\rangle _{\mathscr{L}}\left\langle h_{2},k_{s\left(2\right)}\right\rangle _{\mathscr{L}}\cdots\left\langle h_{n},k_{s\left(n\right)}\right\rangle _{\mathscr{L}}\label{eq:re3a}
\end{eqnarray}
where the summation on the RHS in (\ref{eq:re3a}) is over the symmetric
group $S_{n}$ of all permutations of $\left\{ 1,2,\cdots,n\right\} $.
(In the case of the CARs, the analogous expression on the RHS will
instead be a determinant.)\end{lem}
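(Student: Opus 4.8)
The plan is to prove the identity (\ref{eq:re3a}) by induction on the number $n$ of annihilation operators, using only the CCR relations (\ref{eq:li5}) together with the two defining properties (\ref{eq:cr4})--(\ref{eq:cr5}) of the Fock state. The driving mechanism is the standard one: commute the innermost annihilation operator $a(h_n)$ rightward through the block of creation operators, where it eventually annihilates the vacuum.

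First I would record two vacuum facts. Since $\omega_F$ is a state, the Cauchy--Schwarz inequality $\left|\omega_F(a^*(h)A)\right|^2\le\omega_F(a^*(h)a(h))\,\omega_F(A^*A)$ combined with (\ref{eq:cr5}) gives $\omega_F(a^*(h)A)=0$ for every $A$; dually, using $\omega_F(X^*)=\overline{\omega_F(X)}$, also $\omega_F(A\,a(h))=0$. Thus any monomial beginning (on the left) with a creation operator, or ending (on the right) with an annihilation operator, has zero expectation. These settle the base cases: for $n=0<m$ the word begins with $a^*(k_m)$, for $m=0<n$ it ends with $a(h_n)$, so in both cases the expectation is $0$, matching $\delta_{n,m}$; and for $n=m=0$ both sides equal $1$.

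For the inductive step, assume (\ref{eq:re3a}) for $n-1$ and all $m$. Applying $a(h_n)a^*(k)=a^*(k)a(h_n)+\langle h_n,k\rangle_{\mathscr{L}}\mathbbm{1}$ from (\ref{eq:li5}) repeatedly to push $a(h_n)$ to the right, I obtain
\begin{align*}
&a(h_1)\cdots a(h_n)\,a^*(k_m)\cdots a^*(k_1)\\
&\quad=\sum_{j=1}^{m}\langle h_n,k_j\rangle_{\mathscr{L}}\,a(h_1)\cdots a(h_{n-1})\,a^*(k_m)\cdots\widehat{a^*(k_j)}\cdots a^*(k_1)\\
&\qquad+a(h_1)\cdots a(h_{n-1})\,a^*(k_m)\cdots a^*(k_1)\,a(h_n),
\end{align*}
where the hat denotes omission of that factor. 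Applying $\omega_F$ annihilates the final term (it ends in $a(h_n)$), leaving a sum of $(n-1)$-fold vacuum expectations to which the inductive hypothesis applies.

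It remains to reassemble the combinatorics. If $n\neq m$, every surviving term has $n-1\neq m-1$ and hence vanishes by induction, giving $\delta_{n,m}=0$. If $n=m$, the inductive hypothesis turns the $j$-th term into $\langle h_n,k_j\rangle_{\mathscr{L}}$ times the sum over all bijections of $\{1,\dots,n-1\}$ onto $\{1,\dots,n\}\setminus\{j\}$ of the associated products of inner products. Stratifying an arbitrary permutation $s\in S_n$ by its value $s(n)=j$ matches this exactly, and summing over $j$ recovers $\sum_{s\in S_n}\prod_{i=1}^{n}\langle h_i,k_{s(i)}\rangle_{\mathscr{L}}$. The one point needing care is this bookkeeping step---aligning the ``delete $k_j$'' reindexing on the operator side with the ``$s(n)=j$'' decomposition of $S_n$ on the combinatorial side---but it is routine once the recursion is in place. (In the CAR case the same recursion runs with signs, yielding a determinant rather than this permanent.)
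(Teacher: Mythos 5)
Your proof is correct. The paper itself gives no argument here---it leaves the lemma to the reader and cites \cite{MR611508}---so there is nothing to compare against except the standard argument, which is exactly what you have reconstructed: the Wick-type recursion obtained by commuting $a(h_n)$ rightward through the block of creation operators and letting the vacuum property kill the boundary term, followed by induction on $n$. Two small remarks. First, you have silently corrected a typo in the statement (the final factor should read $a^{*}(k_{1})$, not $a(k_{1})$); that is clearly the intended reading, as the surrounding discussion of two-point functions confirms. Second, your derivation of the one-sided facts $\omega_F(a^*(h)A)=0$ and $\omega_F(A\,a(h))=0$ from positivity of the state via Cauchy--Schwarz is a genuine and necessary ingredient: the vacuum condition (\ref{eq:cr5}) by itself only concerns the quadratic elements $a^*(h)a(h)$, and without the state axioms it would not propagate to arbitrary monomials. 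The final bookkeeping---stratifying $s\in S_n$ by the value $s(n)=j$ to match the ``delete $k_j$'' terms of the recursion---is routine and correctly identified as the only point needing care; it yields the permanent as claimed.
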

\begin{proof}
We leave the proof of the lemma to the reader; it is also contained
in \cite{MR611508}.\end{proof}
\begin{rem}
In physics-lingo, we say that the vacuum-state $\omega_{F}$ is determined
by its \emph{two-point functions}
\begin{align*}
\omega_{F}\left(a\left(h\right)a^{*}\left(k\right)\right) & =\left\langle h,k\right\rangle _{\mathscr{L}},\;\mbox{and}\\
\omega_{F}\left(a^{*}\left(k\right)a\left(h\right)\right) & =0,\quad\forall h,k\in\mathscr{L}.
\end{align*}
\end{rem}
\begin{proof}[Proof of \thmref{re}]
We shall only give the details for formula (\ref{eq:re2}). The modifications
needed for (\ref{eq:re3}) will be left to the reader. 

Since $W$ in (\ref{eq:re1}) is an isomorphic isomorphism, i.e.,
a unitary operator from $\Gamma_{sym}\left(\mathscr{L}\right)$ onto
$L^{2}\left(\Omega,\mathbb{P}\right)$, we may show instead that
\begin{equation}
T_{k}W=W\pi_{F}\left(a\left(k\right)\right)\label{eq:re4}
\end{equation}
holds on the dense subspace of all finite symmetric tensor polynomials
in $\Gamma_{sym}\left(\mathscr{L}\right)$; or equivalently on the
dense subspace in $\Gamma_{sym}\left(\mathscr{L}\right)$ spanned
by
\begin{equation}
\Gamma\left(l\right):=e^{l}:=\sum_{n=0}^{\infty}\frac{l^{\otimes n}}{\sqrt{n!}}\in\Gamma_{sym}\left(\mathscr{L}\right),\;l\in\mathscr{L};\label{eq:re5}
\end{equation}
see also \lemref{dense}. We now compute (\ref{eq:re4}) on the vectors
$e^{l}$ in (\ref{eq:re5}):
\begin{alignat*}{2}
T_{k}W\left(e^{l}\right) & =T_{k}\left(e^{\Phi\left(k\right)-\frac{1}{2}\left\Vert k\right\Vert _{\mathscr{L}}^{2}}\right) &  & \text{(by Lemma \ref{lem:dense})}\\
 & =e^{-\frac{1}{2}\left\Vert k\right\Vert _{\mathscr{L}}^{2}}T_{k}\big(e^{\Phi\left(k\right)}\big)\\
 & =e^{-\frac{1}{2}\left\Vert k\right\Vert _{\mathscr{L}}^{2}}\left\langle k,l\right\rangle _{\mathscr{L}}e^{\Phi\left(l\right)} & \quad & \text{(by Remark \ref{rem:MD})}\\
 & =W\pi_{F}\left(a\left(k\right)\right)\left(e^{l}\right),
\end{alignat*}
valid for all $k,l\in\mathscr{L}$. 
\end{proof}
\begin{figure}[H]
\[
\xymatrix{\Gamma_{sym}\left(\mathscr{L}\right)\ar[rr]^{W}\ar[d]_{\pi_{F}\left(a\left(k\right)\right)} &  & L^{2}\left(\Omega,\mathbb{P}\right)\ar[d]^{T_{k}}\\
\Gamma_{sym}\left(\mathscr{L}\right)\ar[rr]_{W} &  & L^{2}\left(\Omega,\mathbb{P}\right)
}
\]

\caption{\label{fig:o1}The first operator.}
\end{figure}

\begin{figure}[H]
\[
\xymatrix{\Gamma_{sym}\left(\mathscr{L}\right)\ar[rr]^{W}\ar[d]_{\pi_{F}\left(a^{*}\left(k\right)\right)} &  & L^{2}\left(\Omega,\mathbb{P}\right)\ar[d]^{M_{\Phi\left(k\right)}-T_{k}}\\
\Gamma_{sym}\left(\mathscr{L}\right)\ar[rr]_{W} &  & L^{2}\left(\Omega,\mathbb{P}\right)
}
\]

\caption{\label{fig:o2}The second operator.}
\end{figure}

\subsection{The unitary group}

For a given Gaussian field $\left(\mathscr{L},\Omega,\mathcal{F},\mathbb{P},\Phi\right)$,
we studied the $\mbox{CCR}\left(\mathscr{L}\right)$-algebra, and
the operators associated with its Fock-vacuum representation. 

From the determination of $\Phi$ by 
\begin{equation}
\mathbb{E}\big(e^{i\Phi\left(k\right)}\big)=e^{-\frac{1}{2}\left\Vert k\right\Vert _{\mathscr{L}}^{2}},\;k\in\mathscr{L};\label{eq:u1}
\end{equation}
we deduce that $\left(\Omega,\mathcal{F},\mathbb{P},\Phi\right)$
satisfies the following covariance with respect to the group $\mbox{Uni}\left(\mathscr{L}\right):=G\left(\mathscr{L}\right)$
of all unitary operators $U:\mathscr{L}\longrightarrow\mathscr{L}$. 

We shall need the following:
\begin{defn}
We say that $\alpha\in Aut\left(\Omega,\mathcal{F},\mathbb{P}\right)$
iff the following three conditions hold:
\begin{enumerate}
\item \label{enu:u1}$\alpha:\Omega\longrightarrow\Omega$ is defined $\mathbb{P}$
a.e. on $\Omega$, and $\mathbb{P}\left(\alpha\left(\Omega\right)\right)=1$.
\item $\mathcal{F}=\alpha\left(\mathcal{F}\right)$; more precisely, $\mathcal{F}=\left\{ \alpha^{-1}\left(B\right)\mid B\in\mathcal{F}\right\} $
where 
\begin{equation}
\alpha^{-1}\left(B\right)=\left\{ \omega\in\Omega\mid\alpha\left(\omega\right)\in B\right\} .\label{eq:u2}
\end{equation}

\item \label{enu:u3}$\mathbb{P}=\mathbb{P}\circ\alpha^{-1}$, i.e., $\alpha$
is a measure preserving automorphism.
\end{enumerate}

Note that when (\ref{enu:u1})-(\ref{enu:u3}) hold for $\alpha$,
then we have the unitary operators $U_{\alpha}$ in $L^{2}\left(\Omega,\mathcal{F},\mathbb{P}\right)$,
\begin{equation}
U_{\alpha}F=F\circ\alpha,\label{eq:u3}
\end{equation}
or more precisely, 
\[
\left(U_{\alpha}F\right)\left(\omega\right)=F\left(\alpha\left(\omega\right)\right),\;\mbox{a.e.}\;\omega\in\Omega,
\]
valid for all $F\in L^{2}\left(\Omega,\mathcal{F},\mathbb{P}\right)$.

\end{defn}
\begin{thm}
~
\begin{enumerate}
\item For every $U\in G\left(\mathscr{L}\right)$ (=\textup{ the unitary
group of $\mathscr{L}$}), there is a unique $\alpha\in Aut\left(\Omega,\mathcal{F},\mathbb{P}\right)$
s.t. 
\begin{equation}
\Phi\left(Uk\right)=\Phi\left(k\right)\circ\alpha,\label{eq:u4}
\end{equation}
or equivalently (see (\ref{eq:u3}))
\begin{equation}
\Phi\left(Uk\right)=U_{\alpha}\left(\Phi\left(k\right)\right),\;\forall k\in\mathscr{L}.\label{eq:u5}
\end{equation}

\item If $T:L^{2}\left(\Omega,\mathbb{P}\right)\longrightarrow L^{2}\left(\Omega,\mathbb{P}\right)\otimes\mathscr{L}$
is the Malliavin derivative from \defref{MD}, then we have:
\begin{equation}
TU_{\alpha}=\left(U_{\alpha}\otimes U\right)T.\label{eq:u6}
\end{equation}

\end{enumerate}
\end{thm}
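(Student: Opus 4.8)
The plan is to build a candidate implementing unitary $V$ from the second quantization of $U$ through the Wiener isomorphism $W$ of \thmref{re}, to recover a measure-preserving point map $\alpha$ from $V$ by a point-realization argument (so that $V=U_{\alpha}$), and finally to read off the intertwining relation (\ref{eq:u6}) by a direct computation on the polynomial fields $\mathscr{D}$. First I would let $\Gamma\left(U\right)$ denote the second quantization of $U$, i.e. the unique unitary of $\Gamma_{sym}\left(\mathscr{L}\right)$ acting on the exponential vectors (\ref{eq:re5}) by $\Gamma\left(U\right)e^{l}=e^{Ul}$; it is isometric there because $\big\langle e^{Uk},e^{Ul}\big\rangle=e^{\langle Uk,Ul\rangle}=e^{\langle k,l\rangle}=\big\langle e^{k},e^{l}\big\rangle$, $U$ being inner-product preserving, so it extends to a unitary. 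Put $V:=W\Gamma\left(U\right)W^{*}$ on $L^{2}\left(\Omega,\mathbb{P}\right)$. Combining (\ref{eq:re1}) with $\left\Vert Uk\right\Vert=\left\Vert k\right\Vert$ gives
\[
V\big(e^{\Phi\left(k\right)}\big)=e^{\frac{1}{2}\left\Vert k\right\Vert ^{2}}\,W\Gamma\left(U\right)\left(e^{k}\right)=e^{\frac{1}{2}\left\Vert k\right\Vert ^{2}}\,W\left(e^{Uk}\right)=e^{\Phi\left(Uk\right)},\quad\forall k\in\mathscr{L}.
\]
Substituting $\sum_{j}s_{j}h_{j}$ for $k$, using linearity of $\Phi$, and differentiating in the real parameters $s_{j}$ at $s=0$ (legitimate as $V$ is bounded) yields $V\big(p\left(\Phi\left(h_{1}\right),\cdots,\Phi\left(h_{n}\right)\right)\big)=p\left(\Phi\left(Uh_{1}\right),\cdots,\Phi\left(Uh_{n}\right)\right)$ for every $p\in\mathbb{R}\left[x_{1},\cdots,x_{n}\right]$. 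In particular $V\mathscr{D}=\mathscr{D}$, the restriction of $V$ to $\mathscr{D}$ is an algebra homomorphism with $V\Phi\left(k\right)=\Phi\left(Uk\right)$, and $V\mathbbm{1}=\mathbbm{1}$.

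To recover the point map, I would note the conjugation relation $VM_{\Phi\left(h\right)}V^{*}=M_{\Phi\left(Uh\right)}$, immediate from the previous display on $\mathscr{D}$ (absorb the factor $\Phi\left(h\right)$ into the polynomial). Since $V$ is unitary and fixes $\mathbbm{1}$, we have $\mathbb{E}\left(VF\right)=\big\langle VF,\mathbbm{1}\big\rangle=\big\langle F,V^{*}\mathbbm{1}\big\rangle=\mathbb{E}\left(F\right)$, as $V^{*}=W\Gamma\left(U^{-1}\right)W^{*}$ also fixes $\mathbbm{1}$. Because the spectral projections of the self-adjoint multiplication operators $\left\{ M_{\Phi\left(h\right)}\mid h\in\mathscr{L}\right\}$ generate $L^{\infty}\left(\Omega,\mathcal{F},\mathbb{P}\right)$ as a von Neumann algebra (here $\mathcal{F}=\sigma\left(\Phi\left(h\right):h\in\mathscr{L}\right)$), conjugation by $V$ restricts there to a normal, $\mathbb{P}$-preserving $*$-automorphism. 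On the standard Gaussian probability space $\left(\Omega,\mathcal{F},\mathbb{P}\right)$ every such automorphism is implemented by a measure-preserving point transformation, so there is a unique (mod $\mathbb{P}$-null sets) $\alpha\in Aut\left(\Omega,\mathcal{F},\mathbb{P}\right)$ with $VF=F\circ\alpha$; that is, $V=U_{\alpha}$ in the notation (\ref{eq:u3}). Taking $F=\Phi\left(k\right)$ and using $V\Phi\left(k\right)=\Phi\left(Uk\right)$ then gives (\ref{eq:u4})--(\ref{eq:u5}). Uniqueness of $\alpha$ is clear, since any two solutions agree on every $\Phi\left(k\right)$ and $\mathcal{F}$ is generated by $\left\{ \Phi\left(k\right)\mid k\in\mathscr{L}\right\}$. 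I expect this passage -- from the abstract multiplicative unitary $V$ to an \emph{honest} measure-preserving point map $\alpha$ -- to be the main obstacle, being exactly where standardness of the Gaussian space is used.

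For part (2) I would check (\ref{eq:u6}) on the core $\mathscr{D}$, which uses only the unitary $U_{\alpha}=V$ and not the point realization. For $F=p\left(\Phi\left(h_{1}\right),\cdots,\Phi\left(h_{n}\right)\right)\in\mathscr{D}$ paragraph one gives $U_{\alpha}F=p\left(\Phi\left(Uh_{1}\right),\cdots,\Phi\left(Uh_{n}\right)\right)\in\mathscr{D}$, so by \defref{MD}
\[
T\left(U_{\alpha}F\right)=\sum_{j=1}^{n}\frac{\partial p}{\partial x_{j}}\left(\Phi\left(Uh_{1}\right),\cdots,\Phi\left(Uh_{n}\right)\right)\otimes Uh_{j}.
\]
On the other hand, applying $U_{\alpha}\otimes U$ to $TF=\sum_{j=1}^{n}\frac{\partial p}{\partial x_{j}}\left(\Phi\left(h_{1}\right),\cdots,\Phi\left(h_{n}\right)\right)\otimes h_{j}$, and using $U_{\alpha}\big(\tfrac{\partial p}{\partial x_{j}}\left(\Phi\left(h_{1}\right),\cdots,\Phi\left(h_{n}\right)\right)\big)=\tfrac{\partial p}{\partial x_{j}}\left(\Phi\left(Uh_{1}\right),\cdots,\Phi\left(Uh_{n}\right)\right)$, reproduces the very same sum; hence $TU_{\alpha}=\left(U_{\alpha}\otimes U\right)T$ on $\mathscr{D}$. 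Since $\mathscr{D}$ is a core for $\overline{T}$ and both $U_{\alpha}$ and $U_{\alpha}\otimes U$ are unitary, hence preserve graph-closures, the identity extends from $\mathscr{D}$ to $dom\left(\overline{T}\right)$; the only point needing verification for this step is that $U_{\alpha}$ maps $dom\left(T\right)=\mathscr{D}$ onto itself, which was recorded in paragraph one.
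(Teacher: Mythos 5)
Your proposal is correct, but it is substantially more detailed than, and in places genuinely different from, the paper's own argument. For part (1) the paper simply declares the existence of $\alpha$ ``immediate from the above discussion,'' i.e.\ from the determination of $\mathbb{P}$ by the characteristic functional (\ref{eq:u1}) and its $U$-invariance; you instead construct the implementing unitary explicitly as $V=W\Gamma\left(U\right)W^{*}$ via second quantization and the Wiener isomorphism (\ref{eq:re1}), and then invoke point-realization of measure-algebra automorphisms on a standard probability space to produce an honest point map $\alpha$ with $V=U_{\alpha}$. That extra step is real content: you correctly identify that passing from the multiplicative unitary to a genuine $\alpha\in Aut\left(\Omega,\mathcal{F},\mathbb{P}\right)$ uses standardness of the Gaussian space (satisfied in the paper's realizations, e.g.\ $\Omega=\mathcal{S}'$ or Wiener space), a hypothesis the paper never surfaces. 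For part (2) the paper verifies (\ref{eq:u6}) on the exponential vectors $e^{\Phi\left(k\right)-\frac{1}{2}\left\Vert k\right\Vert _{\mathscr{L}}^{2}}$, which strictly speaking lie in $dom\left(\overline{T}\right)$ rather than in $\mathscr{D}=dom\left(T\right)$, so its computation really concerns the closure and needs a density/closability remark to descend to $T$; your computation works directly on the polynomial domain $\mathscr{D}$ using (\ref{eq:mc4}), which matches the statement of the theorem as written and sidesteps that gap, at the cost of implicitly relying on the well-definedness of $T$ on $\mathscr{D}$ (Remark \ref{rem:MD}) when you rewrite $U_{\alpha}F$ in terms of the vectors $Uh_{j}$. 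The one step you might spell out further is the differentiation in the parameters $s_{j}$: boundedness of $V$ is not quite enough by itself; you also need that $s\mapsto e^{\sum_{j}s_{j}\Phi\left(h_{j}\right)}$ is differentiable in the $L^{2}\left(\Omega,\mathbb{P}\right)$ norm, which holds because Gaussian variables have finite exponential moments. These are refinements, not gaps; the proof as proposed is sound.
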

\begin{proof}
The first conclusion in the theorem is immediate from the above discussion,
and we now turn to the covariance formula (\ref{eq:u6}). 

Note that (\ref{eq:u6}) involves unbounded operators, and it holds
on the dense subspace $\mathscr{D}$ in $L^{2}\left(\Omega,\mathbb{P}\right)$
from \lemref{dense}. Hence it is enough to verify (\ref{eq:u6})
on vectors in $L^{2}\left(\Omega,\mathbb{P}\right)$ of the form $e^{\Phi\left(k\right)-\frac{1}{2}\left\Vert k\right\Vert _{\mathscr{L}}^{2}}$,
$k\in\mathscr{L}$. Using \lemref{dense}, we then get:
\begin{alignat*}{2}
\mbox{LHS}_{\left(\ref{eq:u6}\right)}\big(e^{\Phi\left(k\right)-\frac{1}{2}\left\Vert k\right\Vert _{\mathscr{L}}^{2}}\big) & =e^{-\frac{1}{2}\left\Vert k\right\Vert _{\mathscr{L}}^{2}}T\big(e^{\Phi\left(Uk\right)}\big) &  & \text{(by \ensuremath{\left(\ref{eq:u4}\right)})}\\
 & =e^{-\frac{1}{2}\left\Vert Uk\right\Vert _{\mathscr{L}}^{2}}e^{\Phi\left(Uk\right)}\otimes\left(Uk\right) &  & \text{(by Remark \ref{rem:MD})}\\
 & =\left(U_{\alpha}\otimes U\right)\big(e^{\Phi\left(k\right)-\frac{1}{2}\left\Vert k\right\Vert _{\mathscr{L}}^{2}}\big) & \quad\\
 & =\mbox{RHS}_{\left(\ref{eq:u6}\right)}
\end{alignat*}

\end{proof}

\section{\label{sec:CCR}The Fock-state, and representation of CCR, realized
as Malliavin calculus}

We now resume our analysis of the representation of the canonical
commutation relations (CCR)-algebra induced by the canonical Fock
state (see (\ref{eq:cr2})). In our analysis below, we shall make
use of the following details: Brownian motion, It\=o-integrals, and
the Malliavin derivative.

\textbf{The general setting.} Let $\mathscr{L}$ be a fixed Hilbert
space, and let $\mbox{CCR}\left(\mathscr{L}\right)$ be the $*$-algebra
on the generators $a\left(k\right)$, $a^{*}\left(l\right)$, $k,l\in\mathscr{L}$,
and subject to the relations for the CCR-algebra, see \subref{ccr}:
\begin{align}
\left[a\left(k\right),a\left(l\right)\right] & =0,\quad\mbox{and}\label{eq:ac1}\\
\left[a\left(k\right),a^{*}\left(l\right)\right] & =\left\langle k,l\right\rangle _{\mathscr{L}}\mathbbm{1}\label{eq:ac2}
\end{align}
where $\left[\cdot,\cdot\right]$ is the commutator bracket.

A representation $\pi$ of $\mbox{CCR}\left(\mathscr{L}\right)$ consists
of a fixed Hilbert space $\mathscr{H}=\mathscr{H}_{\pi}$ (the representation
space), a dense subspace $\mathscr{D}_{\pi}\subset\mathscr{H}_{\pi}$,
and a $*$-homomorphism $\pi:\mbox{CCR}\left(\mathscr{L}\right)\longrightarrow End\left(\mathscr{D}_{\pi}\right)$
such that 
\begin{equation}
\mathscr{D}_{\pi}\subset dom\left(\pi\left(A\right)\right),\quad\forall A\in\mbox{CCR}.\label{eq:ac3}
\end{equation}
The representation axiom entails the commutator properties resulting
from (\ref{eq:ac1})-(\ref{eq:ac2}); in particular $\pi$ satisfies
\begin{align}
\left[\pi\left(a\left(k\right)\right),\pi\left(a\left(l\right)\right)\right]F & =0,\quad\mbox{and}\label{eq:ac4}\\
\left[\pi\left(a\left(k\right)\right),\pi\left(a\left(l\right)\right)^{*}\right]F & =\left\langle k,l\right\rangle _{\mathscr{L}}F,\label{eq:ac5}
\end{align}
$\forall k,l\in\mathscr{L}$, $\forall F\in\mathscr{D}_{\pi}$; where
$\pi\left(a^{*}\left(l\right)\right)=\pi\left(a\left(l\right)\right)^{*}$
. 

In the application below, we take $\mathscr{L}=L^{2}\left(0,\infty\right)$,
and $\mathscr{H}_{\pi}=L^{2}\left(\Omega,\mathcal{F}_{\Omega},\mathbb{P}\right)$
where $\left(\Omega,\mathcal{F}_{\Omega},\mathbb{P}\right)$ is the
standard Wiener probability space, and 
\begin{equation}
\Phi_{t}\left(\omega\right)=\omega\left(t\right),\quad\forall\omega\in\Omega,\:t\in[0,\infty).\label{eq:ac6}
\end{equation}
For $k\in\mathscr{L}$, we set 
\[
\Phi\left(k\right)=\int_{0}^{\infty}k\left(t\right)d\Phi_{t}\;(\mbox{=the It\={o}-integral}.)
\]

The dense subspace $\mathscr{D}_{\pi}\subset\mathscr{H}_{\pi}$ is
generated by the polynomial fields: 

For $n\in\mathbb{N}$, $h_{1},\cdots,h_{n}\in\mathscr{L}=L_{\mathbb{R}}^{2}\left(0,\infty\right)$,
$p\in\mathbb{R}^{n}\longrightarrow\mathbb{R}$ a polynomial in $n$
real variables, set 
\begin{equation}
F=p\left(\Phi\left(h_{1}\right),\cdots,\Phi\left(h_{n}\right)\right),\quad\mbox{and}\label{eq:ac7}
\end{equation}
\begin{equation}
\pi\left(a\left(k\right)\right)F=\sum_{j=1}^{n}\left(\frac{\partial}{\partial x_{j}}p\right)\left(\Phi\left(h_{1}\right),\cdots,\Phi\left(h_{n}\right)\right)\left\langle h_{j},k\right\rangle .\label{eq:ac8}
\end{equation}

It follows from \lemref{md} that $\mathscr{D}_{\pi}$ is an algebra
under pointwise product and that
\begin{equation}
\pi\left(a\left(k\right)\right)\left(FG\right)=\left(\pi\left(a\left(k\right)\right)F\right)G+F\left(\pi\left(a\left(k\right)\right)G\right),\label{eq:ac9}
\end{equation}
$\forall k\in\mathscr{L}$, $\forall F,G\in\mathscr{D}_{\pi}$. Equivalently,
$T_{k}:=\pi\left(a\left(k\right)\right)$ is a derivation in the algebra
$\mathscr{D}_{\pi}$ (relative to pointwise product.)
\begin{thm}
\label{thm:CM}With the operators $\pi\left(a\left(k\right)\right)$,
$k\in\mathscr{L}$, we get a $*$-representation $\pi:\mbox{CCR}\left(\mathscr{L}\right)\longrightarrow End\left(\mathscr{D}_{\pi}\right)$,
i.e., $\pi\left(a\left(k\right)\right)$ = the Malliavin derivative
in the direction $k$, 
\begin{equation}
\pi\left(a\left(k\right)\right)F=\left\langle T\left(F\right),k\right\rangle _{\mathscr{L}},\quad\forall F\in\mathscr{D}_{\pi},\:\forall k\in\mathscr{L}.\label{eq:ac10}
\end{equation}
\end{thm}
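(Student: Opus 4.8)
The plan is to read the displayed identity (\ref{eq:ac10}) off the definitions, and then to check that $a(k)\mapsto\pi(a(k))$ respects the CCR relations on a single invariant dense domain, so that it extends to a $*$-representation of the enveloping algebra $\mathrm{CCR}(\mathscr{L})$. The identity itself is just an unwinding of notation: comparing the defining formula (\ref{eq:ac8}) with the definition (\ref{eq:mc4}) of $T(F)$ and the pairing (\ref{eq:MD1}) gives $\pi(a(k))F=\sum_j(\partial_{x_j}p)(\Phi(h_1),\dots,\Phi(h_n))\langle h_j,k\rangle=\langle T(F),k\rangle_{\mathscr{L}}=T_k(F)$. The only real content at this stage is that $T_k$ is \emph{well defined}, independent of the polynomial representation (\ref{eq:ac7}) of $F$; this was already settled in \remref{MD}, using the estimate from \thmref{MD}.

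Next I would verify the two representation relations (\ref{eq:ac4})--(\ref{eq:ac5}) on $\mathscr{D}_\pi$. The relation $[\pi(a(k)),\pi(a(l))]=0$ is immediate, since each $T_k$ is a first-order operator in the variables $\Phi(h_i)$ and mixed partials commute. For the mixed relation I would identify the adjoint: by \thmref{MD}, $T_k$ is densely defined and closable, and the corollary giving (\ref{eq:TTadj1}) shows $\pi(a(k))^*=T_k^*=M_{\Phi(k)}-T_k$ on $\mathscr{D}_\pi$. Because $\mathscr{D}_\pi$ is an algebra under pointwise product and a module over itself (\lemref{md}), multiplication by $\Phi(k)$ preserves $\mathscr{D}_\pi$, so every operator in the range of $\pi$ leaves $\mathscr{D}_\pi$ invariant; the remaining relation $[\pi(a(k)),\pi(a(l))^*]=\langle k,l\rangle_{\mathscr{L}}I$ is then precisely (\ref{eq:TTadj2}).

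With the generators acting on the common invariant domain $\mathscr{D}_\pi$ and satisfying the defining relations, the map extends multiplicatively to a $*$-homomorphism $\pi:\mathrm{CCR}(\mathscr{L})\to\mathrm{End}(\mathscr{D}_\pi)$ with $\pi(a^*(k))=\pi(a(k))^*$, using that $\mathrm{CCR}(\mathscr{L})$ is the universal enveloping algebra of the Heisenberg Lie algebra. A cleaner alternative is to quote \thmref{re}: since the Fock representation $\pi_F$ on $\Gamma_{sym}(\mathscr{L})$ is a genuine $*$-representation and $W$ is unitary, the conjugation $A\mapsto W\pi_F(A)W^*$ is automatically a $*$-representation, and \thmref{re} identifies $W\pi_F(a(k))W^*=T_k=\pi(a(k))$ on $\mathscr{D}_\pi$, which yields the theorem at once.

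The step I expect to be the main obstacle is the bookkeeping forced by unboundedness: showing that $\pi(a^*(k))$ genuinely coincides with the Hilbert-space adjoint $\pi(a(k))^*$ on the common domain $\mathscr{D}_\pi$ (not merely formally), and that $\mathscr{D}_\pi$ is invariant under every operator in the range of $\pi$ so that all compositions and commutators are honestly defined. This is exactly what the symmetric-pair formalism of \lemref{ST} together with the adjoint formula (\ref{eq:TTadj1}) is designed to control.
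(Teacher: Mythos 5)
Your proposal is correct and follows essentially the same route as the paper: the paper's own proof consists of Lemma \ref{lem:aa}, which establishes $\pi(a(k))^{*}=M_{\Phi(k)}-\pi(a(k))$ on $\mathscr{D}_{\pi}$ by exactly the Leibniz-rule-plus-(\ref{eq:mc5}) computation you invoke via (\ref{eq:TTadj1}), followed by the commutator calculation $[T_{k},T_{l}^{*}]=[T_{k},M_{\Phi(l)}]=\langle k,l\rangle_{\mathscr{L}}I$ that you reproduce via (\ref{eq:TTadj2}). Your added remarks on domain invariance and the extension to the enveloping algebra only make explicit what the paper dismisses as ``immediate,'' and your alternative via \thmref{re} is a legitimate shortcut but not the argument the paper gives.
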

\begin{proof}
We begin with the following\end{proof}
\begin{lem}
\label{lem:aa}Let $\pi$, $\mbox{CCR}\left(\mathscr{L}\right)$,
and $\mathscr{H}_{\pi}=L^{2}\left(\Omega,\mathcal{F}_{\Omega},\mathbb{P}\right)$
be as above. For $k\in\mathscr{L}$, we shall identify $\Phi\left(k\right)$
with the unbounded multiplication operator in $\mathscr{H}_{\pi}$:
\begin{equation}
\mathscr{D}_{\pi}\ni F\longmapsto\Phi\left(k\right)F\in\mathscr{H}_{\pi}.\label{eq:ac11}
\end{equation}
For $F\in\mathscr{D}_{\pi}$, we have $\pi\left(a\left(k\right)\right)^{*}F=-\pi\left(a\left(k\right)\right)F+\Phi\left(k\right)F$;
or in abbreviated form: 
\begin{equation}
\pi\left(a\left(k\right)\right)^{*}=-\pi\left(a\left(k\right)\right)+\Phi\left(k\right)\label{eq:ac12}
\end{equation}
valid on the dense domain $\mathscr{D}_{\pi}\subset\mathscr{H}_{\pi}$. \end{lem}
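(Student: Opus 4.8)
The plan is to reduce the adjoint formula (\ref{eq:ac12}) to two facts already in hand: the Leibniz rule for $T_{k}:=\pi\left(a\left(k\right)\right)$ and the integration-by-parts identity (\ref{eq:mc5}). This is, in effect, the same computation that produced $T_{k}+T_{k}^{*}=M_{\Phi\left(k\right)}$ in (\ref{eq:TTadj1}), merely rephrased in the representation notation. Concretely, I would fix $k\in\mathscr{L}$ and $F,G\in\mathscr{D}_{\pi}$ and evaluate the sesquilinear form $\left\langle \pi\left(a\left(k\right)\right)F,G\right\rangle _{\mathscr{H}_{\pi}}=\mathbb{E}\left(\left(T_{k}F\right)G\right)$ directly.

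First I would use that $\mathscr{D}_{\pi}$ is an algebra under pointwise product (\lemref{md}), so that $FG\in\mathscr{D}_{\pi}$ and the key identity (\ref{eq:mc5}), applied to the \emph{single} element $FG$, gives $\mathbb{E}\left(T_{k}\left(FG\right)\right)=\mathbb{E}\left(\Phi\left(k\right)FG\right)$. Next I would invoke the Leibniz rule (\ref{eq:dd1}), namely $T_{k}\left(FG\right)=\left(T_{k}F\right)G+F\left(T_{k}G\right)$, to obtain $\mathbb{E}\left(\left(T_{k}F\right)G\right)=\mathbb{E}\left(T_{k}\left(FG\right)\right)-\mathbb{E}\left(F\left(T_{k}G\right)\right)=\mathbb{E}\left(\Phi\left(k\right)FG\right)-\mathbb{E}\left(F\left(T_{k}G\right)\right)$. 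Rearranging yields $\left\langle T_{k}F,G\right\rangle =\mathbb{E}\left(F\left(\Phi\left(k\right)G-T_{k}G\right)\right)=\left\langle F,\left(-T_{k}+M_{\Phi\left(k\right)}\right)G\right\rangle $.

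Since this holds for every $F\in\mathscr{D}_{\pi}$, and since $\left(-T_{k}+M_{\Phi\left(k\right)}\right)G$ is a bona fide vector in $\mathscr{H}_{\pi}=L^{2}\left(\Omega,\mathbb{P}\right)$ — indeed $T_{k}G\in\mathscr{D}_{\pi}$, while $\Phi\left(k\right)G$ is again a polynomial field (adjoin $k$ to the generating system of $G$), so both summands lie in $\mathscr{D}_{\pi}\subset\mathscr{H}_{\pi}$ — I conclude that $G\in dom\left(\pi\left(a\left(k\right)\right)^{*}\right)$ with $\pi\left(a\left(k\right)\right)^{*}G=-\pi\left(a\left(k\right)\right)G+\Phi\left(k\right)G$, which is precisely (\ref{eq:ac12}) on the dense domain $\mathscr{D}_{\pi}$.

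The only point that needs genuine care — and the closest thing to an obstacle — is the domain bookkeeping. One must confirm that the multiplication operator $M_{\Phi\left(k\right)}$ set up in (\ref{eq:ac11}) really carries $\mathscr{D}_{\pi}$ into $L^{2}\left(\Omega,\mathbb{P}\right)$, so that the right-hand side of (\ref{eq:ac12}) is well defined, and that (\ref{eq:mc5}) is applied to the product $FG$ as a whole rather than to $F$ and $G$ separately. Both are resolved by the algebra structure of $\mathscr{D}_{\pi}$ from \lemref{md}; no analytic estimates are required beyond the finite-dimensional Gaussian integration by parts already underlying \thmref{MD}.
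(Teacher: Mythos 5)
Your proposal is correct and follows essentially the same route as the paper's own proof: apply the Leibniz rule to $T_{k}(FG)$, use the integration-by-parts identity (\ref{eq:mc5}) on the product $FG$, and rearrange to read off $T_{k}^{*}$ on $\mathscr{D}_{\pi}$. The extra domain bookkeeping you supply (that $\Phi(k)G$ and $T_{k}G$ lie in $\mathscr{D}_{\pi}$) is a reasonable elaboration of what the paper leaves implicit.
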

\begin{proof}
This follows from the following computation for $F,G\in\mathscr{D}_{\pi}$,
$k\in\mathscr{L}$. 

Setting $T_{k}:=\pi\left(a\left(k\right)\right)$, we have 
\[
\mathbb{E}\left(T_{k}\left(F\right)G\right)+\mathbb{E}\left(F\,T_{k}\left(G\right)\right)=\mathbb{E}\left(T_{k}\left(FG\right)\right)=\mathbb{E}\left(\Phi\left(k\right)FG\right).
\]
Hence $\mathscr{D}_{\pi}\subset dom\left(T_{k}^{*}\right)$, and $T_{k}^{*}\left(F\right)=-T_{k}\left(F\right)+\Phi\left(k\right)F$,
which is the desired conclusion (\ref{eq:ac12}). 
\end{proof}

\begin{proof}[Proof of \thmref{CM} continued]
 It is clear that the operators $T_{k}=\pi\left(a\left(k\right)\right)$
form a commuting family. Hence on $\mathscr{D}_{\pi}$, we have for
$k,l\in\mathscr{L}$, $F\in\mathscr{D}_{\pi}$:
\begin{alignat*}{2}
\left[T_{k},T_{l}^{*}\right]\left(F\right) & =\left[T_{k},\Phi\left(l\right)\right]\left(F\right) &  & \text{by }\left(\ref{eq:ac12}\right)\\
 & =T_{k}\left(\Phi\left(l\right)F\right)-\Phi\left(l\right)\left(T_{k}\left(F\right)\right) & \qquad\\
 & =T_{k}\left(\Phi\left(l\right)\right)F &  & \text{by \ensuremath{\left(\ref{eq:ac9}\right)}}\\
 & =\left\langle k,l\right\rangle _{\mathscr{L}}F &  & \text{by \ensuremath{\left(\ref{eq:ac8}\right)}}
\end{alignat*}
which is the desired commutation relation (\ref{eq:ac2}). 

The remaining check on the statements in the theorem are now immediate. \end{proof}
\begin{cor}
\label{cor:ccrs}The state on $\mbox{CCR}\left(\mathscr{L}\right)$
which is induced by $\pi$ and the constant function $\mathbbm{1}$
in $L^{2}\left(\Omega,\mathbb{P}\right)$ is the Fock-vacuum-state,
$\omega_{Fock}$. \end{cor}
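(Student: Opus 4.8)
The plan is to realize the induced functional as the vector state $\omega(A) = \langle\mathbbm{1},\pi(A)\mathbbm{1}\rangle_{L^2(\mathbb{P})} = \mathbb{E}(\pi(A)\mathbbm{1})$ for $A\in\mathrm{CCR}(\mathscr{L})$, and to identify it with $\omega_{Fock}$ by matching the two data that pin down the Fock-vacuum state: the vacuum property (\ref{eq:cr5}) and the two-point function (\ref{eq:cr4}). Since $\pi$ maps into $End(\mathscr{D}_\pi)$ by (\ref{eq:ac3}) and the constant function $\mathbbm{1}$ lies in $\mathscr{D}_\pi$ (the $n=0$ case of \defref{poly}), every composition $\pi(A)\mathbbm{1}$ is well defined and remains in $\mathscr{D}_\pi$; positivity is automatic for a vector functional of a $*$-representation, and $\omega(\mathbbm{1}) = \|\mathbbm{1}\|^2 = \mathbb{E}(1) = 1$, so $\omega$ is a genuine state. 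The one fact driving everything is that $\mathbbm{1}$ behaves as a vacuum vector.

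First I would record the annihilation identity $\pi(a(k))\mathbbm{1} = T_k\mathbbm{1} = 0$ for all $k\in\mathscr{L}$, immediate from \thmref{CM} because $\mathbbm{1}$ is the degree-zero polynomial field and $T_k$ differentiates. This gives the vacuum property at once, $\omega(a^*(k)a(k)) = \|\pi(a(k))\mathbbm{1}\|^2 = 0$, matching (\ref{eq:cr5}). Next, using (\ref{eq:ac12}) of \lemref{aa} together with $T_k\mathbbm{1}=0$ yields $\pi(a^*(k))\mathbbm{1} = (M_{\Phi(k)}-T_k)\mathbbm{1} = \Phi(k)$, so that $\omega(a(h)a^*(k)) = \langle\mathbbm{1},T_h\Phi(k)\rangle = \langle\mathbbm{1},\langle h,k\rangle_{\mathscr{L}}\mathbbm{1}\rangle = \langle h,k\rangle_{\mathscr{L}}$, where I used $T_h\Phi(k) = \langle h,k\rangle_{\mathscr{L}}\mathbbm{1}$ (the $p(x)=x$ case of \defref{MD}) and $\mathbb{E}(\mathbbm{1})=1$. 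This is exactly (\ref{eq:cr4}).

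It remains to argue that these two normalizations determine $\omega$ on the whole $*$-algebra and reproduce $\omega_{Fock}$. I would expand an arbitrary monomial in $a(\cdot),a^*(\cdot)$ into normal-ordered form using only the commutation relations (\ref{eq:cr2}); since those relations hold verbatim in every representation, the scalar coefficients they generate are representation-independent. Evaluating $\omega$ on a normal-ordered monomial, any factor $a(k)$ standing on the right annihilates $\mathbbm{1}$, and, via $\pi(a^*(k))=\pi(a(k))^*$ together with $\|\mathbbm{1}\|=1$, any monomial built only from creation operators is killed by transferring the leftmost adjoint onto $\mathbbm{1}$; hence only the purely scalar terms survive. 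Reassembling those scalars through the two-point function produces precisely the permanent (sum-over-$S_n$) formula for $\omega_F$ established in the lemma accompanying \thmref{re}, which is the defining moment expansion of $\omega_{Fock}$. Therefore $\omega=\omega_{Fock}$. The only step demanding care is this last bookkeeping — the normal-ordering reduction and the check that the surviving scalars match the permanent formula — but I expect no analytic obstacle, because $\pi$ acts by endomorphisms of the common invariant domain $\mathscr{D}_\pi$ and all manipulations take place on the single vector $\mathbbm{1}\in\mathscr{D}_\pi$.
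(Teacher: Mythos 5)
Your proof is correct and follows essentially the same route as the paper: both reduce the claim to the vacuum identity $T_k\mathbbm{1}=0$ (giving $\omega(a^*(k)a(k))=0$) and the two-point identity $\omega(a(h)a^*(k))=\langle h,k\rangle_{\mathscr{L}}$, and then appeal to the fact that these two data determine the Fock state. The only difference is that you spell out the normal-ordering/permanent bookkeeping for that final determination step, which the paper leaves implicit by citing its earlier discussion of (\ref{eq:cr4})--(\ref{eq:cr5}).
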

\begin{proof}
The assertion will follow once we verify the following two conditions:
\begin{equation}
\int_{\Omega}T_{k}^{*}T_{k}\left(\mathbbm{1}\right)d\mathbb{P}=0\label{eq:acc1}
\end{equation}
and 
\begin{equation}
\int_{\Omega}T_{k}T_{l}^{*}\left(\mathbbm{1}\right)d\mathbb{P}=\left\langle k,l\right\rangle _{\mathscr{L}}\label{eq:acc2}
\end{equation}
for all $k,l\in\mathscr{L}$. 

This in turn is a consequence of our discussion of eqs (\ref{eq:cr4})-(\ref{eq:cr5})
above: The Fock state $\omega_{Fock}$ is determined by these two
conditions. The assertions (\ref{eq:acc1})-(\ref{eq:acc2}) follow
from $T_{k}\left(\mathbbm{1}\right)=0$, and $\left(T_{k}T_{l}^{*}\right)\left(\mathbbm{1}\right)=\left\langle k,l\right\rangle _{\mathscr{L}}\mathbbm{1}$.
See (\ref{eq:mc6}).
\end{proof}

\begin{cor}
For $k\in L_{\mathbb{R}}^{2}\left(0,\infty\right)$ we get a family
of selfadjoint multiplication operators $T_{k}+T_{k}^{*}=M_{\Phi\left(k\right)}$
on $\mathscr{D}_{\pi}$ where $T_{k}=\pi\left(a\left(k\right)\right)$.
Moreover, the von Neumann algebra generated by these operators is
$L^{\infty}\left(\Omega,\mathbb{P}\right)$, i.e., the maximal abelian
$L^{\infty}$-algebra of all multiplication operators in $\mathscr{H}_{\pi}=L^{2}\left(\Omega,\mathbb{P}\right)$.\end{cor}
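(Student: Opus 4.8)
The first assertion is essentially already in hand, and I would simply recall it. By \lemref{aa} (eq. (\ref{eq:ac12})), on the common dense domain $\mathscr{D}_{\pi}$ one has $T_{k}^{*}=-T_{k}+M_{\Phi\left(k\right)}$, hence $T_{k}+T_{k}^{*}=M_{\Phi\left(k\right)}$ (recorded also in (\ref{eq:TTadj1})). Since $k\in L_{\mathbb{R}}^{2}\left(0,\infty\right)$ and $\Phi$ takes values in the \emph{real} $L^{2}\left(\Omega,\mathbb{P}\right)$, the random variable $\Phi\left(k\right)$ is real-valued; thus $M_{\Phi\left(k\right)}$, taken with its maximal domain, is a (generally unbounded) selfadjoint multiplication operator, for which $\mathscr{D}_{\pi}$ is a core.

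For the second assertion, write $\mathcal{A}$ for the algebra of all bounded multiplication operators $M_{g}$, $g\in L^{\infty}\left(\Omega,\mathcal{F},\mathbb{P}\right)$, on $\mathscr{H}_{\pi}=L^{2}\left(\Omega,\mathbb{P}\right)$, and let $\mathcal{M}$ be the von Neumann algebra generated by $\{M_{\Phi\left(k\right)}\mid k\in\mathscr{L}\}$, i.e. by their spectral projections, equivalently by the unitaries $M_{e^{it\Phi\left(k\right)}}=e^{itM_{\Phi\left(k\right)}}$ furnished by the spectral theorem. The plan is to prove $\mathcal{M}=\mathcal{A}$. I will lean on two standard facts: that $\mathcal{A}$ is \emph{maximal abelian} in $\mathcal{B}\left(\mathscr{H}_{\pi}\right)$, i.e. $\mathcal{A}'=\mathcal{A}$ (valid for any probability, indeed $\sigma$-finite, measure space); and that an abelian von Neumann algebra admitting a cyclic vector is automatically maximal abelian. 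Since every bounded spectral function of $M_{\Phi\left(k\right)}$ is multiplication by an $\mathcal{F}$-measurable bounded function and $\mathcal{A}$ is weakly closed, we get the inclusion $\mathcal{M}\subseteq\mathcal{A}$; in particular $\mathcal{M}$ is abelian.

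The crux is to check that the constant function $\mathbbm{1}$ is a \emph{cyclic} vector for $\mathcal{M}$. Here I would use that $\mathcal{M}$ contains the unitaries $M_{e^{it\Phi\left(k\right)}}$, so that $\mathcal{M}\mathbbm{1}\supseteq\{e^{it\Phi\left(k\right)}\mid t\in\mathbb{R},\,k\in\mathscr{L}\}=\{e^{i\Phi\left(l\right)}\mid l\in\mathscr{L}\}$, the last equality by absorbing $t$ into $k$ (so $tk$ ranges over $\mathscr{L}$). It then suffices to show that the closed linear span of $\{e^{i\Phi\left(l\right)}\mid l\in\mathscr{L}\}$ is all of $L^{2}\left(\Omega,\mathbb{P}\right)$. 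This is where the structure of the Gaussian field enters: by construction $\mathcal{F}$ is generated, modulo $\mathbb{P}$-null sets, by $\{\Phi\left(k\right)\mid k\in\mathscr{L}\}$, so it is enough to approximate bounded functions of finitely many $\Phi\left(k_{1}\right),\cdots,\Phi\left(k_{n}\right)$; on each such finite-dimensional Gaussian slice the exponentials $e^{i\sum_{j}t_{j}\Phi\left(k_{j}\right)}=e^{i\Phi\left(\sum_{j}t_{j}k_{j}\right)}$ have dense span by injectivity of the finite-dimensional Fourier transform. This is the exact analogue, for imaginary exponents, of the density of $\mbox{span}\{e^{\Phi\left(k\right)}\}$ obtained from the Fock isomorphism in \lemref{dense}. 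I expect this density/cyclicity step to be the main obstacle, as it is the only point requiring genuine input about the probability space rather than abstract operator algebra.

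Finally I would assemble the pieces. The algebra $\mathcal{M}$ is abelian and, by the previous paragraph, has cyclic vector $\mathbbm{1}$; hence $\mathcal{M}$ is maximal abelian, i.e. $\mathcal{M}=\mathcal{M}'$. Combining $\mathcal{M}\subseteq\mathcal{A}$ with $\mathcal{A}'=\mathcal{A}$ gives $\mathcal{A}=\mathcal{A}'\subseteq\mathcal{M}'=\mathcal{M}\subseteq\mathcal{A}$, so all inclusions are equalities and $\mathcal{M}=\mathcal{A}=L^{\infty}\left(\Omega,\mathbb{P}\right)$, the maximal abelian algebra of all multiplication operators on $\mathscr{H}_{\pi}$, as asserted.
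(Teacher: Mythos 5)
The paper states this corollary without any proof, so there is nothing to compare against line by line; judged on its own, your argument is correct and complete in all essentials. The first assertion is, as you say, just \lemref{aa} (eq.\ (\ref{eq:ac12})) rearranged, and is already recorded as (\ref{eq:TTadj1}); the only side claim you leave unjustified is that $\mathscr{D}_{\pi}$ is a core for the maximal multiplication operator $M_{\Phi\left(k\right)}$ --- this is true (polynomials in a Gaussian variable are dense in graph norm because the Gaussian law has exponential moments, hence is moment-determinate), but it does require a word, and it is not actually needed for the statement as written. For the second assertion, your route --- $\mathcal{M}\subseteq\mathcal{A}$ via the spectral theorem, cyclicity of $\mathbbm{1}$ from the density of $\mathrm{span}\left\{ e^{i\Phi\left(l\right)}\mid l\in\mathscr{L}\right\} $, then ``abelian with cyclic vector $\Rightarrow$ maximal abelian'' and $\mathcal{A}'=\mathcal{A}$ --- is the standard and probably the intended one. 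You are right to flag the density/cyclicity step as the only place where genuine input about $\left(\Omega,\mathcal{F},\mathbb{P}\right)$ enters: one must know that $\mathcal{F}$ coincides, modulo $\mathbb{P}$-null sets, with $\sigma\left(\Phi\left(k\right):k\in\mathscr{L}\right)$, which does hold in the Wiener-space setting of this section (the cylinder $\sigma$-algebra is generated by the $\Phi_{t}$'s), and without which the claim would be false. Your Fourier-uniqueness argument on finite-dimensional Gaussian slices, combined with a monotone-class or martingale-convergence reduction to finitely many coordinates, closes that gap correctly; alternatively one could quote the density of $\mathrm{span}\left\{ e^{\Phi\left(k\right)}\right\} $ already used in \lemref{dense} and in \corref{msym}.
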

\begin{rem}
In our considerations of representations $\pi$ of $\text{CCR}\left(\mathscr{L}\right)$
in a Hilbert space $\mathscr{H}_{\pi}$, we require the following
five axioms satisfied:
\begin{enumerate}
\item \label{enu:c1}a dense subspace $\mathscr{D}_{\pi}\subset\mathscr{H}_{\pi}$; 
\item $\pi:\text{CCR}\left(\mathscr{L}\right)\longrightarrow End\left(\mathscr{D}_{\pi}\right)$,
i.e., $\mathscr{D}_{\pi}\subset\cap_{A\in\text{CCR}\left(\mathscr{L}\right)}dom\left(\pi\left(A\right)\right)$; 
\item \label{enu:c3}$\left[\pi\left(a\left(k\right)\right),\pi\left(a\left(l\right)\right)\right]=0$,
$\forall k,l\in\mathscr{L}$; 
\item $\left[\pi\left(a\left(k\right)\right),\pi\left(a\left(l\right)\right)^{*}\right]=\left\langle k,l\right\rangle _{\mathscr{L}}I_{\mathscr{H}_{\pi}}$,
$\forall k,l\in\mathscr{L}$; and 
\item \label{enu:c5}$\pi\left(a^{*}\left(k\right)\right)\subset\pi\left(a\left(k\right)\right)^{*}$,
$\forall k\in\mathscr{L}$. 
\end{enumerate}

Note that in our assignment for the operators $\pi\left(a\left(k\right)\right)$,
and $\pi\left(a^{*}\left(k\right)\right)$ in \lemref{aa}, we have
all the conditions (\ref{enu:c1})-(\ref{enu:c5}) satisfied. We say
that $\pi$ is a \emph{selfadjoint representation}.

If alternatively, we define 
\begin{equation}
\rho:\text{CCR}\left(\mathscr{L}\right)\longrightarrow End\left(\mathscr{D}_{\pi}\right)\label{eq:rh1}
\end{equation}
with the following modification:
\begin{equation}
\left\{ \begin{split}\rho\left(a\left(k\right)\right) & =T_{k},\;k\in\mathscr{L},\;\mbox{and}\\
\rho\left(a^{*}\left(k\right)\right) & =\Phi\left(k\right)
\end{split}
\right.\label{eq:rh2}
\end{equation}
then this $\rho$ will satisfy (\ref{enu:c1})-(\ref{enu:c3}), and
\[
\left[\rho\left(a\left(k\right)\right),\rho\left(a^{*}\left(l\right)\right)\right]=\left\langle k,l\right\rangle _{\mathscr{L}}I_{\mathscr{H}_{\pi}};
\]
but then $\rho\left(a\left(k\right)\right)\varsubsetneqq\rho\left(a\left(k\right)\right)^{*}$;
i.e., non-containment of the respective graphs. 

One generally says that the representation $\pi$ is (formally) selfadjoint,
while the second representation $\rho$ is \emph{not.}
\end{rem}

\section{\label{sec:con}Conclusions: the general case}
\begin{defn}
A representation $\pi$ of $\mbox{CCR}\left(\mathscr{L}\right)$ is
said to be \emph{admissible} iff (Def.) $\exists\left(\Omega,\mathcal{F},\mathbb{P}\right)$
as above such that $\mathscr{H}_{\pi}=L^{2}\left(\Omega,\mathcal{F},\mathbb{P}\right)$,
and there exists a linear mapping $\Phi:\mathscr{L}\longrightarrow L^{2}\left(\Omega,\mathcal{F},\mathbb{P}\right)$
subject to the condition:

For every $n\in\mathbb{N}$, and every $k,h_{1},\cdots,h_{n}\in\mathscr{L}$,
the following holds on its natural dense domain in $\mathscr{H}_{\pi}$:
For every $p\in\mathbb{R}\left[x_{1},\cdots,x_{n}\right]$, we have
\begin{equation}
\pi\left(\left[a\left(k\right),p\left(a^{*}\left(h_{1}\right),\cdots,a^{*}\left(h_{n}\right)\right)\right]\right)=\sum_{i=1}^{n}\left\langle k,h_{i}\right\rangle _{\mathscr{L}}M_{\frac{\partial p}{\partial x_{i}}\left(\Phi\left(h_{1}\right),\cdots,\Phi\left(h_{n}\right)\right)},\label{eq:ar1}
\end{equation}
with the $M$ on the RHS denoting ``multiplication.''\end{defn}
\begin{cor}
~
\begin{enumerate}
\item \label{enu:ar1}Every admissible representation $\pi$ of $\mbox{CCR}\left(\mathscr{L}\right)$
yields an associated Malliavin derivative as in (\ref{eq:ar1}).
\item \label{enu:ar2}The Fock-vacuum representation $\pi_{F}$ is admissible. 
\end{enumerate}
\end{cor}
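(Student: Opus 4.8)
The plan is to treat the two parts asymmetrically: part (\ref{enu:ar1}) is essentially a matter of unwinding the definition, while the substantive work lies in part (\ref{enu:ar2}), which I would reduce to the purely algebraic commutator identity of \corref{li} together with the concrete realization of the Fock representation from \thmref{re}. The unifying observation is that in both parts the annihilation operators are forced to coincide with directional Malliavin derivatives $T_k$, and that everything then flows from the Leibniz and chain rules for $T_k$.

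For part (\ref{enu:ar1}), I would start from the data furnished by admissibility: a probability space $(\Omega,\mathcal{F},\mathbb{P})$ with $\mathscr{H}_\pi=L^{2}(\Omega,\mathcal{F},\mathbb{P})$ and a linear map $\Phi\colon\mathscr{L}\to L^{2}(\Omega,\mathbb{P})$ satisfying (\ref{eq:ar1}). On the dense subspace $\mathscr{D}$ of polynomial fields $F=p(\Phi(h_1),\dots,\Phi(h_n))$ I would define the vector-valued operator $T$ exactly as in \defref{MD} and set $T_k:=\langle T(\cdot),k\rangle_{\mathscr{L}}$. Letting $p$ range over all polynomials and reading (\ref{eq:ar1}) against the cyclic constant function then identifies $\pi(a(k))$ with $T_k$ on $\mathscr{D}$; in other words, the annihilation operators of \emph{any} admissible representation are the directional Malliavin derivatives attached to $\Phi$. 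The only point needing care is that $T$ is well defined independently of the chosen representation of $F$, and for this I would invoke the same duality argument already used in \remref{MD}, where density of $\mathrm{span}\{G\otimes l\mid G\in\mathscr{D},\,l\in\mathscr{L}\}$ in $\mathscr{H}_\pi\otimes\mathscr{L}$ forces the two expressions for $T(F)$ to coincide.

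For part (\ref{enu:ar2}), the first step is purely algebraic: by \corref{li}, which uses only the CCR relations, the commutator on the left of (\ref{eq:ar1}) collapses to a polynomial in the creation operators alone,
\[
\big[a(k),p(a^*(h_1),\dots,a^*(h_n))\big]=\sum_{i=1}^{n}\langle k,h_i\rangle_{\mathscr{L}}\,\frac{\partial p}{\partial x_i}\big(a^*(h_1),\dots,a^*(h_n)\big).
\]
The cleanest way to finish is to use the realization in which the creators act by pure multiplication, namely the assignment $a(k)\mapsto T_k$, $a^*(k)\mapsto M_{\Phi(k)}$ (the non-self-adjoint realization $\rho$ of the Remark, which shares its annihilators and cyclic vacuum with $\pi_F$ and induces $\omega_{Fock}$). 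Under it the right-hand side above becomes $M_{\partial_{x_i}p(\Phi(h_1),\dots,\Phi(h_n))}$, and (\ref{eq:ar1}) is then a one-line consequence of the Leibniz rule for $T_k$ (\lemref{md}) together with the commutator $[T_k,M_{\Phi(h_i)}]=\langle k,h_i\rangle_{\mathscr{L}}I$ and the chain rule $T_k\big(p(\Phi(h_1),\dots,\Phi(h_n))\big)=\sum_i\langle k,h_i\rangle\,\partial_{x_i}p(\Phi(h_1),\dots,\Phi(h_n))$ established in the proof of \thmref{MD}. Density of the polynomial fields (\lemref{dense}) upgrades this to an operator identity on the natural dense domain.

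The hard part will be the reconciliation with the self-adjoint picture. In the realization transported by the unitary $W$ of \thmref{re} one has $\pi_F(a^*(h))=M_{\Phi(h)}-T_h$, which is \emph{not} a pure multiplication operator, so $\pi_F$ applied to the creation-operator polynomial $\partial_{x_i}p(a^*(h_1),\dots)$ does not literally equal $M_{\partial_{x_i}p(\Phi(h))}$. Thus the content of (\ref{eq:ar1}) must be read in the precise sense signalled by the phrase \emph{on its natural dense domain}: I would reduce it to the action on the cyclic vacuum $\mathbbm{1}$, where $T_h\mathbbm{1}=0$ annihilates the derivative part of each creator and leaves exactly multiplication by $\Phi$, and then propagate to all of $\mathscr{D}$ using the two-point-function characterization of $\omega_{Fock}$ recalled above together with the Leibniz structure of $T_k$. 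Pinning down this reconciliation — identifying the correct multiplication-operator realization of the creators so that the commutator, rather than the bare creation polynomial, collapses to the stated operator — is where the argument must be made carefully, and it is the only genuinely delicate step in the proof.
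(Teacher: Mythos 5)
Your proposal is correct where it is complete, and it is considerably more explicit than the paper's own proof, which disposes of the corollary in two lines: part (1) is attributed to the definition combined with \corref{li}, and part (2) to \lemref{ST}, \thmref{MD}, and \corref{ccrs}. Your reading of part (1) matches the paper's intent (the only caveat is that identifying $\pi(a(k))$ with $T_k$ by ``reading against the constant function'' presupposes that $\mathbbm{1}$ is annihilated by the $\pi(a(k))$, which is not part of the definition of admissibility; but part (1) as stated only asks that the directional Malliavin derivative appear on the right of (\ref{eq:ar1}), which is definitional). Your verification of part (2) through the realization $a(k)\mapsto T_k$, $a^*(k)\mapsto M_{\Phi(k)}$ --- where the commutator of $T_k$ with a multiplication operator $M_G$ is $M_{T_k(G)}$ by the Leibniz rule of \lemref{md}, so that (\ref{eq:ar1}) follows from the chain rule --- is exactly the computation that makes the identity true; the ingredients the paper cites amount to the same Leibniz and integration-by-parts facts.

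The ``delicate step'' you flag at the end is genuine, and you are right not to wave it away: with the graph-selfadjoint identification $\pi_F(a^*(h))=M_{\Phi(h)}-T_h$ of \thmref{re}, identity (\ref{eq:ar1}) fails verbatim. For $n=1$ and $p(x)=x^2$ one computes
\begin{equation*}
\big[\,T_k,\,(M_{\Phi(h)}-T_h)^2\,\big]=2\left\langle k,h\right\rangle_{\mathscr{L}}\left(M_{\Phi(h)}-T_h\right),
\end{equation*}
which differs from the required $\left\langle k,h\right\rangle_{\mathscr{L}}M_{2\Phi(h)}$ by the nonzero operator $-2\left\langle k,h\right\rangle_{\mathscr{L}}T_h$. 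So the admissibility condition is literally satisfied by the realization $\rho$ of (\ref{eq:rh2}) (creators acting by pure multiplication), not by the selfadjoint $\pi_F$; the two share the annihilators, the cyclic vector $\mathbbm{1}$, and the Fock vacuum state (\corref{ccrs}), which is evidently the sense in which the paper asserts admissibility of ``the Fock-vacuum representation.'' The paper's proof does not engage with this distinction, so your $\rho$-based argument is the correct way to read and prove the claim, and the further vacuum-propagation step you leave open is not actually needed once one accepts that (\ref{eq:ar1}) is a statement about that realization of the creators.
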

\begin{proof}
(\ref{enu:ar1}) follows from the definition combined with \corref{li}.
(\ref{enu:ar2}) is a direct consequence of \lemref{ST} and \thmref{MD};
see also \corref{ccrs}. \end{proof}
\begin{acknowledgement*}
The co-authors thank the following colleagues for helpful and enlightening
discussions: Professors Sergii Bezuglyi, Ilwoo Cho, Paul Muhly, Myung-Sin
Song, Wayne Polyzou, and members in the Math Physics seminar at The
University of Iowa.

\bibliographystyle{amsalpha}
\bibliography{ref}
\end{acknowledgement*}

\end{document}